\newtheorem{thm}{Theorem}[section]
\newtheorem{lem}[thm]{Lemma}
\newtheorem{cor}[thm]{Corollary}
\newtheorem{prop}[thm]{Proposition}
\theoremstyle{definition} 
\newtheorem{defn}[thm]{Definition}
\newtheorem{ex}[thm]{Example}
\newtheorem{rem}[thm]{Remark}
\numberwithin{equation}{section}
\renewcommand{\theenumi}{\roman{enumi}}
\newcommand{\secref}[1]{Section~\textup{\ref{#1}}}
\newcommand{\thmref}[1]{Theorem~\textup{\ref{#1}}}
\newcommand{\corref}[1]{Corollary~\textup{\ref{#1}}}
\newcommand{\lemref}[1]{Lemma~\textup{\ref{#1}}}
\newcommand{\propref}[1]{Proposition~\textup{\ref{#1}}}
\newcommand{\defnref}[1]{Definition~\textup{\ref{#1}}}
\newcommand{\exref}[1]{Example~\textup{\ref{#1}}}
\newcommand{\KK}{\mathcal K}
\newcommand{\OO}{\mathcal O}
\newcommand{\TT}{\mathcal T}
\newcommand{\CC}{\mathcal C}
\newcommand{\DD}{\mathcal D}
\newcommand{\EE}{\mathcal E}
\newcommand{\Z}{\mathbb Z}
\newcommand{\C}{\mathbb C}
\renewcommand{\bar}{\overline}
\newcommand{\inv}{^{-1}}
\newcommand{\<}{\langle}
\renewcommand{\>}{\rangle}
\newcommand{\Chi}{\raisebox{2pt}{\ensuremath{\chi}}}
\renewcommand{\epsilon}{\varepsilon}
\newcommand{\case}{& \text{if }}
\newcommand{\minus}{\setminus}
\renewcommand{\:}{\colon}
\renewcommand{\subset}{\subseteq}
\renewcommand{\)}{\textup)}
\DeclareMathOperator*{\spn}{span}
\DeclareMathOperator*{\clspn}{\overline{\spn}}
\newcommand{\arabicnum}{\renewcommand{\theenumi}{\arabic{enumi}}}
\newcommand{\exelist}{\arabicnum\renewcommand{\labelenumi}{(\textbf{\exelabel} \theenumi)}}
\newcommand{\exelabel}{\textbf{Exe}}
\newcommand{\exe}[1]{\textup{(\exelabel\ {\ref{#1}})}}
\newcommand{\eplabel}{\textbf{EP}}
\newcommand{\eplistprime}\arabicnum{\renewcommand{\labelenumi}{(\textbf{\eplabel} \theenumi')}}
\newcommand{\midtext}[1]{\quad\text{#1}\quad}
\newcommand{\righttext}[1]{\quad\text{#1 }}
\newcommand{\yphi}{Y^\varphi}
\newcommand{\oy}{\OO_{\yphi}}
\newcommand{\talg}[1]{\TT(#1)}
\newcommand{\oalg}[1]{\OO(#1)}
\newcommand{\tsp}[1]{\talg #1}
\newcommand{\osp}[1]{\oalg #1}
\begin{document}
\title[Categories, Zappa-Sz{\' e}p products and Exel-Pardo algebras]{On finitely aligned left cancellative small categories, Zappa-Sz{\' e}p products and Exel-Pardo algebras}
\author{Erik B\'edos}
\address{Institute of Mathematics, University of Oslo, PB 1053 Blindern, 0316 Oslo, Norway}
\email{bedos@math.uio.no}
\author{S. Kaliszewski}
\address{School of Mathematical and Statistical Sciences, Arizona State University, Tempe, AZ 85287}
\email{kaliszewski@asu.edu}
\author{John Quigg}
\address{School of Mathematical and Statistical Sciences, Arizona State University, Tempe, AZ 85287}
\email{quigg@asu.edu}
\author{Jack Spielberg}
\address{School of Mathematical and Statistical Sciences, Arizona State University, Tempe, AZ 85287}
\email{spielberg@asu.edu}
\date{December 6, 2018}

\subjclass[2000]{46L05, 46L55}
\keywords{Groups, graphs, self-similarity, category of paths, left cancellative small categories, Zappa-Sz{\'e}p products, Toeplitz algebras, Cuntz-Krieger algebras}

\begin{abstract}
We consider Toeplitz and Cuntz-Krieger $C^*$-algebras associated with finitely aligned left cancellative small categories. We pay special attention to the case where such a category arises as the Zappa-Sz{\'e}p product of a category and a group linked by 
a one-cocycle. As our main application, we obtain a new approach to Exel-Pardo algebras in the case of row-finite graphs. We also present some other ways of constructing $C^*$-algebras from left cancellative small categories and discuss their relationship.
\end{abstract}

\maketitle

\section{Introduction}\label{intro}

By a category of paths we will mean a left cancellative small category with no (nontrivial) inverses. As shown in \cite{Spi11},  one may attach to each finitely aligned category of paths a Toeplitz
$C^*$-algebra and a Cuntz-Krieger $C^*$-algebra, which generalize previously known constructions for graphs, higher-rank graphs and quasi-lattice ordered groups.

The assumption that a category of paths has no inverses is quite restrictive and it is desirable to relax it. This has already been done in several  recent works on $C^*$-algebras associated to left cancellative monoids, see e.g.~\cite{LiSemigroup, norling, Li:nuclear, BRRW, starling15, BS:bdary, BLS:rightLCM, BPRRW, BLS:rightLCM2, AD:weakcont}.
Our primary motivation for  considering more general left cancellative small categories has been our desire to provide a new approach to $C^*$-algebras associated with Exel--Pardo systems \cite{EP, bkqexelpardo}. Our starting point is the observation that if $(E, G, \varphi)$ is an Exel--Pardo system, consisting of an action of a group $G$ on a directed graph $E$ equipped with a 1-cocycle $\varphi\: E^1\times G \to G$
satisfying certain compatibility conditions (see Section \ref{prelim}), and $E^*$ denotes the category of finite paths of $E$, then we may form the Zappa-Sz{\'e}p product $E^*\rtimes^\varphi G$ of the associated system $(E^*, G, \varphi)$. This gives us a left cancellative small category containing nontrivial inverses (unless if $G$ is trivial), and although it is possible to find a certain subcategory of $E^*\rtimes^\varphi G$ which is a category of paths (cf.~Proposition \ref{subpath}), we believe it is better to work with $E^*\rtimes^\varphi G$. In fact,
we may as well consider what we call a category system $(\CC, G, \varphi)$, where $\CC$ is a left cancellative small category, and form its Zappa-Sz{\'e}p product $\CC\rtimes^\varphi G$, as we do in Section  \ref{act cop}.
The most tractable situation is when  $\mathcal{C}$ is \emph{finitely aligned}. This notion
is defined in a similar way as for categories of paths \cite{Spi11}. 
An example of a finitely aligned (even singly aligned) left cancellative small category is provided by the Zappa-Sz{\'e}p product $E^*\rtimes^\varphi G$ arising from an Exel--Pardo system  $(E, G, \varphi)$. 
We also mention that singly aligned left cancellative monoids have often been called \emph{right LCMs} in the recent literature.

In Section \ref{C-alg} we consider the Toeplitz algebra $\mathcal{T}(\mathcal{C})$ and the Cuntz-Krieger algebra 
$\mathcal{O}(\mathcal{C})$ of a finitely aligned left cancellative small category $\CC$. They may both be described as universal $C^*$-algebras generated by families of partial isometries indexed by $\CC$ satisfying certain natural relations, $\oalg\CC$ being a quotient of $\talg\CC$. Equivalently,  $\talg\CC$ is the universal $C^*$-algebra for representations of $\CC$ in $C^*$-algebras, while $\oalg\CC$ is universal for covariant representations of $\CC$. A definition of  $\mathcal{T}(\mathcal{C})$ and $\mathcal{O}(\mathcal{C})$ as groupoid $C^*$-algebras, valid also in the case where
 $\CC$ is not finitely aligned, will be discussed in an article \cite{jackpaths2} by the fourth author.
It should be noted that a different definition has been proposed earlier by Exel in \cite{ExeSemigroupoid} (see also \cite{exelcomb}), where he associates a Cuntz-Krieger like  $C^*$-algebra to any semigroupoid. As a left cancellative small category $\CC$ is an example of a semigroupoid, we compare in Section \ref{C-alg} our approach with Exel's when $\CC$ is finitely aligned. In particular, we explain why our definition of a representation of $\CC$ in a $C^*$-algebra involves an extra condition and give an example showing that this condition does not follow from Exel's conditions.  After adapting Exel's notion of \emph{tightness} for representations of semigroupoids to our setting, we verify (using arguments from \cite{ExeSemigroupoid}) that this notion is equivalent to our notion of covariance for nondegenerate representations of $\CC$.

In Section \ref{ZS algebras} we consider a category system $(\CC, G, \varphi)$. By a representation $(T, U)$ of  $(\CC, G, \varphi)$ in a $C^*$-algebra $B$ we mean a nondegenerate representation $T\:\CC \to B$ and  a unitary homomorphism $U\: G\to M(B)$ satisfying  the condition 
 $U_gT_\alpha=T_{g\alpha}U_{\varphi(g,\alpha)}$ for all $g\in G, \alpha\in\CC. $
Our main result (Theorem \ref{integrated form}) says that if $\CC$ is finitely aligned, and $\DD$ denotes the Zappa-Sz{\'e}p product $\CC\rtimes^\varphi G$, then there is a natural bijective correspondence between representations $(T, U)$ of 
 $(\CC, G, \varphi)$ and nondegenerate representations $S=T\times U$ of $\DD$, having the property that $S$ is covariant if and only if $T$ is too (cf.~Corollary \ref{integrated CP}). This means that 
 $\talg\DD$ (resp. $\oalg\DD$) 
 can loosely be described as a kind of crossed product of $\talg\CC$ (resp.~$\oalg\CC$) by $(G, \varphi)$. This can be stated more precisely by using the concept of $C^*$-blend recently introduced in \cite{exelblend} (see Remarks \ref{TTDD-prop} and \ref{OODD-prop}).

When $(E, G, \varphi)$ is an Exel--Pardo system, we apply our results to the category system $(E^*, G, \varphi)$ and show in Section \ref{EP algebras} that the Toeplitz algebra $\mathcal{T}(E^*\rtimes^\varphi G) $ 
is  isomorphic to the Toeplitz $C^*$-algebra $\mathcal{T}(E, G, \varphi)$ associated to the $C^*$-correspondence $Y^\varphi$ constructed from $(E, G, \varphi)$ in \cite{bkqexelpardo} (see also \cite{EP}). Assuming that $E$ is row-finite, we also show that the Cuntz-Krieger algebra $ \mathcal{O}(E^*\rtimes^\varphi G)$ is isomorphic to the Cuntz-Pimsner $C^*$-algebra $\mathcal{O}(E, G, \varphi)$ associated to $Y^\varphi$.

It appears that one may associate other $C^*$-algebras to a 
left cancellative small category $\mathcal{C}$. Indeed, in Section \ref{final}, we introduce the \emph{regular Toeplitz algebra}
$\TT_\ell(\CC)$ generated by the regular representation of $\CC$ on $\ell^2(\CC)$,  the $C^*$-algebra $C^*({\rm ZM}(\mathcal{C}))$ associated to the so-called \emph{zigzag inverse semigroup} ${\rm ZM}(\mathcal{C})$ of $\mathcal{C}$, and the \emph{Li $C^*$-algebra} $C_{\rm Li}^*(\mathcal{C})$, analogous to the full $C^*$-algebra associated by Li \cite{LiSemigroup} to a left cancellative monoid. These $C^*$-algebras are connected by canonical surjective homomorphisms 
\[ C^*_{\rm Li}(\mathcal{C}) \to C^*({\rm ZM}(\mathcal{C})) \to  \TT_\ell(\CC).\]
 When $\CC$ is a finitely aligned,  there  also exist canonical surjective homomorphisms such that  
\[ C^*({\rm ZM}(\mathcal{C})) \to\mathcal{T}(\mathcal{C})\to \TT_\ell(\CC).\]  
Adapting some results of Donsig and Milan in \cite{DonMil} about categories of paths to our situation, we also note that one may identify $\TT(\CC)$ with the $C^*$-algebra 
that is universal for \emph{finitely join-preserving representations} of  ${\rm ZM}(\mathcal{C})$. Moreover, when $\CC$ is finitely aligned, $\mathcal{O}(\mathcal{C})$ corresponds to the \emph{tight $C^*$-algebra} of ${\rm ZM}(\mathcal{C})$ introduced by Exel \cite{exelcomb}, which is universal for \emph{tight representations} of ${\rm ZM}(\mathcal{C})$.

\section{Preliminaries}\label{prelim}

We recall a few definitions and conventions from \cite{Spi11} and \cite{bkqexelpardo}. 
See also \cite{jackpaths2}.

\subsection*{Small categories}
If $\CC$ is a small category, we refer to the objects in $\CC$ as \emph{vertices}, and we write $\CC^0$ for the set of vertices.
We also
use juxtaposition to indicate composition of morphisms,
and frequently (but not always) identify the vertices with the identity morphisms.
Thus, a \emph{small category} may be defined as a set $\CC$,
a subset $\CC^0\subset \CC$,
two maps $r,s\:\CC\to\CC^0$,
and a partially-defined multiplication
\[
(\alpha,\beta)\mapsto \alpha\beta,
\]
defined if and only if $s(\alpha)=r(\beta)$,
such that
if $s(\alpha)=r(\beta)$ and $s(\beta)=r(\gamma)$ then
\begin{enumerate}
\item
$r(\alpha\beta)=r(\alpha)$
and
$s(\alpha\beta)=s(\beta)$;

\item
$\alpha(\beta\gamma)=(\alpha\beta)\gamma$;

\item
$r(v)=s(v)=v$ for all $v\in \CC^0$;

\item $r(\alpha)\alpha=\alpha s(\alpha)=\alpha$.
\end{enumerate}
A \emph{subcategory} of  $\CC$ is a subset $\EE \subset \CC$ that is closed under $r$, $s$, and composition. It becomes a small category in the obvious way by setting $\EE^0=\CC^0 \cap \EE$.
 
We assume that $\CC^0$ is nonempty and sometimes write
\[
\CC^{(2)}=\{(\alpha,\beta)\in \CC\times \CC:s(\alpha)=r(\beta)\}
\]
for the set of composable pairs.
Note that
$\CC^{(2)} = \CC\times \CC$
if $\CC^0$ is a singleton (in which case $\CC$ is a monoid).

If $S,T\subset\CC$, we write
\[
ST=\{\alpha\beta:\alpha\in S,\beta\in T,s(\alpha)=r(\beta)\},
\]
and similarly for finitely many subsets $S_1,\dots,S_n$.
If $S=\{\alpha\}$ is a singleton we write $\alpha T=\{\alpha\}T$,
and similarly for $S\alpha$.
In particular, for  $v,w$ in $\CC^0$,
we have
$r\inv(v)=v\CC$,
$s\inv(w)=\CC w$,
and
$r\inv(v)\cap s\inv(w)=v\CC w$. 
Note that an element $\alpha\in \CC$ is a vertex if and only if
$\alpha\beta=\beta$ for all $\beta\in s(\alpha) \CC$ and
$\beta\alpha=\beta$ for all $\beta\in \CC r(\alpha)$.

We will often (but not always)
use the convention that when writing $\alpha\beta$ for $\alpha, \beta\in \CC$, we tacitly assume that  the product $\alpha\beta$ is defined.

If $\alpha \in \CC$
then $\alpha$ is called \emph{invertible} if there exists (a necessarily unique) $\beta \in \CC$ such that $\alpha\beta $ and $\beta\alpha$ both belong to $\CC^0$, in which case we often write $\beta = \alpha^{-1}$. Clearly, any vertex in $\CC$ is invertible. We say that the small category $\CC$ has \emph{no inverses} if the set of invertible elements in $\CC$ coincides with $\CC^0$.

We say a small category $\CC$ has \emph{left cancellation}, or is \emph{left cancellative}, if for any $\alpha,\beta,\gamma\in \CC$ such that $s(\alpha) = r(\beta) = r(\gamma)$ we have
\[
\alpha\beta=\alpha\gamma\midtext{implies}\beta=\gamma.
\]
Right cancellation is defined in a similar way.
Note that if $\CC$ is left cancellative then $\alpha\in \CC$ is a vertex if and only if
there exists $\beta\in \CC r(\alpha)$ such that $\beta\alpha=\beta$.
Note also that when $\CC$ is left cancellative, 
$\alpha \in \CC$ is invertible if and only if there exists (a necessarily unique) $\beta \in \CC$ such that $\alpha\beta $ belongs to $\CC^0$; in particular, 
$\CC$ has no inverses if and only if
\[
\alpha\beta\in\CC^0\midtext{implies}\alpha,\beta\in\CC^0.
\]
A \emph{category of paths} is a left cancellative small category $\CC$ with no inverses.

\begin{rem}
The definition of category of paths in \cite{Spi11} also requires right cancellation,
but this property is 
only used in a few places in \cite{Spi11}, which will not affect anything we do here.
\end{rem}

Throughout the remainder of this section, $\CC$ denotes a left cancellative small category.

We
define an equivalence relation on $\CC$ by $\alpha\sim \beta$ when there is an invertible $\gamma\in\CC$ such that $\beta=\alpha\gamma$.

We say $\CC$ is \emph{finitely aligned} if for all $\alpha,\beta\in\CC$ there is a finite 
(possibly empty)
subset $F\subset \CC$  
such that
\[
\alpha\CC\cap \beta\CC=F\CC=\bigcup_{\gamma\in F}\gamma\CC,
\]
and \emph{singly aligned} if 
for all $\alpha,\beta\in\CC$, either $\alpha\CC\cap \beta\CC = \varnothing$ or there is $\gamma\in\CC$ such that
\[
\alpha\CC\cap \beta\CC=\gamma\CC.
\]

If $\CC$ is singly aligned and we have $\alpha\CC\cap \beta\CC=\gamma\CC$ as above, how unique is the $\gamma$?
Suppose
\[
\gamma\CC=\zeta\CC,
\]
so that there are $\lambda,\mu\in\CC$ such that
\[
\gamma=\zeta\lambda\midtext{and}\zeta=\gamma\mu.
\]
Then
$\gamma=\gamma\mu\lambda$,
so by left cancellativity $\mu\lambda=s(\gamma)$,
and similarly $\lambda\mu=s(\zeta)$.
Thus $\mu=\lambda\inv$.
In particular, we have $\gamma\sim\zeta$,
and so $\gamma$ is unique up to equivalence.
Note that this argument generalizes \cite[Lemma~2.2]{BRRW} to our context. Note also that if $\gamma\sim\zeta$, then one readily checks that $\gamma\CC=\zeta\CC$; hence we have
$\gamma\sim\zeta$ if and only if  $\gamma\CC=\zeta\CC$.

The finitely aligned case is only slightly more complicated:
suppose
\begin{equation}\label{FL}
\bigcup_{\gamma\in F}\gamma\CC=\bigcup_{\zeta\in L}\zeta\CC
\end{equation}
for some nonempty subsets $F, L \subseteq \CC$.
Then for all $\gamma\in F$ there are $\zeta\in L,\lambda\in\CC$ such that $\gamma=\zeta\lambda$,
and then there are $\gamma'\in F,\lambda'\in\CC$ such that $\zeta=\gamma'\lambda'$,
so that
\[
\gamma=\gamma'\lambda'\lambda\in \gamma'\CC.
\]
Now, if $F$ is finite,
we can replace it by a subset if necessary so that
for all distinct $\gamma,\gamma'\in F$ we have 
$\gamma\notin \gamma'\CC$,
in which case we say $F$ is 
\emph{independent}.
Similarly we can assume that $L$ is independent.
Then we conclude that
if $F$ and $L$ 
are both
independent and satisfy \eqref{FL}, then
for all $\gamma\in F$ there is $\zeta\in L$ such that $\gamma\sim \zeta$,
and symmetrically for all $\zeta\in L$ there is $\gamma\in F$ such that $\zeta\sim \gamma$.
When this happens we say that $F$ is \emph{unique up to equivalence}.  We will also consider the empty set as independent and unique up to equivalence.

\begin{rem}
\cite[Lemma~3.2]{Spi11} shows that when $\CC$ is a finitely aligned category of paths
(so that $\CC$ not only is left cancellative, but also has no inverses),
there is a unique independent subset $F$ as above.
But as we have seen, 
in the general left cancellative case $F$ is only unique up to equivalence.
\end{rem}

When $\CC$ is finitely aligned, by induction we see that for every finite subset $F\subset \CC$ there is a finite independent set $L$,
that is unique up to equivalence,
such that
\[
\bigcap_{\alpha\in F}\alpha\CC=\bigcup_{\gamma\in L}\gamma\CC,
\]
and we will write $\bigvee F$ for any such choice of independent finite $L$,
keeping in mind that this is only determined up to equivalence.
Thus, if $L\neq \varnothing$ and for every $\gamma\in L$ we choose $\gamma'\sim \gamma$,
and let $L'=\{\gamma':\gamma\in L\}$,
then we also have $\bigvee F=L'$.
When $F=\{\alpha,\beta\}$ we write
$\alpha\vee \beta=L$.
Note that this convention is slightly different from that of \cite{jackpaths2}, where $\bigvee F$ denotes the set of all elements that are equivalent to an element of the set we denote by $\bigvee F$.

If 
$v\in \CC^0$
and 
$F\subset v\CC$,
then $F$
is \emph{exhaustive} at $v$ if
for every $\alpha\in v\CC$ there is $\beta\in F$ with $\alpha\CC\cap\beta\CC\ne\varnothing$.

Later we will need the following elementary result:

\begin{lem}\label{exhaust inv}
Let $\CC$ be a left cancellative small category, let $v\in \CC^0$, and let $F\subset v\CC$ be nonempty.
For each $\alpha\in F$ let $\beta_\alpha\sim \alpha$,
and let
\[
F'=\{\beta_\alpha:\alpha\in F\}.
\]
Then $F$ is exhaustive at $v$ if and only if $F'$ is.
\end{lem}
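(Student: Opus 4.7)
The plan is to reduce the lemma to the single observation, already stated in the excerpt, that equivalent elements generate the same principal right ideal: if $\alpha\sim\zeta$ then $\alpha\CC=\zeta\CC$. Granting this, the proof is almost immediate, so most of the work is making sure the ingredients are in place.

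First I would check that $F'\subset v\CC$, so that ``exhaustive at $v$'' even makes sense for $F'$. If $\beta_\alpha\sim\alpha$, there is an invertible $\gamma\in\CC$ with $\beta_\alpha=\alpha\gamma$; in particular $s(\alpha)=r(\gamma)$ and $r(\beta_\alpha)=r(\alpha)=v$, so $\beta_\alpha\in v\CC$. Next I would record (or re-prove in one line) that $\alpha\CC=\beta_\alpha\CC$: from $\beta_\alpha=\alpha\gamma$ we get $\beta_\alpha\CC\subset \alpha\CC$, and from $\alpha=\beta_\alpha\gamma\inv$ (using that $\gamma$ is invertible) we get the reverse inclusion. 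This is exactly the ``if $\gamma\sim\zeta$ then $\gamma\CC=\zeta\CC$'' observation made just before the statement of \lemref{exhaust inv}.

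Given that $\alpha\CC=\beta_\alpha\CC$ for each $\alpha\in F$, the equivalence of the two exhaustiveness conditions is a tautology: for any $\delta\in v\CC$,
\[
\delta\CC\cap\alpha\CC\ne\varnothing\iff \delta\CC\cap\beta_\alpha\CC\ne\varnothing,
\]
so the clause ``there exists $\alpha\in F$ with $\delta\CC\cap\alpha\CC\ne\varnothing$'' holds if and only if the clause ``there exists $\beta_\alpha\in F'$ with $\delta\CC\cap\beta_\alpha\CC\ne\varnothing$'' holds. Thus $F$ is exhaustive at $v$ if and only if $F'$ is.

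There is really no obstacle here; the only mildly delicate point is being careful that the map $\alpha\mapsto\beta_\alpha$ need not be injective, so that $F'$ may have fewer elements than $F$. But this is harmless: the argument quantifies over elements of $F$ (equivalently, over elements of $F'$), and the equality $\alpha\CC=\beta_\alpha\CC$ is all that is needed.
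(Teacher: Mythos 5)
Your proposal is correct and follows essentially the same route as the paper: both reduce the lemma to the observation that $\alpha\sim\beta_\alpha$ implies $\alpha\CC=\beta_\alpha\CC$, after which the two exhaustiveness conditions are equivalent. Your extra remarks (that $F'\subset v\CC$ and that non-injectivity of $\alpha\mapsto\beta_\alpha$ is harmless) are sound but not needed beyond what the paper's one-line argument already contains.
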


\begin{proof}
By symmetry, it suffices to show that if $F$ is exhaustive at $v$ then so is $F'$.
Let $\beta\in v\CC$.
Since $F$ is exhaustive at $v$,
we can choose $\alpha\in F$ such that $\alpha \CC\cap \beta \CC\ne \varnothing$.
Since $\beta_\alpha\sim \alpha$, we have $\beta_\alpha \CC=\alpha \CC$.
Thus $\beta_\alpha \CC\cap \beta \CC\ne \varnothing$.
This shows that $F'$ is exhaustive at $v$.
\end{proof}

\subsection*{Cocycles}

Let $G$ denote a discrete group (with identity $1$) and $S$ a set.
We write $G\curvearrowright S$ to mean that $G$ acts on $S$ by permutations,
and we write the action as
\[
(g,x)\mapsto gx\:G\times S\to S.
\]
A \emph{cocycle} for an action $G\curvearrowright S$ is a function $\varphi\:G\times S\to G$
satisfying the \emph{cocycle identity}
\begin{equation}\label{cocy-id}
\varphi(gh,x)=\varphi(g,hx)\varphi(h,x)\righttext{for all}g,h\in G,x\in S.
\end{equation}
Plugging in $h=1$ we get $\varphi(g,x)1= \varphi(g,x)\varphi(1,x)$, so  
\begin{equation}\label{cocy-norm}
\varphi(1,x) = 1 \righttext{for all}x\in S.\footnote{Note that this only requires the left cancellative property of $G$; cf.~Remark~\ref{monoid}.}
\end{equation}
One may also consider more general cocycles, taking their values in another group than $G$, but we won't need these in the present work. The cocycle identity is exactly what is needed for $G$ to act on $S\times G$ via
\[
g(x,h)=\bigl(gx,\varphi(g,x)h\bigr)\righttext{for}g,h\in G,x\in S.
\]

\subsection*{Graph cocycles}
We say $G$ acts on a directed graph $E=(E^0,E^1,r,s)$, written $G\curvearrowright E$,
if $G$ acts on the vertex set $E^0$ and the edge set $E^1$
by graph automorphisms, i.e., $G\curvearrowright E^0$, $G\curvearrowright E^1$ and
\[
r(ge)=gr(e)\midtext{and}s(ge)=gs(e)\righttext{for all}g\in G,e\in E^1.
\]

\begin{defn}
A \emph{graph cocycle} for $G\curvearrowright E$ is
a cocycle $\varphi$ for the action of $G$ on the edge set $E^1$ such that
\begin{equation}\label{source-inv}
\varphi(g,e)s(e)=gs(e)\righttext{for all}g\in G,e\in E^1,
\end{equation}
and we call $(E,G,\varphi)$ an \emph{Exel-Pardo system}.
\end{defn}

Note that in \cite{EP} Exel and Pardo impose the stronger condition \[\varphi(g,e)v=gv\righttext{for all} g\in G, e\in E^1, v\in E^0.\]
As we tried to make clear in \cite{bkqexelpardo}, our weaker condition on sources allows for greater flexibility. We note that in \cite{LRRW2}, where self-similar actions of groupoids on the path spaces of finite directed graphs are considered, this equivariance property of the source map is not necessarily satisfied (or even meaningful).

\section{$C^*$-algebras}\label{C-alg}

Let $\CC$ be a finitely aligned left cancellative small category.
There are various ways to associate $C^*$-algebras to $\CC$. 
We follow here the approach
developed by the fourth author for categories of paths in \cite{Spi11},
then generalized to left cancellative small categories in \cite{jackpaths2},
and 
consider the Toeplitz algebra $\talg \CC$ and the Cuntz-Krieger algebra $\oalg \CC$. 
We then compare these two $C^*$-algebras with those we get by using Exel's approach for semigroupoids in \cite{ExeSemigroupoid}.
In section \ref{final}, we discuss other constructions related to the work of Li \cite{LiSemigroup} on semigroup $C^*$-algebras and the work of Donsig-Milan \cite{DonMil} on inverse semigroups and categories of paths.

We will approach the $C^*$-algebras via ``universal representations''.

\begin{defn}\label{spi rep}
A \emph{representation} of a finitely aligned left cancellative small category $\CC$ in a $C^*$-algebra $B$ is a mapping $T\:\CC\to B$
satisfying the axioms in \cite[Theorem~6.3]{Spi11}:
for all $\alpha,\beta\in\CC$,
\begin{enumerate}
\arabicnum
\item 
\label{spi 1}
$T_\alpha^* T_\alpha=T_{s(\alpha)}$;

\item 
\label{spi 2}
$T_\alpha T_\beta=T_{\alpha\beta}$ if $s(\alpha)=r(\beta)$;

\item 
\label{spi 3}
$T_\alpha T_\alpha^* T_\beta T_\beta^*=\bigvee_{\gamma\in \alpha\vee \beta}T_\gamma T_\gamma^*$,
\end{enumerate}
and
a representation $T$ is \emph{covariant} if it satisfies
one additional axiom:
\begin{enumerate}
\setcounter{enumi}{3}
\arabicnum
\item 
\label{spi 4}
$T_v=\bigvee_{\alpha\in F}T_\alpha T_\alpha^*$
for every $v\in \CC^0$ and every finite exhaustive set $F$ at $v$.
\end{enumerate}
When $T$ is a representation of $\CC$ in $B$, we will let $C^*(T)$ denote the $C^*$-subalgebra of $B$ generated by the range of $T$.
\end{defn}
\begin{rem}\label{remSpi} 
Note that (\ref{spi 1}) and (\ref{spi 2}) imply that $T_v$ is a projection in $B$ for every $v \in \CC^0$ and that  $T_\alpha$ is a partial isometry in $B$ for every $\alpha \in \CC$. Hence $T_\alpha T_\alpha^*$ is a projection in $B$ for every $\alpha \in \CC$. Concerning condition (\ref{spi 3}), the join $\bigvee_{\gamma\in \alpha\vee \beta}T_\gamma T_\gamma^*$ is a priori defined as a projection in $B^{**}$. 
The same comment applies also to the  join $\bigvee_{\alpha\in F}T_\alpha T_\alpha^*$ in condition (\ref{spi 4}). 
We also note that condition (\ref{spi 3}) is not ambiguous, as one immediately sees by using Lemma \ref{inverse}.
Finally, the reader should be aware  that by convention the join over an empty index set is defined to be zero. Thus condition (\ref{spi 3}) says in particular that if $v, w \in \CC^0$ and $v\neq w$, then $T_v T_w = 0$, i.e., the projections $T_v$ and $T_v$ are orthogonal to each other.
\end{rem}
\begin{rem}\label{remSpi2} It is also worth mentioning that condition (\ref{spi 3}) does not follow from conditions (\ref{spi 1}), (\ref{spi 2}) and (\ref{spi 4}).
We will illustrate this in Example \ref{not(3)}.
\end{rem}
\begin{lem}\label{inverse}
Let $\CC$ be a finitely aligned left cancellative small category,
and let 
$T\:\CC\to B$ satisfy conditions $($\ref{spi 1}$)$ and $($\ref{spi 2}$)$ in \defnref{spi rep}.
If $\gamma\in\CC$ is invertible then
$T_{\gamma\inv}=T_\gamma^*$ and
$T_\gamma T_\gamma^*=T_{r(\gamma)}$.
Moreover, if $\alpha \sim \alpha'$ in $\CC$, then $T_\alpha T_\alpha^*= T_{\alpha' }T_{\alpha'}^*$.
\end{lem}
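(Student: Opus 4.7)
The plan is to first establish a base fact that the excerpt uses implicitly: for every vertex $v \in \CC^0$, the element $T_v$ is a self-adjoint projection. From (\ref{spi 2}) we get $T_v T_v = T_{vv} = T_v$, and from (\ref{spi 1}) we get $T_v^* T_v = T_{s(v)} = T_v$. Taking adjoints of the second equation and substituting gives $T_v^* = (T_v^* T_v)^* = T_v^* T_v = T_v$, so $T_v$ is self-adjoint and idempotent, hence a projection; consequently $T_\alpha$ is a partial isometry with initial projection $T_{s(\alpha)}$ for every $\alpha \in \CC$.

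Next I would prove $T_{\gamma\inv} = T_\gamma^*$ by a two-step computation of $T_\gamma^* T_\gamma T_{\gamma\inv}$. On one hand, axiom (\ref{spi 1}) gives $T_\gamma^* T_\gamma = T_{s(\gamma)}$, and since $s(\gamma) = r(\gamma\inv)$ we can apply axiom (\ref{spi 2}) to get
\[
T_\gamma^* T_\gamma T_{\gamma\inv} = T_{s(\gamma)} T_{\gamma\inv} = T_{s(\gamma)\gamma\inv} = T_{\gamma\inv}.
\]
On the other hand, axiom (\ref{spi 2}) applied to $\gamma\gamma\inv = r(\gamma)$ yields $T_\gamma T_{\gamma\inv} = T_{r(\gamma)}$, and since $T_{r(\gamma)}$ is a self-adjoint projection with $T_{r(\gamma)} T_\gamma = T_\gamma$ (via (\ref{spi 2}) again), taking adjoints gives $T_\gamma^* T_{r(\gamma)} = T_\gamma^*$, so
\[
T_\gamma^* (T_\gamma T_{\gamma\inv}) = T_\gamma^* T_{r(\gamma)} = T_\gamma^*.
\]
Equating the two expressions yields $T_{\gamma\inv} = T_\gamma^*$, and then $T_\gamma T_\gamma^* = T_\gamma T_{\gamma\inv} = T_{r(\gamma)}$ follows at once.

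For the second statement, write $\alpha' = \alpha\gamma$ for some invertible $\gamma \in \CC$ with $r(\gamma) = s(\alpha)$. Using (\ref{spi 2}) and the first part of the lemma,
\[
T_{\alpha'} T_{\alpha'}^* = T_\alpha T_\gamma T_\gamma^* T_\alpha^* = T_\alpha T_{r(\gamma)} T_\alpha^* = T_\alpha T_{s(\alpha)} T_\alpha^* = T_\alpha T_\alpha^*,
\]
where the last equality uses $T_\alpha T_{s(\alpha)} = T_{\alpha s(\alpha)} = T_\alpha$ from (\ref{spi 2}) together with the small category identity $\alpha s(\alpha) = \alpha$.

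There is no real obstacle here; the argument is purely formal manipulation of axioms (\ref{spi 1}) and (\ref{spi 2}). The only subtlety worth highlighting is the initial verification that $T_v$ is self-adjoint for $v \in \CC^0$, since without it the standard trick $T_{r(\gamma)} T_\gamma = T_\gamma \Rightarrow T_\gamma^* T_{r(\gamma)} = T_\gamma^*$ would not be available.
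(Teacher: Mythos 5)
Your proof is correct and follows essentially the same route as the paper's: the identity $T_{\gamma^{-1}}=T_\gamma^*$ comes from the chain $T_{\gamma^{-1}}=T_{s(\gamma)}T_{\gamma^{-1}}=T_\gamma^*T_\gamma T_{\gamma^{-1}}=T_\gamma^*T_{r(\gamma)}=T_\gamma^*$, and the equivalence-class statement from inserting $T_\gamma T_\gamma^*=T_{r(\gamma)}=T_{s(\alpha)}$ between $T_\alpha$ and $T_\alpha^*$. The only cosmetic difference is that you make explicit the self-adjointness of the vertex projections $T_v$, which the paper records separately in Remark~\ref{remSpi} and uses tacitly in the step $T_\gamma^*T_{r(\gamma)}=\bigl(T_{r(\gamma)}T_\gamma\bigr)^*$.
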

\begin{proof}
If $\gamma\in\CC$ is invertible, then
\begin{align*}
T_{\gamma\inv}
&=T_{s(\gamma) \gamma\inv}
=T_{s(\gamma)}T_{\gamma\inv}
=T_\gamma^*T_\gamma T_{\gamma\inv}\\
&=T_\gamma^*T_{\gamma\gamma\inv}
=T_\gamma^*T_{r(\gamma)}
=\big(T_{r(\gamma)}T_\gamma\big)^*\\
&=T_\gamma^* & & 
\end{align*}
and we get
\[
T_\gamma T_\gamma^*=T_\gamma T_{\gamma\inv}=T_{\gamma\gamma\inv}=T_{r(\gamma)}.
\]
Moreover, if $\alpha \in \CC$ and
$\gamma\in s(\alpha)\CC$ is invertible, then
\begin{align*}
T_{\alpha\gamma}T_{\alpha\gamma}^*
&=T_\alpha T_\gamma T_\gamma^* T_\alpha^*
\\&=T_\alpha T_{r(\gamma)} T_\alpha^*
\\&=T_\alpha T_{s(\alpha)} T_\alpha^*
\\&=T_\alpha T_\alpha^*,
\end{align*}
and  the second assertion follows.
\end{proof}
The next lemma will be useful later.
\begin{lem}\label{cp preserved}
Let $\CC$ be a finitely aligned left cancellative small category,
and let $T$ be a representation of $\CC$.
Let $v\in \CC^0$, and let $F$ be a finite subset of $v\CC$.
For each $\alpha\in F$ let $\beta_\alpha\sim \alpha$,
and put
\[
F'=\{\beta_\alpha:\alpha\in F\}.
\]
Then
\[
\bigvee_{\alpha\in F}T_\alpha T_\alpha^*=\bigvee_{\beta\in F'}T_\beta T_\beta^*.
\]
\end{lem}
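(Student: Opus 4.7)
The plan is to apply the second assertion of \lemref{inverse} termwise. Since $\beta_\alpha \sim \alpha$ for every $\alpha \in F$, that lemma gives
\[
T_\alpha T_\alpha^* = T_{\beta_\alpha} T_{\beta_\alpha}^* \quad \text{for every } \alpha \in F.
\]
Hence the two indexed families of projections $\{T_\alpha T_\alpha^*\}_{\alpha \in F}$ and $\{T_{\beta_\alpha} T_{\beta_\alpha}^*\}_{\alpha \in F}$ coincide entrywise. Taking joins in $B^{**}$ over the index set $F$ therefore yields the same projection, and since the join over $F$ of the right-hand family equals the join of the set $\{T_\beta T_\beta^* : \beta \in F'\}$ (repetitions in $F'$ do not affect the join), we obtain the claimed equality.

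The only subtlety worth flagging is that the map $\alpha \mapsto \beta_\alpha$ need not be injective, so $F'$ may have strictly fewer elements than $F$; but this causes no issue because a join is unaffected by duplication. There is no real obstacle here — the statement is essentially a bookkeeping corollary of \lemref{inverse}, packaged for convenience so that later arguments can pass freely between independent sets that are equivalent up to the relation $\sim$ (as needed, for instance, to ensure that axioms (\ref{spi 3}) and (\ref{spi 4}) of \defnref{spi rep} are unambiguous in light of the non-uniqueness discussed around equation~\eqref{FL}).
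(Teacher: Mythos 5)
Your proof is correct and matches the paper's, which simply observes that the lemma is an immediate consequence of \lemref{inverse}; you have just spelled out the termwise application and the (harmless) possibility that $\alpha\mapsto\beta_\alpha$ is not injective. Nothing further is needed.
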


\begin{proof}
This is an obvious consequence of \lemref{inverse}.
\end{proof}

\begin{defn}\label{spi toep}
A \emph{Toeplitz algebra} of a finitely aligned left cancellative small category $\CC$ is a pair
$(\talg \CC,t)$,
where $\talg \CC$ is a $C^*$-algebra and
$t$ is a representation of $\CC$ in $\talg \CC$
having the universal property that
for every representation $T$ of $\CC$ in a $C^*$-algebra $B$
there is a unique homomorphism $\phi_T\:\talg \CC\to B$ such that
\[
T_\alpha=\phi_T(t_\alpha)\righttext{for all}\alpha\in \CC.
\]
\end{defn}

One readily checks that
$(\talg \CC,t)$ is unique up to isomorphism in the sense that
if $(B,T)$ is any Toeplitz algebra of $\CC$ then
$\phi_T\:\talg \CC\to B$ is an isomorphism.
Thus we commit the usual abuse of referring to ``the'' Toeplitz algebra,
and also we note that $\talg \CC=C^*(t)$.

\begin{defn}\label{spi cp}
A \emph{Cuntz-Krieger algebra} of a finitely aligned left cancellative small category $\CC$ is a pair
$(\oalg \CC,\tilde t\,)$,
where $\oalg \CC$ is a $C^*$-algebra and
$\tilde t$ is a covariant representation of $\CC$ in $\oalg \CC$
having the universal property that
for every covariant representation $T$ of $\CC$ in a $C^*$-algebra $B$
there is a unique homomorphism $\psi_T\:\oalg \CC\to B$ such that
\[
T_\alpha=\psi_T({\tilde t}_\alpha)\righttext{for all}\alpha\in \CC.
\]
\end{defn}
Similarly to the Toeplitz case,
$(\oalg \CC,\tilde t\,)$ is unique up to isomorphism,
and $\oalg \CC=C^*(\,\tilde t\,)$. Note that since $\tilde t$ is a representation of $\CC$ in $\OO(\CC)$, the associated homomorphism 
$\phi_{\,\tilde t}\: \TT(\CC)\to \OO(\CC)$ satisfies 
${\tilde t}_\alpha=\phi_{\,\tilde t}(t_\alpha)$ for all $\alpha\in \CC.$ It follows that $\phi_{\,\tilde t}$ is surjective and we have 
$\phi_T = \psi_T\circ \phi_{\,\tilde t}$ for every covariant representation $T$ of $\CC$.

\begin{rem} \label{rem groupoid approach}
In \cite{jackpaths2}
the approach to $\tsp \CC$ and $\osp \CC$ is via certain groupoids $G$ and $G|_{\partial G}$, respectively.
When $\CC$ is finitely aligned 
\cite[Theorems~9.8 and 10.15]{jackpaths2}
show that $\tsp \CC$ is characterized by the universal properties (\ref{spi 1})--(\ref{spi 3}),
and that $\osp \CC$ is characterized by the universal properties (\ref{spi 1})--(\ref{spi 4}),
in Definition~\ref{spi rep}.
 (Moreover, it is shown in \cite{jackpaths2} that the hypothesis of amenability in \cite[Theorem~8.2]{Spi11} is unnecessary.)
A similar groupoid approach may  be followed for any left cancellative small category, cf.~\cite{jackpaths2}.  We will sketch an alternative approach 
in Remark \ref{existenceTT}.
\end{rem}

Exel works in somewhat greater generality,
namely he starts with a \emph{semigroupoid},
which is not quite a small category because it is not assumed to have identity morphisms.
However, in \cite[Section~7]{ExeSemigroupoid} he considers the special case of small categories.
We will always have a small category, so we will interpret Exel's definitions and results in that context.
Exel studies a version of $\OO(\CC)$,
and in particular when $\CC$ is a row-finite higher-rank graph $\Lambda$ with no sources
Exel recovers the familiar higher-rank-graph algebra $C^*(\Lambda)$.
On the other hand,
Exel does not actually investigate a version of $\TT(\CC)$, but he does at least hint at its definition in the paragraph following \cite[Proposition~4.7]{ExeSemigroupoid}.
Exel defines what he calls representations of $\CC$,
and his ``tight representations'' satisfy an extra property that 
we will recall in Definition \ref{exel-tight}.
In the following theorem,
\exe{exe 1}--\exe{exe 5} constitute Exel's definition of a representation of $\CC$
\cite[Definition~4.1]{ExeSemigroupoid}\footnote{\,where in \exe{exe 2} and \exe{exe 5} we have taken into account that $\CC$ is a small category and not just a semigroupoid};
after the proof we will explain our motivation for adding
the last property \exe{exe 6}.
We should point out that
our numbering does not quite match Exel's because he lists the requirement \exe{exe 3} without a number.

\begin{thm}\label{spiexe}
Let $\CC$ be a finitely aligned left cancellative small category,
let $B$ be a $C^*$-algebra,
and let $T\:\CC\to B$.
Then $T$ is a representation of $\CC$ in the sense of \defnref{spi rep} if and only if
it satisfies the following conditions:
\begin{enumerate}
\exelist
\item \label{exe 1}
$T_\alpha$ is a partial isometry for every $\alpha\in\CC$;

\item \label{exe 2}
for all $\alpha,\beta\in\CC$,
$T_\alpha T_\beta=
\begin{cases}
T_{\alpha\beta}
\quad \text{if} \ s(\alpha)=r(\beta),
\\
\ 0
\quad \ \ \text{otherwise};
\end{cases}$
\\

\item \label{exe 3}
the family of initial projections $\{T_\alpha^* T_\alpha:\alpha\in\CC\}$ commutes,
as does the family of final projections $\{T_\alpha T_\alpha^*:\alpha\in\CC\}$;

\item \label{exe 4}
$T_\alpha T_\alpha^*T_\beta T_\beta^*=0$ if $\alpha \CC\cap \beta \CC=\varnothing$;

\item \label{exe 5}
$T_\alpha^* T_\alpha\ge T_\beta T_\beta^*$
if $s(\alpha)=r(\beta)$;
\item \label{exe 6}
$T_\alpha T_\alpha^* T_\beta T_\beta^*=\bigvee_{\gamma\in \alpha\vee \beta}T_\gamma T_\gamma^*$
for all $\alpha,\beta\in \CC$.
\end{enumerate}
\end{thm}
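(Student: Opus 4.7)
The plan is to verify both implications in turn. For the forward direction ($(\ref{spi 1})$--$(\ref{spi 3})$ imply $\exe{exe 1}$--$\exe{exe 6}$), conditions $\exe{exe 1}$ and $\exe{exe 6}$ are immediate, and $\exe{exe 4}$ reduces to the convention that a join over the empty index set is zero, since $\alpha\CC \cap \beta\CC = \varnothing$ forces $\alpha \vee \beta = \varnothing$. I would first observe, as in Remark \ref{remSpi}, that $(\ref{spi 1})$ and $(\ref{spi 2})$ make each $T_v$ for $v \in \CC^0$ a self-adjoint projection (since $(\ref{spi 1})$ gives $T_v = T_v^*T_v$, which is self-adjoint, and $(\ref{spi 2})$ gives $T_v^2 = T_v$). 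Applying $(\ref{spi 3})$ to two distinct vertices then yields $T_v T_w = 0$ for $v \neq w$, from which the vanishing case of $\exe{exe 2}$ follows by factoring $T_\alpha T_\beta = T_\alpha T_{s(\alpha)} T_{r(\beta)} T_\beta$. For $\exe{exe 3}$, the initial projections $T_\alpha^*T_\alpha = T_{s(\alpha)}$ form a pairwise orthogonal family; the final projections $T_\alpha T_\alpha^*$ pairwise commute because $(\ref{spi 3})$ equates their product to a join of projections, which is self-adjoint. Finally, $\exe{exe 5}$ follows from $T_\alpha^*T_\alpha = T_{s(\alpha)} = T_{r(\beta)}$ together with $T_{r(\beta)} T_\beta = T_\beta$, which forces $T_{r(\beta)} \ge T_\beta T_\beta^*$.

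For the converse direction, only $(\ref{spi 1})$ requires argument, since $(\ref{spi 2})$ is the nonzero case of $\exe{exe 2}$ and $(\ref{spi 3})$ is precisely $\exe{exe 6}$. Conditions $\exe{exe 1}$ and $\exe{exe 2}$ (with $vv=v$) make each $T_v$ an idempotent partial isometry, hence a self-adjoint projection by the standard C*-algebra fact that a norm-one idempotent is automatically self-adjoint (which reduces via a faithful representation to the Hilbert-space statement that any norm-one idempotent is an orthogonal projection). Using $\alpha = \alpha s(\alpha)$ together with $\exe{exe 2}$, one has $T_\alpha = T_\alpha T_{s(\alpha)}$, whence
\[
T_\alpha^*T_\alpha = T_{s(\alpha)} T_\alpha^* T_\alpha T_{s(\alpha)} \le T_{s(\alpha)}.
\]
The reverse inequality $T_\alpha^*T_\alpha \ge T_{s(\alpha)} T_{s(\alpha)}^* = T_{s(\alpha)}$ is exactly $\exe{exe 5}$ applied with $\beta = s(\alpha)$, so equality holds and $(\ref{spi 1})$ is established.

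The only slightly delicate ingredient is the standard fact that an idempotent partial isometry in a C*-algebra is a self-adjoint projection; once that is in hand, everything else is routine bookkeeping with the axioms and with the identities $T_{r(\alpha)} T_\alpha = T_\alpha = T_\alpha T_{s(\alpha)}$.
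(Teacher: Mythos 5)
Your proposal is correct and follows essentially the same route as the paper's proof: the forward direction reduces everything to the facts that the $T_v$ are mutually orthogonal projections and that $T_\alpha^*T_\alpha = T_{s(\alpha)}$, and the converse needs only to recover condition (\ref{spi 1}) from \exe{exe 1}, \exe{exe 2} and \exe{exe 5} after noting that each $T_v$ is an idempotent partial isometry, hence a projection. The only differences are cosmetic: you prove the vanishing case of \exe{exe 2} by inserting $T_{s(\alpha)}T_{r(\beta)}$ rather than computing $(T_\alpha T_\beta)^*T_\alpha T_\beta=0$, and you obtain $T_\alpha^*T_\alpha=T_{s(\alpha)}$ from two opposite inequalities where the paper uses the single chain $T_\alpha^*T_\alpha = T_\alpha^*T_\alpha T_{s(\alpha)} = T_{s(\alpha)}$.
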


\begin{proof}
First assume that $T$ is a representation of $\CC$ in $B$. 
Then \exe{exe 1} is satisfied, as we already pointed out in Remark \ref{remSpi}.  
\exe{exe 2} is satisfied because if $s(\alpha)=r(\beta)$ then $T_\alpha T_\beta=T_{\alpha\beta}$  according to condition (\ref{spi 2}) in Definition \ref{spi rep}, while if $s(\alpha)\ne r(\beta)$ then $T_{s(\alpha)} T_{r(\beta)}=0$,  cf.~Remark \ref{remSpi}, so we get
\[ 
(T_\alpha T_\beta)^*T_\alpha T_\beta = T_\beta^* T_{s(\alpha)}T_\beta  = T_\beta^* T_{s(\alpha)}T_{r(\beta)}T_\beta= 0,
\]
hence $T_\alpha T_\beta =0$.       
For \exe{exe 3},
note that for $\alpha, \beta \in \CC$, we have
 \[
 T_\alpha^* T_\alpha T_\beta^*T_\beta=T_{s(\alpha)}T_{s(\beta)}
 = \begin{cases}
T_{s(\alpha)}
\quad \text{if} \ s(\alpha)=s(\beta),
\\
\ 0
\quad \quad \ \text{otherwise},
\end{cases}
\]
from which it readily follows that 
the initial projections $T_\alpha^* T_\alpha$
commute. 
On the other hand, condition (\ref{spi 3}) in Definition \ref{spi rep} clearly implies that
the range projections $T_\alpha T_\alpha^*$ commute.
\exe{exe 4}
follows from condition (\ref{spi 3}) in Definition \ref{spi rep},
because if $\alpha\CC\cap\beta\CC=\varnothing$ then $\alpha\vee \beta=\varnothing$.
To see that \exe{exe 5}
is satisfied we observe that if $s(\alpha)=r(\beta)$ then 
\[
T_\alpha^* T_\alpha T_\beta T_\beta^* = T_{s(\alpha)}T_\beta T_\beta^*= T_{r(\beta)}T_\beta T_\beta^* = T_\beta T_\beta^*.
\] 
Finally, \exe{exe 6} is exactly condition (\ref{spi 3}) in \defnref{spi rep}.

Conversely, assume that $T$ satisfies the conditions \exe{exe 1}--\exe{exe 6}.
In \cite[Proposition~7.1]{ExeSemigroupoid} Exel points out
some consequences of \exe{exe 1}--\exe{exe 5}
in the case that $\CC$ is a small category. One of these is precisely 
condition (\ref{spi 1}) in Definition \ref{spi rep}. Indeed, after observing that $T_v$ is a projection for each $v\in \CC^0$, he uses \exe{exe 2} and \exe{exe 5} to obtain that
\[
T_\alpha^*T_\alpha = T_\alpha^*T_{\alpha s(\alpha)}= T_\alpha^*T_{\alpha} T_{s(\alpha)} = T_{s(\alpha)}
\]  
for each $\alpha \in \CC$. Next, it is clear that condition (\ref{spi 2}) in Definition \ref{spi rep} follows from \exe{exe 2}.
Finally, condition (\ref{spi 3}) in Definition \ref{spi rep} is exactly \exe{exe 6}.
\end{proof}

\begin{ex}\label{counterexample}
As we mentioned before the statement of \thmref{spiexe}, the condition \exe{exe 6} was not included in Exel's \cite[Definition~4.1]{ExeSemigroupoid}.
Here we show that this sixth condition is necessary.  We consider the left cancellative small category $\CC$ (actually a 2-graph) with the following graph, and identification $\alpha \gamma = \beta \delta$:
\[
\begin{tikzpicture}[scale=2]

\node (0_0) at (0,0) [circle] {$u$};
\node (2_0) at (2,0) [circle] {$z$};
\node (1_1) at (1,1) [circle] {$v$};
\node (1_m1) at (1,-1) [circle] {$w$};

\draw[-latex,thick] (1_1) -- (0_0) node[pos=0.5, inner sep=0.5pt, anchor=south east] {$\alpha$};
\draw[-latex,thick] (1_m1) -- (0_0) node[pos=0.5, inner sep=0.5pt, anchor=north east] {$\beta$};

\draw[-latex,thick] (2_0) -- (1_1) node[pos=0.4, inner sep=0.5pt, anchor=south west] {$\gamma$};

\draw[-latex,thick] (2_0) -- (1_m1) node[pos=0.4, inner sep=0.5pt, anchor=north west] {$\delta$};

\end{tikzpicture}
\]
We will give a representation of $\CC$ satisfying \exe{exe 1} -- \exe{exe 5} but not \exe{exe 6}.  Let $K$ be a fixed Hilbert space.  ($K$ may be taken to have dimension one; we hope that our notation will make the construction easier to understand.)  Let
\[
H_u = \bigoplus_{i=1}^4 H_{u,i}, \quad
H_v = \bigoplus_{i=1}^3 H_{v,i}, \quad
H_w = \bigoplus_{i=1}^3 H_{w,i},
\]
where $H_{u,i}$, $H_{v,i}$, $H_{w,i}$, and $H_z$ are isomorphic to $K$.  We define a representation $T \: \CC \to B(H)$, where $H = H_u \oplus H_v \oplus H_w \oplus H_z$.  For this we need only define partial isometries $T_\alpha$, $T_\beta$, $T_\gamma$, $T_\delta$ with the above properties.  For a subspace $M \subseteq H$ let $P_M$ denote the projection of $H$ onto $M$.
\begin{align*}
T_\alpha^* T_\alpha &= P_{H_v},\ T_\alpha T_\alpha^* = P_{H_{u,1} \oplus H_{u,2} \oplus H_{u,3}} \\
T_\alpha(H_{v,1}) &= H_{u,1},\ T_\alpha(H_{v,2}) = H_{u,2},\ T_\alpha(H_{v,3}) = H_{u,3} \\
T_\gamma^* T_\gamma &= P_{H_z},\ T_\gamma T_\gamma^* = P_{H_{v,1}} \\
T_\delta^* T_\delta &= P_{H_z},\ T_\delta T_\delta^* = P_{H_{w,1}} \\
T_\beta^* T_\beta &= P_{H_w},\ T_\beta T_\beta^* = P_{H_{u,1} \oplus H_{u,2} \oplus H_{u,4}} \\
T_\beta|_{H_{w,1}} &= T_\alpha T_\gamma T_\delta^*|_{H_{w,1}} \\
T_\beta(H_{w,2}) &= H_{u,2},\ T_\beta(H_{w,3}) = H_{u,4}.
\end{align*}
It is straightforward to verify \exe{exe 1} -- \exe{exe 5}.  However,
\begin{align*}
T_\alpha T_\alpha^* H_u &= H_{u,1} \oplus H_{u,2} \oplus H_{u,3} \\
T_\beta T_\beta^* H_u &= H_{u,1} \oplus H_{u,2} \oplus H_{u,4} \\
T_\alpha T_\alpha^* H_u \cap T_\beta T_\beta^* H_u &= H_{u,1} \oplus H_{u,2} \\
T_{\alpha \vee \beta} T_{\alpha \vee \beta}^* H_u &= T_{\alpha \gamma} T_{\alpha \gamma}^* H_u 
= H_{u,1}.
\end{align*}
Therefore \exe{exe 6} does not hold for this representation.

We comment on the motivation for choosing to use \exe{exe 1} -- \exe{exe 6}, instead of just \exe{exe 1} -- \exe{exe 5} as Exel does.  First we give an ad hoc reason.  In the case of a higher rank graph (e.g.\ in the above example), the generally accepted definition of the Toeplitz $C^*$-algebra requires that the last relation in the definition of the higher rank graph $C^*$-algebra (\cite[Definition 2.5(iv)]{raesimsyee}) be relaxed from equality to a weak inequality (see \cite[Definition 4.1(iv)]{raesims}).  As mentioned in \cite[Theorem 5.11]{Spi11}, it is \exe{exe 6} (i.e., condition (\ref{spi 3}) in \defnref{spi rep}) that corresponds to this ``Toeplitz-Cuntz-Krieger'' relation.  Moreover, the results of \cite[Section 5]{Spi11} show that condition \exe{exe 6} is necessary in order that representations of $\CC$ reflect the basic Boolean ring structure corresponding to composition in $\CC$.
\end{ex}

\begin{ex}\label{not(3)} One might ask whether a representation satisfying \exe{exe 1} -- \exe{exe 5} and which is tight, as in Definition \ref{exel-tight}, will automatically also satisfy \exe{exe 6}.  In fact, Exel shows in \cite[Theorem 8.7]{ExeSemigroupoid} that if $\CC$ is a row-finite higher-rank graph with no sources, then this is the case.  However even for $\CC$ a finitely aligned higher-rank graph, \exe{exe 6} does not follow from the other relations (and tightness).  This can be seen using \cite[Example A.3]{raesimsyee}, which we reproduce in a flattened version here:
\[
\begin{tikzpicture}[scale=2]

\node (1_0) at (1,0) [circle] {$x_i$};
\node (2_0) at (2,0) [circle] {$y_i$};
\node (0_1) at (0,1) [circle] {$w_i$};
\node (1_1) at (1,1) [circle] {$v$};
\node (2_1) at (2,1) [circle] {$b$};
\node (0_2) at (0,2) [circle] {$u_i$};
\node (1_2) at (1,2) [circle] {$a$};
\node (2_2) at (2,2) [circle] {$c$};

\draw[-latex,thick] (2_0) -- (1_0) node[pos=0.4, inner sep=0.5pt, anchor=south] {$\eta_i$};
\draw[-latex,thick] (2_1) -- (1_1) node[pos=0.4, inner sep=0.5pt, anchor=south] {$\lambda$};
\draw[-latex,thick] (2_2) -- (1_2) node[pos=0.4, inner sep=0.5pt, anchor=south] {$\beta$};
\draw[-latex,thick] (0_1) -- (1_1) node[pos=0.4, inner sep=0.5pt, anchor=south] {$\delta_i$};
\draw[-latex,thick] (0_2) -- (1_2) node[pos=0.4, inner sep=0.5pt, anchor=south] {$\theta_i$};

\draw[-latex,thick] (0_2) -- (0_1) node[pos=0.4, inner sep=0.5pt, anchor=west] {$\xi_i$};
\draw[-latex,thick] (1_2) -- (1_1) node[pos=0.4, inner sep=0.5pt, anchor=west] {$\mu$};
\draw[-latex,thick] (1_0) -- (1_1) node[pos=0.4, inner sep=0.5pt, anchor=west] {$\gamma_i$};
\draw[-latex,thick] (2_0) -- (2_1) node[pos=0.4, inner sep=0.5pt, anchor=west] {$\varphi_i$};
\draw[-latex,thick] (2_2) -- (2_1) node[pos=0.4, inner sep=0.5pt, anchor=west] {$\alpha$};

\end{tikzpicture}
\]
The index $i$ varies through the positive integers, all vertices $\{u_i,w_i,x_i,y_i : i \ge 1\} \cup \{v,a,b,c\}$ are distinct, and the identifications are $\lambda \alpha = \mu \beta$, $\lambda \varphi_i = \gamma_i \eta_i$ for $i \ge 1$, and $\mu \theta_i = \delta_i \xi_i$ for $i \ge 1$.  (The 2-graph structure is obtained by letting the horizontal edges have degree $(1,0)$, and letting the vertical edges have degree $(0,1)$.)  The key point is that every nontrivial finite exhaustive set at $v$ (i.e., which does not contain $v$ itself) must contain both $\lambda$ and $\mu$, while at $a$ and at $b$ there are no nontrivial finite exhaustive sets.  Then it is possible that the projections $T_a$ and $T_b$ are strictly larger than the (strong operator) sum of the range projections of the partial isometries corresponding to edges with range at $a$ or $b$.  In this case, it is possible for $T_\lambda T_\lambda^*$ and $T_\mu T_\mu^*$ to both dominate a common image of the differences.  We give an explicit example of such a representation $T$.  We note that $\lambda \vee \mu = \lambda \alpha = \mu \beta$.

Let $K$ be a fixed Hilbert space.  We define Hilbert spaces at the vertices of $\CC$.
\begin{align*}
H_v &= \bigoplus_{i \in \Z} H_{v,i} \oplus H_v' \\
H_b &= \bigoplus_{i \ge 0} H_{b,i} \oplus H_b' \\
H_a &= \bigoplus_{i \ge 0} H_{a,i} \oplus H_a',
\end{align*}
where all the Hilbert spaces on the right hand sides are isomorphic to $K$.  Moreover, let $H_c$, $H_{u_i}$, $H_{w_i}$, $H_{x_i}$, $H_{y_i}$ be Hilbert spaces isomorphic to $K$.  Next we define the representation on edges of $\CC$.  Since the initial space must equal the Hilbert space at the source, it is enough to describe the final space; we need specify the partial isometry explicitly only where it is necessary to obey the commutation relations.
\begin{align*}
T_\lambda(H_{b,i}) &= H_{v,i},\ i \ge 0, \\
T_\lambda(H_b') &= H_v', \\
T_\mu(H_{a,i}) &= H_{v,-i},\ i \ge 0, \\
T_\mu(H_a') &= H_v', \\
T_\alpha(H_c) &= H_{b,0}, \\
T_\beta &= T_\mu^* 
T_\lambda T_\alpha,
\\
T_{\varphi_i}(H_{y_i}) &= H_{b,i},\ i \ge 1, \\
T_{\eta_i}(H_{y_i}) &= H_{x_i},\ i \ge 1, \\
T_{\gamma_i} &= T_\lambda T_{\varphi_i} T_{\eta_i}^*, \\
T_{\theta_i}(H_{u_i}) &= H_{a,i},\ i \ge 1, \\
T_{\xi_i} (H_{u_i}) &= H_{w_i},\ i \ge 1, \\
T_{\delta_i} &= T_\mu T_{\theta_i} T_{\xi_i}^*.
\end{align*}
It is straightforward to check that $T$ satisfies \exe{exe 1} -- \exe{exe 5} and is tight.  (We note that by \cite[theorem 7.4 (ii)]{ExeSemigroupoid}, 
cf.\  Theorem \ref{spiexe 2},
tightness follows from covariance, which is easy to check in this case.)  However $T_\lambda T_\lambda^* T_\mu T_\mu^*$ equals the projection onto $H_{v,0} \oplus H_v'$, while $T_{\lambda \vee \mu} T_{\lambda \vee \mu}^*$ equals the projection onto $H_{v,0}$.  Therefore $T$ does not satisfy \exe{exe 6}.

We also note that $T$ is an example of a representation of $\CC$ satisfying conditions (\ref{spi 1}), (\ref{spi 2}) and (\ref{spi 4}) in \defnref{spi rep}, but not (\ref{spi 3}), cf.\ Remark \ref{remSpi2}. Indeed, it is clear from the proof of Theorem \ref{spiexe} that \exe{exe 1} -- \exe{exe 5} imply that conditions (\ref{spi 1}) and (\ref{spi 2}) in \defnref{spi rep} hold. Moreover, $T$ is covariant, that is, it satisfies (\ref{spi 4}) in \defnref{spi rep}.
As $T$ does not satisfy \exe{exe 6}, it does not satisfy (\ref{spi 3}) in \defnref{spi rep}. 
\end{ex}

Our next aim is to prove a ``Cuntz-Krieger version'' of \thmref{spiexe}.
But we will first introduce a notion of nondegeneracy for representations of $\CC$.

\begin{defn}\label{nd}
Let $\CC$ be a finitely aligned left cancellative small category. A representation $T$ of $\CC$ in a $C^*$-algebra $B$ is \emph{nondegenerate} if
the series $\sum_{v\in \CC^0}T_v$ converges strictly to 1 in $M(B)$.
\end{defn}

\begin{prop}\label{rep-nd}
Let $\CC$ be a finitely aligned left cancellative small category, and let $T\:\CC\to B$ be a representation. Consider $T$ as a representation of $\CC$ in $C^*(T)$. Then $T$ is nondegenerate. 
\end{prop}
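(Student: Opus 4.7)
The plan is to exhibit the net $\bigl(P_F\bigr)$, where $P_F := \sum_{v\in F} T_v$ is indexed by finite subsets $F\subset \CC^0$ ordered by inclusion, as a bounded approximate unit of projections for $C^*(T)$. The definition of nondegeneracy given in Definition \ref{nd} is precisely the statement that this net converges strictly to $1$ in $M(C^*(T))$, so this will finish the proof.

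First I would verify that each $P_F$ really is a projection. By Remark \ref{remSpi} each $T_v$ with $v\in\CC^0$ is a projection, and the same remark notes that condition (\ref{spi 3}) forces $T_vT_w=0$ whenever $v\neq w$ in $\CC^0$. Hence the vertex projections are pairwise orthogonal and $P_F=\sum_{v\in F}T_v$ is indeed a projection of norm at most $1$ for every finite $F\subset\CC^0$.

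Next I would show that each generator of $C^*(T)$ is absorbed by $P_F$ once $F$ is large enough. For $\alpha\in\CC$, condition (\ref{spi 2}) gives $T_{r(\alpha)}T_\alpha=T_\alpha$, and combined with orthogonality of the vertex projections this yields $P_FT_\alpha=T_\alpha$ whenever $r(\alpha)\in F$. Similarly, (\ref{spi 1}) together with (\ref{spi 2}) gives $T_\alpha T_{s(\alpha)}=T_\alpha T_\alpha^*T_\alpha=T_\alpha$, so $T_\alpha P_F=T_\alpha$ whenever $s(\alpha)\in F$. Taking adjoints, analogous identities hold for $T_\alpha^*$. Consequently, for any word $a=T_{\alpha_1}^{\sharp_1}\cdots T_{\alpha_n}^{\sharp_n}$ in the generators (with each $\sharp_i\in\{1,*\}$), the finite set $F_0=\{r(\alpha_i),s(\alpha_i):1\le i\le n\}\subset\CC^0$ satisfies $P_Fa=aP_F=a$ for every $F\supset F_0$.

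Since the linear span of such words is dense in $C^*(T)$ and $\|P_F\|\le 1$ uniformly in $F$, a routine $\varepsilon/3$ argument upgrades this to $P_Fa\to a$ and $aP_F\to a$ for every $a\in C^*(T)$. This is exactly the strict convergence of $\sum_{v\in\CC^0}T_v$ to $1$ in $M(C^*(T))$. There is no real obstacle here; the proposition is essentially an unpacking of the relations (\ref{spi 1}), (\ref{spi 2}), and the orthogonality of vertex projections coming from (\ref{spi 3}).
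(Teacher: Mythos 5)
Your proof is correct and follows essentially the same route as the paper's: both exploit that the finite partial sums $P_F$ are projections (hence uniformly bounded) and that they eventually fix every element of a dense spanning set, via the identities $T_vT_\alpha=\delta_{v,r(\alpha)}T_\alpha$ and $T_\alpha T_{s(\alpha)}=T_\alpha$. The only cosmetic difference is that the paper invokes the specific spanning set $\clspn\{T_\alpha T_\beta^* q\}$ from \cite{Spi11}, whereas you work with arbitrary words in the generators and spell out the $\varepsilon/3$ approximation and the two-sided convergence explicitly; this changes nothing essential.
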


\begin{proof}
It follows from  
the proof of
\cite[Proposition~6.7]{Spi11}
that
\[
 C^*(T) =\clspn\{T_\alpha T_\beta^* q:\alpha,\beta\in \CC, q\in P\},
\]
where $P$ is the set of finite products of range projections of the $T_\alpha$.
Since 
the finite partial sums of the series $\sum_{v\in \CC^0}T_v$ are projections,
it suffices to show that for any 
generator
\[
a=T_\alpha T_\beta^* q
\]
the series
\[
\sum_{v\in \CC^0}T_va
\]
converges in norm to $a$.
Note that for all $v\in \CC^0$, 
\[
T_vT_\alpha=\begin{cases}T_\alpha\case v=r(\alpha)\\0\case v\ne r(\alpha).\end{cases}
\]
Thus the series $\sum_{v\in \CC^0}T_va$ has only 
one
nonzero term, and its sum is $a$.
\end{proof}

Since $C^*(t) = \talg \CC$ and $C^*(\, \tilde t\,) = \OO(\CC)$ we get:
\begin{cor}\label{univ nd}
Let $\CC$ be a finitely aligned left cancellative small category. The universal representations $t\:\CC\to \talg \CC$ and $\tilde t\:\CC\to \oalg \CC$ are nondegenerate in the sense of \defnref{nd}.
\end{cor}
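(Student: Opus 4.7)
The plan is to observe that this corollary is essentially an immediate application of Proposition~\ref{rep-nd} combined with the remark, recorded just before the statement, that $C^*(t) = \talg\CC$ and $C^*(\,\tilde t\,) = \oalg\CC$.

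First, I would apply Proposition~\ref{rep-nd} to the universal representation $t\:\CC \to \talg\CC$. Since $t$ is a representation of $\CC$ in the $C^*$-algebra $\talg\CC$, Proposition~\ref{rep-nd} tells us that $t$ is nondegenerate when viewed as a representation of $\CC$ in $C^*(t)$. But $C^*(t) = \talg\CC$ by definition, so $t$ is nondegenerate as a representation of $\CC$ in $\talg\CC$, meaning $\sum_{v \in \CC^0} t_v$ converges strictly to $1$ in $M(\talg\CC)$.

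Next, the same argument applies to $\tilde t$: since every covariant representation is in particular a representation (covariance adds one axiom on top of the defining axioms of a representation), $\tilde t$ is a representation of $\CC$ in $\oalg\CC$. Applying Proposition~\ref{rep-nd} again, $\tilde t$ is nondegenerate as a representation in $C^*(\,\tilde t\,) = \oalg\CC$, so $\sum_{v \in \CC^0} \tilde t_v$ converges strictly to $1$ in $M(\oalg\CC)$.

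There is no genuine obstacle here beyond unpacking definitions; the work was already carried out in Proposition~\ref{rep-nd}, where the decomposition $C^*(T) = \clspn\{T_\alpha T_\beta^* q\}$ from \cite[Proposition~6.7]{Spi11} together with the orthogonality $T_v T_\alpha = 0$ for $v \neq r(\alpha)$ shows that each generator $T_\alpha T_\beta^* q$ is recovered from a single nonzero term of the partial sums of $\sum_v T_v$.
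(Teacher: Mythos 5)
Your proposal is correct and matches the paper's own argument: the corollary is stated immediately after the observation that $C^*(t)=\talg\CC$ and $C^*(\,\tilde t\,)=\oalg\CC$, and follows by applying Proposition~\ref{rep-nd} to each universal representation exactly as you describe. The remark that $\tilde t$ is in particular a representation (covariance being an additional axiom) is the only point worth making explicit, and you made it.
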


In light of Proposition \ref{rep-nd}, we will often restrict our attention to nondegenerate representations of $\CC$ in the sequel. As the following lemma shows, this just means that we will work with nondegenerate  homomorphisms of the associated $C^*$-algebras.

\begin{lem}\label{nd phi_T}
Let $T\:\CC\to B$ be a representation,
and let $\phi_T\:\talg \CC\to B$ be the associated homomorphism.
Then $T$ is nondegenerate in the sense of \defnref{nd}
if and only if $\phi_T$ is nondegenerate in the usual sense that
\[
\clspn\{\phi_T(a)b:a\in \talg \CC,b\in B\}=B.
\]
Moreover, if $T$ is covariant, and  $\psi_T\:\oalg \CC\to B$ denote the associated homomorphism,
then $T$ is nondegenerate in the sense of \defnref{nd}
if and only if $\psi_T$ is nondegenerate in the usual sense.
\end{lem}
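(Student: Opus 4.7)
The plan is to exploit that $\phi_T(t_v) = T_v$ for every $v\in\CC^0$, so the convergence of $\sum_{v\in\CC^0}T_v$ in $M(B)$ is governed by how $\phi_T$ transports the partial sums $p_F:=\sum_{v\in F}t_v$ (for finite $F\subset\CC^0$), which by \corref{univ nd} are known to converge strictly to $1$ in $M(\talg\CC)$. Both directions will turn on the fact that the $P_F:=\sum_{v\in F}T_v=\phi_T(p_F)$ form a net of mutually orthogonal finite sums of the projections $\{T_v\}_{v\in\CC^0}$ (Remark~\ref{remSpi}), hence are projections and in particular uniformly bounded by $1$.

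For the easy direction, suppose $T$ is nondegenerate in the sense of \defnref{nd}. Then for each $b\in B$, $P_F b\to b$ in norm. Since $P_F=\phi_T(p_F)\in\phi_T(\talg\CC)$, this shows $b\in\clspn\{\phi_T(a)b':a\in\talg\CC,\,b'\in B\}$, so $\phi_T$ is nondegenerate.

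For the converse, assume $\phi_T$ is nondegenerate. Because $\{P_F\}$ is uniformly bounded, strict convergence of $P_F$ to $1$ only needs to be checked on the norm-dense subset $\{\phi_T(a)b:a\in\talg\CC,b\in B\}$ of $B$. For such an element,
\[
P_F\phi_T(a)b=\phi_T(p_F a)b,
\]
and since $p_F a\to a$ in norm (by strict convergence $p_F\to 1$ in $M(\talg\CC)$) and $\phi_T$ is norm-continuous, $P_F\phi_T(a)b\to\phi_T(a)b$ in norm. Taking adjoints (using $P_F=P_F^*$) gives $bP_F\to b$ for every $b\in B$. Hence $\sum_{v\in\CC^0}T_v\to 1$ strictly in $M(B)$, i.e.\ $T$ is nondegenerate.

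The second statement is proved by exactly the same argument: replace $(\phi_T,t,\talg\CC)$ by $(\psi_T,\tilde t,\oalg\CC)$ throughout, using \corref{univ nd} to know that $\tilde t$ is nondegenerate, so that $\sum_{v\in\CC^0}\tilde t_v$ converges strictly to $1$ in $M(\oalg\CC)$. The only obstacle worth naming is the standard multiplier-algebra fact that strict convergence of a uniformly bounded net of multipliers is determined by its action on a norm-dense subset of $B$; everything else is immediate from $\phi_T(t_v)=T_v$ and $\psi_T(\tilde t_v)=T_v$.
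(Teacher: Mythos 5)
Your proof is correct and follows essentially the same route as the paper: the forward direction is identical, and your converse direction simply unpacks the standard fact that the paper cites as a black box (a nondegenerate homomorphism extends to a strictly continuous unital homomorphism of multiplier algebras), replacing it with a direct verification of strict convergence via uniform boundedness of the partial sums $P_F$ and density of $\phi_T(\talg\CC)B$. Both arguments transport the strict convergence $\sum_v t_v\to 1$ in $M(\talg\CC)$ (Corollary~\ref{univ nd}) through $\phi_T$, so there is no substantive difference.
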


\begin{proof}
First, if $T$ is nondegenerate in the sense of \defnref{nd},
then for any $b\in B$
\[
\clspn\{
\phi_T(t_v)b:v\in \CC^0\}=B,
\]
so $\phi_T$ is nondegenerate.
Conversely, suppose $\phi_T$ is nondegenerate.
Then $\phi_T$ extends uniquely to a strictly continuous unital homomorphism, still denoted by $\phi_T$, from $M(\talg \CC)$ to $M(B)$.
Thus the series
\[
\sum_{v\in \CC^0}T_v=\sum_{v\in \CC^0}\phi_T(t_v)
\]
converges strictly to $1_{M(B)}$ by \propref{univ nd}. The proof of the final statement is similar and left to the reader.
\end{proof}

In \cite[Definition 4.5]{ExeSemigroupoid},
Exel defines a notion of tightness for  representations of semigroupoids in unital $C^*$-algebras. His definition adapts   to our context as follows.

\begin{defn}\label{exel-tight} Let  $\CC$ be left cancellative finitely aligned small category. If $L\subset \CC$ then a subset $H\subset L$ is called a \emph{covering} of $L$ if for every $\alpha\in L$ there is $\beta\in H$ such that $\alpha\CC\cap\beta\CC\ne\varnothing$.
Next, for finite subsets $F,K\subset \CC$, set
\[
\CC^{F,K}=\left(\bigcap_{\beta\in F}s(\beta)\CC\right)\cap \left(\bigcap_{\gamma\in K}\CC\minus s(\gamma)\CC\right).
\]
Then a representation $T\:\CC\to B$ in a $C^*$-algebra $B$ is said to be \emph{tight} 
if for every pair of finite subsets $F,K\subset \CC$
and for every finite covering $H$ of $\CC^{F,K}$ we have
\begin{equation}\label{tight}
\bigvee_{\alpha\in H}T_\alpha T_\alpha^*
=\prod_{\beta\in F}T_\beta^*T_\beta \prod_{\gamma\in K}(1-T_\gamma^*T_\gamma),
\end{equation}
where $1$ denotes the unit in $M(B)$. 
\end{defn}

For nondegenerate representations, this notion is equivalent to covariance. 

\begin{thm}\label{spiexe 2} 
Let $\CC$ be a finitely aligned left cancellative small category,
and let $T\:\CC\to B$ be a representation of $\CC$ in a $C^*$-algebra $B$.
If $T$ is tight, then it is covariant. On the other hand, if $T$ is nondegenerate and covariant, then
it is tight. 
\end{thm}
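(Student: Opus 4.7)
My plan is to prove the two implications separately. The forward direction (tight $\Rightarrow$ covariant) is a direct specialization of the tightness equation \eqref{tight}, while the reverse (nondegenerate and covariant $\Rightarrow$ tight) requires a case analysis based on the source vertices of the elements of $F$ and $K$.

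For the forward direction, I would fix $v \in \CC^0$ and a finite exhaustive set $F \subset v\CC$ at $v$, and apply \eqref{tight} with parameters $F_0 = \{v\}$, $K_0 = \varnothing$, and $H = F$. By direct inspection $\CC^{F_0, K_0} = s(v)\CC = v\CC$, and the exhaustive property of $F$ at $v$ is exactly the covering property of $F$ for $v\CC$. Since $T_v^* T_v = T_{s(v)} = T_v$ by condition (\ref{spi 1}) of \defnref{spi rep}, and the empty product equals $1$, the right-hand side of \eqref{tight} collapses to $T_v$, yielding the covariance axiom (\ref{spi 4}) in \defnref{spi rep}. This direction uses neither nondegeneracy nor the full strength of tightness.

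For the reverse direction, fix finite $F, K \subset \CC$ and a finite covering $H$ of $\CC^{F, K}$. Using $T_\beta^* T_\beta = T_{s(\beta)}$, the right-hand side of \eqref{tight} becomes $\prod_{\beta \in F} T_{s(\beta)} \cdot \prod_{\gamma \in K}(1 - T_{s(\gamma)})$. Distinct vertex projections are orthogonal (Remark \ref{remSpi}), so I would split into cases. If the sources $\{s(\beta) : \beta \in F\}$ are not all equal, or if all $s(\beta) = v$ but some $\gamma \in K$ has $s(\gamma) = v$, then both sides vanish: $\CC^{F, K} = \varnothing$ forces $H = \varnothing$ so the left side is $0$, while orthogonality kills the right. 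In the remaining subcase ($F \neq \varnothing$, all $s(\beta) = v$, and $s(\gamma) \neq v$ for every $\gamma \in K$), one has $\CC^{F, K} = v\CC$; then $H$ is a finite exhaustive set at $v$, covariance gives $\bigvee_{\alpha \in H} T_\alpha T_\alpha^* = T_v$, and the right-hand side reduces to $T_v$ via $T_v T_{s(\gamma)} = 0$.

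The principal difficulty, and the only point where nondegeneracy enters, is the case $F = \varnothing$. Setting $V = \{s(\gamma) : \gamma \in K\}$, which is finite since $K$ is, the right-hand side becomes $\prod_{v \in V}(1 - T_v) = 1 - \sum_{v \in V} T_v$ in $M(B)$ (again by orthogonality of vertex projections), and $\CC^{\varnothing, K} = \{\alpha \in \CC : r(\alpha) \notin V\}$. The key observation is that finiteness of $H$ forces $\CC^0 \setminus V$ to be finite, say $\{w_1, \dots, w_n\}$, because each such vertex $w$ belongs to $\CC^{\varnothing, K}$ and must be covered by some $\beta \in H$ with $r(\beta) = w$. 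Partitioning $H$ by range produces a finite exhaustive set at each $w_i$, so covariance applied vertex-by-vertex together with orthogonality of the $T_{w_i}$ gives $\bigvee_{\alpha \in H} T_\alpha T_\alpha^* = \sum_{i=1}^n T_{w_i}$. The nondegeneracy hypothesis $\sum_{w \in \CC^0} T_w = 1$ strictly in $M(B)$ then identifies this with $1 - \sum_{v \in V} T_v$, matching the right-hand side. The main obstacle is carrying out this last identification carefully in $M(B)$ using strict convergence, and handling the degenerate subcase $\CC^0 = V$ where both sides become $0$.
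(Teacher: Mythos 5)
Your proposal is correct and follows essentially the same route as the paper's proof: the forward direction via the specialization $F=\{v\}$, $K=\varnothing$, and the reverse direction via the same case analysis on the sources of $F$ and $K$, with nondegeneracy entering only when $F=\varnothing$ to force $\CC^0$ to be finite and to identify $\prod_{\gamma\in K}(1-T_{s(\gamma)})$ with the sum of the remaining vertex projections. The only differences are cosmetic (your $V$ is the complement of the paper's, and you phrase the finiteness argument through the covering of the vertices $w\in\CC^0\setminus V$ rather than through the decomposition $\CC^{\varnothing,K}=\bigcup_{v}v\CC$).
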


\begin{proof}
The proof, which is essentially due to Exel,  is an adaption of the proofs \cite[Proposition~7.3]{ExeSemigroupoid} and \cite[Proposition~7.4]{ExeSemigroupoid} to our context.   
 
1) Assume that $T$ is tight. Let $v\in \CC^0$ and let $H\subset \CC$ be finite and exhaustive at $v$. Setting $F={v}$ and $K = \varnothing$, it is clear that $H$ is a finite covering of $\CC^{F, K}$. Thus we get
\[ \bigvee_{\alpha \in H} T_\alpha T_\alpha^* = 
\prod_{\beta\in F}T_\beta^*T_\beta \, \prod_{\gamma\in K}(1-T_\gamma^*T_\gamma) 
=  T_v \,.\]
Hence $T$ is covariant. 

2) Assume that $T$ is nondegenerate and covariant. 
Let $F,K\subset \CC$ be finite and 
let  $H$ be a finite covering of $\CC^{F,K}$. We will show that equation (\ref{tight}) holds, thus proving that $T$ is tight.

a) Assume that $F\neq \varnothing$. We consider two subcases.
 
i)  Suppose that for all $v\in \CC^0$ we have $F\not \subset  \CC v$. Then we have $\bigcap_{\beta\in F}s(\beta)\CC = \varnothing$, so  $\CC^{F,K} = \varnothing$ and equation (\ref{tight}) amounts to  
\begin{equation} \label{tight-2}\prod_{\beta\in F}T_\beta^*T_\beta \prod_{\gamma\in K}(1-T_\gamma^*T_\gamma) = 0.
\end{equation}
But, as we can then pick $\beta_1, \beta_2 \in F$ such that $s(\beta_1)\neq s(\beta_2)$, we get that $\prod_{\beta\in F}T_\beta^*T_\beta = \prod_{\beta\in F}T_{s(\beta)}= 0$, so equation (\ref{tight}) is satisfied.

ii) Suppose that $F\subset  \CC v \ \text{for some}\  v\in \CC^0$. Then  $\bigcap_{\beta\in F}s(\beta)\CC = v\CC$, so
\[
\CC^{F,K}=v\CC\cap \left(\bigcap_{\gamma\in K}\CC\minus s(\gamma)\CC\right) 
= v\CC\cap \left(\CC \minus \bigcup_{\gamma\in K} s(\gamma)\CC\right) .
\]

Assume first that $v \in \{s(\gamma): \gamma \in K\}$. Then we have $\CC^{F,K} = \varnothing$, so 
equation (\ref{tight}) reduces to equation (\ref{tight-2}). Now, we have $T_v (1-T_{s(\gamma)}) = 0$ for at least one $\gamma \in K$. Thus
\[\prod_{\beta\in F}T_\beta^*T_\beta \, \prod_{\gamma\in K}(1-T_\gamma^*T_\gamma) 
=  T_v  \, \prod_{\gamma\in K}(1-T_{s(\gamma)})= 0. \] 
Hence equation (\ref{tight}) holds in this case.

Next, assume that $v \not \in \{s(\gamma): \gamma \in K\}$. Then we get $\CC^{F,K} =v\CC$. Now $H$ is a finite covering of $\CC^{F,K}= v\CC$, which means that $H$ is exhaustive at $v$. Since $T$ is assumed to be covariant,  we have 
\[
T_v=\bigvee_{\alpha\in H}T_\alpha T_\alpha^*.
\]
As $\prod_{\beta\in F}T_\beta^*T_\beta = T_v$ and $T_v (1 - T_{s(\gamma)}) = T_v$ for all $\gamma \in K$, we get 
\[\prod_{\beta\in F}T_\beta^*T_\beta \, \prod_{\gamma\in K}(1-T_\gamma^*T_\gamma) 
= T_v \, \prod_{\gamma\in K}(1-T_{s(\gamma)}) = T_v = \bigvee_{\alpha\in H}T_\alpha T_\alpha^*.
\]
Thus equation (\ref{tight}) is satisfied in this case too.

b) Assume  that $F =\varnothing$. Then we have \[\CC^{\varnothing,K} = \bigcap_{\gamma\in K}\CC\minus s(\gamma)\CC = \CC \minus \bigcup_{\gamma\in K} s(\gamma)\CC.\] 
We consider two subcases. 

i) Suppose $\CC^{\varnothing,K} =\varnothing$. In other words, we have $\CC = \bigcup_{\gamma\in K} s(\gamma)\CC$. This implies that $\CC^0 = \{ s(\gamma) : \gamma \in K\}$, hence that $\CC^0$ is finite.  Since $T$ is nondegenerate by assumption, we have $\sum_{v\in \CC^0} T_v = 1$, which gives that  $\prod_{\gamma \in K} (1-T^*_\gamma T_\gamma) = 0$. It follows  readily that equation (\ref{tight}) is satisfied.  

ii) Suppose  $\CC^{\varnothing,K} \neq \varnothing$. Setting $V= \CC^0 \minus \{s(\gamma): \gamma \in K\}$
we have $\CC^{\varnothing,K} = \bigcup_{v \in V} v\CC$. Since $H$ is a finite covering of $\CC^{\varnothing,K}$,  $V$ has to be finite, so $\CC^0 = V \cup \{s(\gamma): \gamma \in K\}$ is finite.
Since $T$ is nondegenerate we have $\sum_{v\in \CC^0} T_v = 1$, so we get
\[ \prod_{\gamma\in K}(1-T_\gamma^*T_\gamma) =  \prod_{\gamma\in K}(1-T_{s(\gamma)})
= 1 - \sum_{\gamma \in K} T_{s(\gamma)} = \sum_{v\in V} T_v.
\] 
Since $H \subset \CC^{\varnothing,K}= \bigcup_{v \in V} v\CC$, we have $H = \bigcup_{v \in V} H_v$ (disjoint union), where $H_v := H \cap \,v\CC$ for each $v \in V$. Moreover, since $H$ is a finite covering of $\CC^{\varnothing,K}$, each $H_v$ is a finite covering of $v\CC$, that is, each $H_v$ is exhaustive at $v \in V$. Using that $T$ is covariant we get 
 \[\bigvee_{\alpha \in H} T_\alpha T^*_\alpha = \bigvee_{v \in V} \Big( \bigvee_{\alpha \in H_v} T_\alpha T^*_\alpha \Big)=  \bigvee_{v \in V} T_v =  \sum_{v\in V} T_v =  \prod_{\gamma \in K} (1-T^*_\gamma T_\gamma).\]
 Thus equation (\ref{tight}) is satisfied in this case too.
\end{proof}

Using Theorem \ref{spiexe 2} we readily get:
\begin{cor}
Let $\CC$ be a finitely aligned left cancellative small category. The universal covariant representation $\tilde t\: \CC \to \oalg \CC$ is tight. Moreover, $(\oalg \CC, \tilde t\,)$ is universal for tight representations of $\CC$ in $C^*$-algebras.
\end{cor}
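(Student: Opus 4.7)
The plan is to derive both statements directly from \thmref{spiexe 2} combined with \corref{univ nd}, so this should really just be an assembly of results already in hand; I do not anticipate any serious obstacle.

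First I would argue that $\tilde t$ is tight. By definition, $\tilde t\:\CC \to \oalg\CC$ is a covariant representation, and \corref{univ nd} says that $\tilde t$ is nondegenerate in the sense of \defnref{nd}. Hence the second half of \thmref{spiexe 2} applies and yields that $\tilde t$ is tight.

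For the universality assertion, I would fix a tight representation $T\:\CC \to B$ in some $C^*$-algebra $B$ and produce a unique homomorphism $\oalg\CC \to B$ sending $\tilde t_\alpha$ to $T_\alpha$ for every $\alpha \in \CC$. By the first half of \thmref{spiexe 2}, any tight representation is automatically covariant, so $T$ is a covariant representation of $\CC$ in $B$. The universal property of the Cuntz-Krieger algebra (\defnref{spi cp}) then provides a unique homomorphism $\psi_T\:\oalg\CC \to B$ satisfying $T_\alpha = \psi_T(\tilde t_\alpha)$ for all $\alpha\in\CC$. Uniqueness of $\psi_T$ among \emph{tight} representations is immediate from uniqueness among covariant ones. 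Combined with the first paragraph (which shows $\tilde t$ itself lies in the class of tight representations), this gives that $(\oalg\CC, \tilde t\,)$ enjoys the stated universal property.
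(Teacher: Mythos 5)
Your proposal is correct and is exactly the argument the paper intends: the paper derives this corollary directly from Theorem~\ref{spiexe 2} (tight implies covariant, and nondegenerate covariant implies tight), using Corollary~\ref{univ nd} for the nondegeneracy of $\tilde t$, just as you do. Nothing further is needed.
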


\begin{rem}
In \cite[Definition~4.6]{ExeSemigroupoid}, Exel insists that
the universal tight representation of $\CC$ take values in a unital $C^*$-algebra.
We suspect that the only reason
for this
is so that he can extend representations of $\CC$ to the unitization $\tilde \CC$,
which is a device he introduces in order to deal with the lack of identity morphisms.
Since we deal exclusively with small categories,
we can safely
ignore Exel's requirement
of a unit.
\end{rem}

\section{Cocycles and categories of paths}\label{act cop}

Let $\CC$ be a small category, and assume that a group $G$ acts on the set $\CC$ by permutations
in such a way that
\begin{equation}\label{act on CC}
r(g\alpha)=gr(\alpha)\midtext{and}s(g\alpha)=gs(\alpha)\righttext{for all}g\in G,\alpha\in \CC.
\end{equation}
Observe that we do \emph{not} assume that $G$ acts by automorphisms of the category --- in fact, we will typically not want this!
Note that for all $g\in G$ and $ v\in \CC^0$ we have $gv=gr(v) = r(gv) \in \CC^0$.
It follows easily that $G\curvearrowright \CC^0$ by restriction.

\begin{defn} \label{category cocycle}
With the above notation,
if $\varphi\:G\times \CC\to G$ is a cocycle for the action of $G$ on $\CC$ as a set,
we call $\varphi$ a \emph{category cocycle} for this action if
for all $g\in G$, $v\in \CC^0$, and $(\alpha,\beta) \in \CC^{(2)}$
we also have
\begin{enumerate}
\item \label{coc 1}
$\varphi(g,v)=g$;

\item \label{coc 2}
$\varphi(g,\alpha)s(\alpha)=gs(\alpha)$;

\item \label{coc 3}
$g(\alpha\beta)=(g\alpha)(\varphi(g,\alpha)\beta)$;\,\footnote{\,Using property (\ref{coc 2}), one easily sees that $(g\alpha, \varphi(g,\alpha)\beta) \in \CC^{(2)}$.}

\item \label{coc 4}
$\varphi(g,\alpha\beta)=\varphi\bigl(\varphi(g,\alpha),\beta\bigr)$,
\end{enumerate}
and we call $(\CC,G,\varphi)$ a \emph{category system}. In the case where $\CC$ is a category of paths, we call $(\CC,G,\varphi)$ a \emph{path system}.
\end{defn}

\begin{rem}
In \eqref{act on CC},
the first condition is crucial,
but the second could be dropped without altering our results in the sequel,
provided that condition (ii) in Definition \ref{category cocycle} is replaced by the following condition: 
\begin{enumerate}
\item[(ii')]
$\varphi(g,\alpha)s(\alpha)=s(g\alpha)$;
\end{enumerate}
This would add somewhat more flexibility to the theory.
We thank a referee for this observation.
\end{rem}

\begin{ex}\label{trivialcocy}
A rather trivial way to get a category cocycle for an action $G\curvearrowright \CC$ is to define $\varphi(g,\alpha)=g$ for all $g\in G,\alpha\in\CC$,
and this is the only case where we are guaranteed that
$G$
acts
on $\CC$ by automorphisms of the category.
\end{ex}

\begin{ex}\label{EP ex}
Suppose $(E,G,\varphi)$ is an Exel-Pardo system.
For $n \geq 2$, we let $E^n$ denote the set of all paths in $E$ of length $n$, and let $E^* = \bigcup_{n \geq 0}  E^n$ denote the set of all finite paths in $E$. We will consider $E^*$ as a category of paths, as defined in \cite{Spi11}, composition being defined by concatenation of paths whenever it makes sense.
 
In \cite{EP}, Exel and Pardo consider 
countable groups
and 
finite graphs without sources.
However, their proof of \cite[Proposition 2.4]{EP} shows that without any restriction on $G$ and $E$, the action of $G$ on $E$ 
extends 
to an action 
of $G$ on the category of paths $E^*$,
and also that the cocycle $\varphi\:G\times E^1\to G$
extends uniquely to a category cocycle on $E^*$, also denoted by $\varphi$.

Their construction may 
roughly be described as follows:
First, define $\varphi$ on $G\times E^0$ by (i).
Next, as guided by (iii) and (iv), inductively define the action of $G$ on $E^{n+1}$ and the map $\varphi$ on $G\times E^{n+1}$ for each $n\geq 1$:
for each $g\in G,e\in E^1,$ and $\alpha\in E^n$ such that $e\alpha \in E^{n+1},$ set
\[
g(e\alpha):=(ge)(\varphi(g,e)\alpha), 
\quad
\varphi(g,e\alpha):=\varphi(\varphi(g,e),\alpha).
\]
Thus we obtain a path system $(E^*, G, \varphi)$. Note that since we only assume that equation (\ref{source-inv}) holds, while Exel and Pardo require that $\varphi(g, e)  v = gv$ holds
 for all $g\in G, e \in E^1$, and $ v \in E^0$, property (ii) above is weaker than the corresponding one in \cite{EP}.  
\end{ex}

\begin{rem}
Category systems have also recently been considered by H.~Li and D.~Yang \cite{li-yang} in the case where $\CC$ is a higher-rank graph; in their terminology, a category cocycle is called a restriction map. 
\end{rem} 

To a category system $(\CC,G,\varphi)$  one may associate a small category that we will call its Zappa-Sz{\'e}p product.  Generalizing earlier works of Zappa, Sz{\'e}p (and others) in the case where $\CC$ is a group, such a product has been introduced and studied by Brin in the context of more general multiplicative structures, such as monoids and categories, see~\cite{Brin}. For the 
convenience
of the reader, we give below the details of this construction for a category system.
\begin{prop}\label{product}
Let $(\CC,G,\varphi)$ be a category system.
Put $\DD=\CC\times G$
and $\DD^0=\CC^0\times \{1\}$,
and define $r,s\:\DD\to\DD^0$ by
\[
r(\alpha,g)=\bigl(r(\alpha),1\bigr)
\midtext{and}
s(\alpha,g)=\bigl(g\inv s(\alpha),1\bigr).
\]
For $(\alpha,g),(\beta,h)\in \DD$ with $s(\alpha,g)=r(\beta,h)$, 
define
\begin{equation}\label{zz-prod}
(\alpha,g)(\beta,h)=\bigl(\alpha(g\beta),\varphi(g,\beta)h\bigr).
\end{equation}
Then
$\DD$ is a small category.
Moreover, if $\CC$ is left cancellative then so is $\DD$.
\end{prop}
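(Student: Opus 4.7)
The plan is to verify the four small-category axioms directly from the formulas defining $r$, $s$, and the composition in \eqref{zz-prod}, invoking the category cocycle axioms (i)--(iv) from \defnref{category cocycle} at the appropriate steps, together with the consequence $\varphi(1,\alpha)=1$ from \eqref{cocy-norm}. Left cancellation will then fall out by comparing coordinates.

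First I would check that composition is well-defined whenever the source/range condition holds. If $s(\alpha,g)=r(\beta,h)$, then $g^{-1}s(\alpha)=r(\beta)$, so $s(\alpha)=g\,r(\beta)=r(g\beta)$ by \eqref{act on CC}. Thus $\alpha(g\beta)$ is a legitimate product in $\CC$, and the formula \eqref{zz-prod} makes sense. Next I would verify axiom (1): $r\bigl((\alpha,g)(\beta,h)\bigr)=(r(\alpha(g\beta)),1)=(r(\alpha),1)=r(\alpha,g)$ is immediate, while for the source I use axiom (ii) of the category cocycle, which gives $\varphi(g,\beta)^{-1} g\,s(\beta)=s(\beta)$, hence
\[
s\bigl((\alpha,g)(\beta,h)\bigr)=\bigl((\varphi(g,\beta)h)^{-1}\,g\,s(\beta),1\bigr)=\bigl(h^{-1}s(\beta),1\bigr)=s(\beta,h).
\]

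For associativity, I would expand both $(\alpha,g)\bigl((\beta,h)(\gamma,k)\bigr)$ and $\bigl((\alpha,g)(\beta,h)\bigr)(\gamma,k)$ and compare coordinate by coordinate. The first coordinate reduces, by axiom (iii) applied to $g(\beta(h\gamma))$, to a common expression using that $G$ acts on $\CC$ (so $(\varphi(g,\beta)h)\gamma=\varphi(g,\beta)(h\gamma)$). The second coordinate produces $\varphi(\varphi(g,\beta),h\gamma)\,\varphi(h,\gamma)\,k$ on one side and $\varphi(\varphi(g,\beta)h,\gamma)\,k$ on the other, and these agree by the cocycle identity \eqref{cocy-id}. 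Axioms (3) and (4) are short: for $v=(u,1)\in\DD^0$ one checks $r(v)=s(v)=v$, and the identities
\[
r(\alpha,g)\,(\alpha,g)=\bigl(r(\alpha)\alpha,\varphi(1,\alpha)g\bigr)=(\alpha,g),\qquad (\alpha,g)\,s(\alpha,g)=\bigl(\alpha\,g(g^{-1}s(\alpha)),\varphi(g,g^{-1}s(\alpha))\bigr)=(\alpha,g)
\]
use $\varphi(1,\alpha)=1$ and axiom (i) of the category cocycle respectively.

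Finally, for left cancellation, suppose $(\alpha,g)(\beta,h)=(\alpha,g)(\gamma,k)$ with $s(\alpha,g)=r(\beta,h)=r(\gamma,k)$. Comparing first coordinates gives $\alpha(g\beta)=\alpha(g\gamma)$; left cancellativity of $\CC$ forces $g\beta=g\gamma$, and since $G$ acts by permutations, $\beta=\gamma$. Then the second coordinates reduce to $\varphi(g,\beta)h=\varphi(g,\beta)k$, so $h=k$.

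The main obstacle is the associativity check, where one must be careful to apply axioms (iii) and (iv) together with the cocycle identity in the right order; every other step is essentially a direct coordinate computation.
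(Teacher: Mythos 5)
Your proposal is correct and follows essentially the same route as the paper: a direct coordinate-by-coordinate verification of the small-category axioms, using cocycle axiom (ii) for the source of a composite, axioms (iii)--(iv) together with the cocycle identity for associativity, axiom (i) and $\varphi(1,\alpha)=1$ for the identity laws, and left cancellativity of $\CC$ plus injectivity of the $G$-action for left cancellation in $\DD$. No gaps.
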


\begin{proof} Note first that if $s(\alpha,g)=r(\beta,h)$ then $g^{-1}s(\alpha) = r(\beta)$, hence $r(g\beta)= gr(\beta) = g(g^{-1}s(\alpha))=s(\alpha)$, so the expression
on the right hand side of (\ref{zz-prod}) is well-defined.
Next, let $(\alpha, g), (\beta, h), (\gamma,k) \in \DD$ with $s(\alpha,g)=r(\beta,h)$ and $s(\beta,h)=r(\gamma,k)$. Then
\begin{align*}
r\bigl((\alpha,g)(\beta,h)\bigr)
&=\bigl(r\bigl(\alpha(g\beta)\bigr),1\bigr)
=\bigl(r(\alpha),1\bigr)
=r(\alpha,g),
\end{align*}
\begin{align*}
s\bigl((\alpha,g)(\beta,h)\bigr)
&=\bigl(\bigl(\varphi(g,\beta)h\bigr)\inv s\bigl(\alpha(g\beta)\bigr),1\bigr)
\\&=\bigl(h\inv \varphi(g,\beta)\inv s(g\beta),1\bigr)
\\&=\bigl(h\inv \varphi(g,\beta)\inv gs(\beta),1\bigr)
\\&=\bigl(h\inv s(\beta),1\bigr)
\\&=s(\beta,h),
\end{align*}
and
\begin{align*}
(\alpha,g)\bigl((\beta,h)(\gamma,k)\bigr)
&=(\alpha,g)\bigl(\beta(h\gamma),\varphi(h,\gamma)k\bigr)
\\&=\bigl(\alpha\bigl(g(\beta(h\gamma))\bigr),\varphi\bigl(g,\beta(h\gamma)\bigr)\varphi(h,\gamma)k\bigr)
\\&=\bigl(\alpha(g\beta)\bigl(\varphi(g,\beta)(h\gamma)\bigr),\varphi\bigl(\varphi(g,\beta),h\gamma\bigr)\varphi(h,\gamma)k\bigr)
\\&=\bigl(\alpha(g\beta)\bigl(\varphi(g,\beta)h\bigr)\gamma,\varphi\bigl(\varphi(g,\beta)h,\gamma\bigr)k\bigr)
\\&=\bigl(\alpha(g\beta),\varphi(g,\beta)h\bigr)(\gamma,k)
\\&=\bigl((\alpha,g)(\beta,h)\bigr)(\gamma,k).
\end{align*}
Moreover, for any $(v,1)\in \DD^0$ and $(\alpha, g) \in \DD$ , we have
\[
r(v,1)=\bigl(r(v),1\bigr)=(v,1),
\]
\[
s(v,1)=\bigl(1\inv s(v),1\bigr)=\bigl(s(v),1\bigr)=(v,1),
\]
\begin{align*}
r(\alpha,g)(\alpha,g)
&=\bigl(r(\alpha),1\bigr)(\alpha,g)
\\&=\bigl(r(\alpha)(1\alpha),\varphi(1,\alpha)g\bigr)
\\&=\bigl(r(\alpha)\alpha,1g\bigr)
\\&=(\alpha,g),
\end{align*}
and
\begin{align*}
(\alpha,g)s(\alpha,g)
&=(\alpha,g)\bigl(g\inv s(\alpha),1\bigr)
\\&=\bigl(\alpha\bigl(g(g\inv s(\alpha))\bigr),\varphi\bigl(g,g\inv s(\alpha)\bigr)1\bigr)
\\&=\bigl(\alpha s(\alpha),\varphi(g,s(g\inv \alpha))\bigr)
\\&=(\alpha,g).
\end{align*}

Thus
$\DD$ is a small category.
Assume now that  $\CC$ is left cancellative, and 
suppose that 
\[
(\alpha,g)(\beta,h)=(\alpha,g)(\gamma,k).
\]
Then
$\alpha(g\beta)=\alpha(g\gamma)$,
so $g\beta=g\gamma$ since $\CC$ is left cancellative,
and hence $\beta=\gamma$.
Then we also have
\[
\varphi(g,\beta)h=\varphi(g,\gamma)k=\varphi(g,\beta)k,
\]
so $h=k$.
Therefore $(\beta,h)=(\gamma,k)$. Hence,  $\DD$ is left cancellative.
\end{proof}

\begin{defn}\label{ZS}
If $(\CC,G,\varphi)$ is a category system 
we will denote the small category $\DD$ defined above by $\CC\rtimes^\varphi G$,
and call it the \emph{Zappa-Sz\'ep product} of  $(\CC,G,\varphi)$.
\end{defn}

\begin{rem} \label{groupoid} 
If $(\CC, G, \varphi)$ is a category system and $\CC$ is a groupoid (so every element of $\CC$ is invertible), then one easily verifies that $\CC\rtimes^\varphi G$ is a groupoid. Moreover, if $\CC$ is a group, then $\CC\rtimes^\varphi G$ is also a group, as in the original construction of the Zappa-Sz{\' e}p product. We note that the Zappa-Sz{\' e}p product of topological groupoids has recently been studied in \cite{BPRRW}.
\end{rem}

\begin{rem} \label{monoid} In our definition of a category system $(\CC, G, \varphi)$, one may instead assume that $G$ is a 
monoid  acting on the set $\CC$  by permutations. For each $g\in G$ let 
$\sigma_g$ denote the associated permutation of $\CC$, so $\sigma_g(\alpha)=g\alpha$  for all $\alpha\in \CC$, and set $g^{-1}\alpha:= \sigma_g^{-1}(\alpha)$ for all $\alpha \in \CC$. The monoid $G$ still acts on $\CC^0$ by restriction\,\footnote{ \,Indeed, let $g\in G, v\in \CC^0$. Then, as for a group, we get  $gv \in \CC^0$. Moreover, $gr(g^{-1}v) = r(gg^{-1}v)= r(v) = v$, so $g^{-1}v = r(g^{-1}v) \in \CC^0$. It follows that $\sigma_g$ restricts to a permutation of $\CC^0$.}, so the definition of $\DD=\CC\rtimes^\varphi G$ in Proposition~\ref{product} continues to make sense. It should be clear from our proof of this proposition that we again get a small category, which is left cancellative if  $\CC$ and $G$ 
are both
left cancellative.

If $\CC$ is a left cancellative monoid, considered as a small category with vertex set consisting of the identity element, and $G$ is a left cancellative monoid, the resulting Zappa-Sz{\'e}p product $\CC\rtimes^\varphi G$ will also be a left cancellative monoid. We refer to \cite{BRRW} for many interesting examples illustrating this special situation. 

In the sequel, we will only consider category systems where $G$ is a group.
\end{rem}

\begin{rem}
Even if $\CC$ has no inverses, it is possible for $\DD= \CC\rtimes^\varphi G$ to have inverses other than vertices,
because if $v\in \CC^0$ and $g\ne 1$ then
$(v,g)\notin \DD^0$, but it is not difficult to check that
\begin{align*}
(v,g)\bigl(g\inv v, g\inv\bigr) &=  (v,1) \in \DD^0
\midtext{and}
\\
\bigl(g\inv v, g\inv\bigr)(v,g) &= (g\inv v,1)\in\DD^0,
\end{align*}
so $(v,g)$ is invertible in $\DD$.
 
However, we can find a large subcategory of $\DD$ with no inverses, at least when $(\CC,G,\varphi)$ is a path system:
\end{rem}

\begin{prop}\label{subpath}
Let $(\CC,G,\varphi)$ be a path system.
Define $\DD$ as above.
Then the subset
\[
\EE:=\{(\alpha,g)\in\DD:\alpha\notin \CC^0\text{ or }g=1\}
\]
is a subcategory of $\DD$ 
that
is a category of paths.
\end{prop}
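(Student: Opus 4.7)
The plan is to verify three things in order: that $\EE$ is closed under $r$, $s$, and composition; that $\EE$ inherits left cancellation from $\DD$; and that $\EE$ has no inverses.

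Closure under $r$ and $s$ is immediate: for any $(\alpha,g)\in\EE$, the elements $r(\alpha,g)=(r(\alpha),1)$ and $s(\alpha,g)=(g\inv s(\alpha),1)$ both lie in $\DD^0$, and $\DD^0\subset \EE$ since every element of $\DD^0$ has second coordinate~$1$. For closure under composition, I would take $(\alpha,g),(\beta,h)\in \EE$ with $s(\alpha,g)=r(\beta,h)$ and split into cases based on whether $\alpha$ and $\beta$ are vertices. If $\alpha\notin \CC^0$, then since $\CC$ has no inverses, $\alpha(g\beta)\notin \CC^0$, so the product is in $\EE$. If $\beta\notin \CC^0$, then $g\beta\notin \CC^0$ (because $G$ permutes $\CC$ and, by \eqref{act on CC}, restricts to an action on $\CC^0$, so $g\beta\in\CC^0$ would force $\beta\in\CC^0$), and then again by no-inverses $\alpha(g\beta)\notin \CC^0$. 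The remaining case is $\alpha,\beta\in \CC^0$, in which case the definition of $\EE$ forces $g=h=1$; using the normalization \eqref{cocy-norm}, $\varphi(g,\beta)h=\varphi(1,\beta)\cdot 1=1$, so the product again lies in $\EE$.

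Left cancellation in $\EE$ is simply inherited from $\DD$ by the second half of Proposition~\ref{product}, since composition in $\EE$ is the restriction of composition in $\DD$.

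For no inverses, suppose $(\alpha,g)(\beta,h)\in \EE^0=\DD^0$, so $\bigl(\alpha(g\beta),\varphi(g,\beta)h\bigr)=(v,1)$ for some $v\in \CC^0$. Then $\alpha(g\beta)=v\in \CC^0$, and because $\CC$ itself is a category of paths, this forces $\alpha\in\CC^0$ and $g\beta\in\CC^0$. Applying $g\inv$ and using again that $G$ restricts to an action on $\CC^0$ gives $\beta\in\CC^0$. Now the membership condition for $\EE$ applied to $(\alpha,g)$ with $\alpha\in\CC^0$ forces $g=1$, and similarly $h=1$, so both $(\alpha,g)$ and $(\beta,h)$ lie in $\EE^0$. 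Hence $\EE$ has no inverses and is therefore a category of paths.

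The only mildly delicate step is the composition case $\beta\notin \CC^0$, which requires knowing that the $G$-action restricts to $\CC^0$; this is guaranteed by the hypothesis \eqref{act on CC} combined with $r(v)=v$ for $v\in \CC^0$. Everything else is a short bookkeeping check.
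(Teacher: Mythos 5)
Your proof is correct and follows essentially the same route as the paper's: the same case split for closure under composition (vertex versus non-vertex factors, using the no-inverses property of $\CC$ and the normalization $\varphi(1,\beta)=1$), left cancellation inherited from $\DD$, and the no-inverses check via the criterion that $(\alpha,g)(\beta,h)\in\DD^0$ forces $\alpha,\beta\in\CC^0$ and $g=h=1$. The extra care you take in justifying $g\beta\notin\CC^0$ from the restricted $G$-action on $\CC^0$ is a detail the paper leaves implicit, but it is the right justification.
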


\begin{proof}
First note that $\DD^0\subset \EE$.
Suppose $(\alpha,g),(\beta,h)\in\EE$ and $s(\alpha,g)=r(\beta,h)$.
Then
\[
(\alpha,g)(\beta,h)=\bigl(\alpha(g\beta),\varphi(g,\beta)h\bigr).
\]

There are two cases to consider:

Case 1.
$\alpha\notin \CC^0$ or $\beta\notin \CC^0$.
In the case $\beta\notin \CC^0$ we also have $g\beta\notin \CC^0$.
Thus $\alpha(g\beta)\notin\CC^0$ since $\CC$ has no inverses.
Thus $(\alpha,g)(\beta,h)\in\EE$ in this case.

Case 2.
$\alpha,\beta\in\CC^0$.
Then $g=h=1$,
so
\[
\varphi(g,\beta)=\varphi(1,\beta)=1,
\]
and hence
$(\alpha,g)(\beta,h)\in\EE$ in this case also. 

Note also that if we assume that $(\alpha,g)(\beta,h)\in \DD^0$, then we can conclude that $\alpha, \beta \in \CC^0$ and $g=h=1$.

Thus 
$\EE$ is a subcategory of $\DD$ with $\EE^0 = \DD^0$, 
and $\EE$
is left cancellative (using Proposition~\ref{product}) and has no inverses.
\end{proof}

\begin{defn}
If $(\CC,G,\varphi)$ is a path system 
we will denote the category of paths $\EE$ defined above by $\CC\rtimes_0^\varphi G$,
and call it the \emph{restricted Zappa-Sz\'ep product} of  $(\CC,G,\varphi)$.
\end{defn}

Let $(\CC, G, \varphi)$ be a category system with $\CC$ left cancellative. Arguing as in the proof of Proposition~\ref{subpath}, one sees that $(\alpha, g)$ is invertible in $\DD=\CC\rtimes^\varphi G$ if and only if $\alpha$ is invertible in $\CC$.
It follows that for $(\alpha, g), (\beta, h)\in \DD$, we have
\[
(\alpha,g)\sim (\beta,h)\midtext{if and only if}\alpha\sim\beta.
\]

\begin{prop}\label{finite preserved}
Let $(\CC,G,\varphi)$ be a category system with $\CC$ left cancellative.  Write $\DD = \CC \rtimes^\varphi G$.  Let $\alpha$, $\beta \in \CC$ and $g$, $h \in G$.
\begin{enumerate}
\item $(\alpha,g) \DD \cap (\beta,h) \DD = (\alpha \CC \cap \beta \CC) \times G$.
\item If $\CC$ is finitely aligned, then so is $\DD$, and $(\alpha,g) \vee (\beta,h) = (\alpha \vee \beta) \times \{1\}$. \(In particular, if $\CC$ is singly aligned then so is $\DD$.\)
\end{enumerate}
\end{prop}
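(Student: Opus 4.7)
The plan is to reduce everything to a single computation identifying the principal right ideal $(\alpha,g)\DD$ as a product, and then to deduce both parts directly.

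\textbf{Step 1: Describe $(\alpha,g)\DD$.} I will first show that $(\alpha,g)\DD = \alpha\CC \times G$. An element of $(\alpha,g)\DD$ has the form $(\alpha,g)(\mu,l)$ for some $(\mu,l)\in\DD$ with $s(\alpha,g)=r(\mu,l)$, which by the definitions of $r$ and $s$ amounts to $r(\mu)=g\inv s(\alpha)$. Using \eqref{zz-prod}, this element equals $(\alpha(g\mu),\varphi(g,\mu)l)$. Setting $\delta=g\mu$, the condition $r(\mu)=g\inv s(\alpha)$ becomes $r(\delta)=s(\alpha)$, and as $\mu$ ranges over $\CC$ with $r(\mu)=g\inv s(\alpha)$, $\delta$ ranges over $s(\alpha)\CC$. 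Moreover, for each fixed $\mu$, the second coordinate $\varphi(g,\mu)l$ ranges over all of $G$ as $l$ does. Hence the first coordinate ranges over $\alpha\CC$ and the second over $G$ independently, giving $(\alpha,g)\DD = \alpha\CC\times G$.

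\textbf{Step 2: Part (i).} Intersecting the description from Step 1 for $(\alpha,g)$ and $(\beta,h)$ gives
\[
(\alpha,g)\DD\cap(\beta,h)\DD \;=\; (\alpha\CC\times G)\cap(\beta\CC\times G) \;=\; (\alpha\CC\cap\beta\CC)\times G,
\]
which is (i).

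\textbf{Step 3: Part (ii).} Assume $\CC$ is finitely aligned and let $F=\alpha\vee\beta$ be a finite independent subset of $\CC$ with $\alpha\CC\cap\beta\CC=\bigcup_{\gamma\in F}\gamma\CC$. Applying Step 1 to $(\gamma,1)$ gives $(\gamma,1)\DD=\gamma\CC\times G$, so by (i),
\[
(\alpha,g)\DD\cap(\beta,h)\DD \;=\; \bigcup_{\gamma\in F}(\gamma\CC\times G) \;=\; \bigcup_{(\gamma,1)\in F\times\{1\}}(\gamma,1)\DD.
\]
It remains to check that $F\times\{1\}$ is independent in $\DD$: for distinct $\gamma,\gamma'\in F$, Step 1 gives $(\gamma,1)\in(\gamma',1)\DD=\gamma'\CC\times G$ iff $\gamma\in\gamma'\CC$, which fails by independence of $F$ in $\CC$. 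Thus $\DD$ is finitely aligned and $(\alpha,g)\vee(\beta,h)=F\times\{1\}=(\alpha\vee\beta)\times\{1\}$. The singly aligned case is the special case $|F|\le 1$.

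The main (minor) obstacle is the bookkeeping in Step 1: one must verify that as $(\mu,l)$ varies over all composable pairs, the pair $(g\mu,\varphi(g,\mu)l)$ sweeps out $s(\alpha)\CC\times G$ independently, which uses that $g$ acts bijectively on morphisms with a specified range and that right-multiplication by $l$ is a bijection of $G$. Once Step 1 is in hand, parts (i) and (ii) are immediate.
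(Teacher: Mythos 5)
Your proof is correct and follows essentially the same route as the paper's: both arguments come down to the identification $(\alpha,g)\DD=\alpha\CC\times G$, from which (i) and the transfer of finite alignment and independence in (ii) are immediate. The only cosmetic difference is that you verify this identification by direct cocycle bookkeeping, whereas the paper obtains the containment $\supseteq$ by observing $(\alpha,g)\sim(\alpha,1)$ (since $(s(\alpha),g)$ is invertible) and then multiplying in the untwisted case $g=1$.
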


\begin{proof}
(i) $\subseteq$:  
Assume $z \in (\alpha,g) \DD \cap (\beta,h) \DD$.  Then \[z = (\alpha,g) (\gamma,k) = (\beta,h) (\delta,\ell)\] for some $(\gamma, k), (\delta, \ell) \in \DD$. Thus $z=(\alpha(g \gamma), \varphi(g ,\gamma)k) = (\beta(h \delta), \varphi(h,\delta) \ell)$, hence in particular $\alpha(g \gamma) = \beta(h \delta) \in \alpha \CC \cap \beta \CC$.  Therefore $z \in (\alpha \CC \cap \beta \CC) \times G$. 

$\supseteq$:  
Assume $z \in (\alpha \CC \cap \beta \CC) \times G$. Then $z = (\varepsilon,m)$ where $\varepsilon \in \alpha \CC \cap \beta \CC$ and $m \in G$.
Write 
$\varepsilon=\alpha\lambda$ with $\lambda \in \CC$.
Since $(\alpha,g)=(\alpha,1)(s(\alpha),g)$
and $(s(\alpha),g)$ is invertible, we have $(\alpha,1)\sim (\alpha,g)$. So we get
\[
z=(\alpha\lambda,m)=(\alpha,1)(\lambda,m)\in (\alpha,1)\DD=(\alpha,g)\DD,
\]
and similarly $z\in (\beta,h)\DD$.

(ii) Suppose that $\CC$ is finitely aligned.  Let $(\alpha,g)$, $(\beta,h) \in \DD$.
One easily checks that $\gamma\CC \times G = (\gamma, 1)\DD$ for every $\gamma \in \CC$.
Using (i) we get
\begin{align*}
(\alpha,g) \DD \cap (\beta,h) \DD &= (\alpha \CC \cap \beta \CC) \times G = \big(\bigcup_{\gamma\in\, \alpha\vee \beta}\gamma\CC\big)\times G \\ 
&= \bigcup_{\gamma\in\, \alpha\vee \beta}\big(\gamma\CC \times G\big) = \bigcup_{\gamma\in \,\alpha\vee \beta}(\gamma, 1)\DD\\
&= \bigcup_{
\zeta\,\in \,(\alpha\vee \beta)\times \{1\}}
\zeta\DD.
\end{align*}
Since $\alpha\vee\beta$ is finite and independent, so is $(\alpha\vee \beta)\times \{1\}$. Hence $\DD$ is finitely aligned and $(\alpha,g) \vee (\beta,h) = (\alpha \vee \beta) \times \{1\}$.
\end{proof}

\begin{cor}
Let $(E, G, \varphi)$ be an Exel-Pardo system. Then the resulting Zappa-Sz\'ep product $E^*\rtimes^\varphi G$ is a singly aligned left cancellative small category.
\end{cor}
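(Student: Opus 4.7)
The plan is to reduce this to the general results of \propref{product} and \propref{finite preserved} by checking their hypotheses for the triple $(E^*,G,\varphi)$. From \exref{EP ex} we already know that $(E^*,G,\varphi)$ is a category system, where $E^*$ is viewed as the category of paths under concatenation. So it suffices to verify that $E^*$ itself is a singly aligned left cancellative small category; then \propref{product} gives left cancellativity of $\DD := E^*\rtimes^\varphi G$, and \propref{finite preserved}(ii) upgrades singly aligned on $E^*$ to singly aligned on $\DD$.

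First I would note that $E^*$ is left cancellative: if $\alpha,\beta,\gamma\in E^*$ with $s(\alpha)=r(\beta)=r(\gamma)$ and $\alpha\beta=\alpha\gamma$, then reading off the edges after the $|\alpha|$-th position forces $\beta=\gamma$. Next, the main (and essentially only) step is to check that $E^*$ is singly aligned. Given $\alpha,\beta\in E^*$, I would split into cases: if neither of $\alpha,\beta$ is a prefix of the other then no path in $E^*$ extends both, so $\alpha E^*\cap\beta E^* = \varnothing$; otherwise, say $\beta=\alpha\lambda$ for some $\lambda\in E^*$, and then every common extension of $\alpha$ and $\beta$ is automatically an extension of $\beta$, giving
\[
\alpha E^*\cap \beta E^* \;=\; \beta E^*.
\]
Hence $E^*$ is singly aligned.

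Having verified the hypotheses, I would conclude by invoking \propref{product} (to get that $\DD$ is a left cancellative small category) and \propref{finite preserved}(ii) (to get that $\DD$ is singly aligned, since the singly aligned case is part of that statement). I do not foresee any obstacle; the content of the corollary is essentially the observation that $E^*$ inherits singly-alignedness from the prefix structure of graph paths, and then the Zappa-Sz\'ep product machinery already developed carries it over to $E^*\rtimes^\varphi G$.
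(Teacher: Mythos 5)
Your proposal is correct and follows exactly the paper's route: the paper's proof is the one-line observation that $E^*$ is a singly aligned category of paths, after which Proposition~\ref{product} and Proposition~\ref{finite preserved} give the result. The only difference is that you spell out the (standard) prefix argument showing $E^*$ is left cancellative and singly aligned, which the paper takes for granted.
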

\begin{proof} 
Since $E^*$ is a singly aligned category of paths this follows from Proposition~\ref{product} and Proposition~\ref{finite preserved}.
\end{proof}

Later we will need the following elementary lemma:

\begin{lem}\label{exhaust preserved}
Let $(\CC,G,\varphi)$ be a category system with $\CC$ left cancellative, and assume that $\CC$ is finitely aligned.
Let $\DD=\CC\rtimes^\varphi G$ be the Zappa-Sz\'ep product.
Take any vertex $(v,1)$ of $\DD$,
and let $F\subset (v,1)\DD$.
Put
\[
H=\{\alpha \in v\CC:\text{there is $g\in G$ such that $(\alpha,g)\in F$}\}.
\]
Then $F$ is exhaustive at $(v,1)$ if and only if $H$ is exhaustive at $v$.
\end{lem}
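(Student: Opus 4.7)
The plan is to reduce both implications to Proposition~\ref{finite preserved}(i), which says $(\alpha,g)\DD\cap(\beta,h)\DD=(\alpha\CC\cap\beta\CC)\times G$. In particular, $(\alpha,g)\DD\cap(\beta,h)\DD\ne\varnothing$ if and only if $\alpha\CC\cap\beta\CC\ne\varnothing$, which is exactly the bridge needed between exhaustiveness in $\DD$ and exhaustiveness in $\CC$. Note also that the hypothesis $F\subset(v,1)\DD$ means that each $(\alpha,g)\in F$ satisfies $r(\alpha,g)=(v,1)$, i.e.\ $r(\alpha)=v$, so $H\subset v\CC$ as required.

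For the forward direction, assume $F$ is exhaustive at $(v,1)$ and take any $\beta\in v\CC$. Since $r(\beta,1)=(v,1)$, we have $(\beta,1)\in(v,1)\DD$, so by hypothesis there is $(\alpha,g)\in F$ with $(\alpha,g)\DD\cap(\beta,1)\DD\ne\varnothing$. By Proposition~\ref{finite preserved}(i) this forces $\alpha\CC\cap\beta\CC\ne\varnothing$, and $\alpha\in H$ by construction. Hence $H$ is exhaustive at $v$.

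For the reverse direction, assume $H$ is exhaustive at $v$ and take any $(\beta,h)\in(v,1)\DD$, so that $\beta\in v\CC$. Choose $\alpha\in H$ with $\alpha\CC\cap\beta\CC\ne\varnothing$; by definition of $H$ there is $g\in G$ with $(\alpha,g)\in F$. Applying Proposition~\ref{finite preserved}(i) again yields $(\alpha,g)\DD\cap(\beta,h)\DD=(\alpha\CC\cap\beta\CC)\times G\ne\varnothing$, so $F$ is exhaustive at $(v,1)$.

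There is no real obstacle here: once Proposition~\ref{finite preserved}(i) is in hand, the only mildly delicate point is checking that elements of $F$ automatically have first coordinate in $v\CC$ (so that $H$ really lies in $v\CC$), and that any $(\beta,h)\in(v,1)\DD$ has $\beta\in v\CC$ (so that $H$'s exhaustiveness can be applied); both follow immediately from the definition of $r$ in $\DD$.
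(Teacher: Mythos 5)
Your proof is correct and follows essentially the same route as the paper: both reduce exhaustiveness in $\DD$ to exhaustiveness in $\CC$ via the intersection formula of Proposition~\ref{finite preserved}(i). The only difference is that the paper first normalizes $F$ to $H\times\{1\}$ using Lemma~\ref{exhaust inv} and the equivalences $(\alpha,g)\sim(\alpha,1)$, whereas you apply Proposition~\ref{finite preserved}(i) directly to arbitrary $(\alpha,g)$ and $(\beta,h)$, which makes that normalization step unnecessary.
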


\begin{proof}
Since $(\alpha, 1) \sim (\alpha, g)$ for all $\alpha\in \CC$ and $g\in G$,  we can use \lemref{exhaust inv} to assume without loss of generality that
\[
F=H\times \{1\}.
\]
First assume that $F$ is exhaustive at $(v,1)$.
To see that $H$ is exhaustive at $v$, let $\beta\in v\CC$.
Then $(\beta,1)\in (v,1)\DD$,
so we can choose $(\alpha,1)\in F$ such that $(\alpha,1)\DD\cap (\beta,1)\DD\ne \varnothing$.
Since $(\alpha,1)\DD=\alpha\CC\times G$ and $(\beta,1)\DD=\beta\CC\times G$,
we have $\alpha\CC\cap \beta\CC\ne \varnothing$.
Since $\alpha\in H$, we have shown that $H$ is exhaustive at $v$.

Conversely, assume that $H$ is exhaustive at $v$.
To see that $F$ is exhaustive at $(v,1)$, let $(\beta,g)\in (v,1)\DD$.
Then $\beta\in v\CC$, so we can choose $\alpha\in H$ such that $\alpha\CC\cap \beta\CC\ne \varnothing$.
Since $(\beta,g)\sim (\beta,1)$,
we have
\begin{align*}
(\alpha,1)\DD\cap (\beta,g)\DD
&=(\alpha,1)\DD\cap (\beta,1)\DD
\\&=(\alpha\CC\times G)\cap (\beta\CC\times G)
\\&=(\alpha\CC\cap \beta\CC)\times G
\\&\ne \varnothing.
\end{align*}
Since $(\alpha,1)\in F$, we have shown that $F$ is exhaustive at $(v,1)$.
\end{proof}

\begin{rem}
Suppose that $(\CC, G, \varphi)$ is a path system, and let $\EE = \CC \rtimes^\varphi_0 G$ be its restricted  Zappa-Sz\'ep product.
In this situation, finite alignment need not pass from $\CC$ to $\EE$.  

For example, let $\alpha \in \CC \setminus \CC^0$ and $g_1$, $g_2 \in G$.  Let's consider the set $(\alpha,g_1) \EE \cap (\alpha,g_2) \EE$.  A typical element has the form $(\alpha,g_1) (\gamma,k) = (\alpha, g_2) (\delta,\ell)$, for which $\alpha(g_1 \gamma) = \alpha (g_2 \delta)$ and $\varphi(g_1,\gamma)k = \varphi(g_2,\delta)\ell$, and where at least one of $\gamma$ and $\delta$ is not in $\CC^0$.  By left cancellation in $\CC$ we have $g_1 \gamma = g_2 \delta$, and hence $\delta = g_2^{-1}g_1 \gamma$.  Then we must have $\gamma$, $\delta \not\in \CC^0$, and $\ell = \varphi(g_2, g_2^{-1}g_1 \gamma)^{-1} \varphi(g_1, \gamma) k = \varphi(g_2^{-1}, g_1 \gamma) \varphi(g_1,\gamma) k = \varphi(g_2^{-1} g_1, \gamma) k$.  Thus $k \in G$ can be arbitrary, and we have that $(\alpha,g_1) \EE \cap (\alpha,g_2) \EE = (\alpha \CC \setminus \{\alpha\})\times G$.  If $\alpha \CC \setminus \{ \alpha \}$ has infinitely many independent elements, e.g. if $\CC$ is the path category of a non row-finite graph, then $(\alpha,g_1) \EE \cap (\alpha,g_2) \EE$ might not be a finite union of 
of sets of the form $\epsilon \EE$ with $\epsilon \in \EE$.
\end{rem}

\section{Representations of Zappa-Sz\'ep products}\label{ZS algebras}

Let $\CC$ be a finitely aligned left cancellative small category,
and  let $(\CC,G,\varphi)$ be a category system, as in \secref{act cop}, so that
$G$ is a discrete group acting by permutations on the set $\CC$
and $\varphi\:G\times \CC\to \CC$ is a category cocycle in the sense of
Definition \ref{category cocycle}.
Let $\DD=\CC\rtimes^\varphi G$ be the Zappa-Sz\'ep product as in \defnref{ZS}.
Then $\DD$ is a left cancellative
small category by \propref{product},
and moreover is finitely aligned by \propref{finite preserved}.
In this section we present a semidirect-product-type description of the $C^*$-algebras $\talg \DD$ and $\oalg \DD$.

\begin{defn}\label{covariant}
A \emph{representation} of the category system $(\CC,G,\varphi)$ in a $C^*$-algebra $B$
is a pair $(T,U)$,
where $T\:\CC\to B$ is a nondegenerate representation
and $U\:G\to M(B)$ is a unitary representation such that
\[
U_gT_\alpha=T_{g\alpha}U_{\varphi(g,\alpha)}\righttext{for all}g\in G,\alpha\in\CC.
\]
\end{defn}

\begin{thm}\label{integrated form}
There is a bijective correspondence $(T,U) \mapsto R$ between representations $(T,U)$ of $(\CC,G,\varphi)$ in $B$ and nondegenerate representations $R$ of $\DD$ in $B$, given as follows:
\begin{enumerate}
\item If $(T,U)$ is a representation of $(\CC,G,\varphi)$ in $B$,
then $R\:\DD\to B$ is defined by
\[
R_{(\alpha,g)}=T_\alpha U_g.
\]

\item If $R\:\DD\to B$ is a nondegenerate representation,
then $(T,U)$ is defined by
\begin{align*}
T_\alpha&=R_{(\alpha,1)}\\
U_g&=\sum_{v\in \CC^0}R_{(v,g)},
\end{align*}
where the series converges strictly in $M(B)$.
\end{enumerate}
Note: Later we will  denote by $T\times U$ the nondegenerate representation $R$ of $\DD$  associated to a representation $(T,U)$ of $(\CC,G,\varphi)$. 
\end{thm}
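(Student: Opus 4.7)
The plan is to check the two constructions separately, verify that each produces an object of the required type, and then confirm that they are mutually inverse.

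First, suppose $(T,U)$ is a representation of $(\CC,G,\varphi)$ in $B$ and set $R_{(\alpha,g)}=T_\alpha U_g$. I will verify conditions \eqref{spi 1}--\eqref{spi 3} of \defnref{spi rep} for $R$, together with nondegeneracy. For \eqref{spi 1}, I compute $R_{(\alpha,g)}^*R_{(\alpha,g)} = U_g^* T_{s(\alpha)} U_g$ and then use the covariance relation applied to the vertex $v=g^{-1}s(\alpha)$ (combined with property \eqref{coc 1} of the category cocycle, which says $\varphi(g,v)=g$ for vertices) to show this equals $T_{g^{-1}s(\alpha)} = R_{s(\alpha,g)}$. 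For \eqref{spi 2}, I just insert the definition of $R$ and the product in $\DD$ and move the $U_g$ past $T_\beta$ using the covariance relation. For \eqref{spi 3}, I observe that $R_{(\alpha,g)}R_{(\alpha,g)}^* = T_\alpha U_g U_g^* T_\alpha^* = T_\alpha T_\alpha^*$ and similarly for $\beta$; by \propref{finite preserved}, $(\alpha,g)\vee (\beta,h)=(\alpha\vee\beta)\times\{1\}$, so the join in $\DD$ reduces to the join in $\CC$ and the condition is inherited from the representation $T$. Nondegeneracy of $R$ is immediate since $\DD^0=\CC^0\times\{1\}$ and $R_{(v,1)}=T_v$.

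Next, given a nondegenerate representation $R$ of $\DD$, set $T_\alpha=R_{(\alpha,1)}$ and tentatively $U_g=\sum_{v\in\CC^0}R_{(v,g)}$. Each $(v,g)$ is invertible in $\DD$ (as noted in the remark preceding \propref{subpath}), so by \lemref{inverse} the partial isometries $R_{(v,g)}$ have initial projection $T_{g^{-1}v}$ and range projection $T_v$; for distinct vertices these initial/range projections are mutually orthogonal. From this and nondegeneracy of $T$ (which is straightforward to check from \eqref{spi 1},\eqref{spi 2} for $R$), a standard Cauchy argument shows that the partial sums $S_F=\sum_{v\in F}R_{(v,g)}$ converge strictly in $M(B)$ to an isometry with $U_g^*U_g=\sum_v T_{g^{-1}v}=1$ and $U_gU_g^*=\sum_v T_v=1$, so $U_g$ is unitary. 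To see $U$ is a group homomorphism I compute $U_g U_h = \sum_{v,w}R_{(v,g)}R_{(w,h)}$; the product is nonzero only when $w=g^{-1}v$, in which case the Zappa-Sz\'ep product rule combined with $\varphi(g,g^{-1}v)=g$ (property \eqref{coc 1}) yields $R_{(v,gh)}$, so $U_g U_h=U_{gh}$. That $U_1=1$ follows from $R_{(v,1)}=T_v$. Verifying that $T$ satisfies \eqref{spi 1}--\eqref{spi 3} is direct from the corresponding properties of $R$, using again that $(\alpha,1)\vee(\beta,1)=(\alpha\vee\beta)\times\{1\}$.

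The covariance relation $U_gT_\alpha=T_{g\alpha}U_{\varphi(g,\alpha)}$ is the key identity; I will verify it by computing both sides directly. On the left, $U_g T_\alpha=\sum_v R_{(v,g)}R_{(\alpha,1)}$; the summand is nonzero only when $v=g r(\alpha)=r(g\alpha)$, in which case the product equals $R_{(g\alpha,\varphi(g,\alpha))}$. On the right, $T_{g\alpha}U_{\varphi(g,\alpha)}=\sum_w R_{(g\alpha,1)}R_{(w,\varphi(g,\alpha))}$; the summand is nonzero only when $w=s(g\alpha)$, in which case the product again equals $R_{(g\alpha,\varphi(g,\alpha))}$. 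So the two constructions each land in the right place.

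Finally, to check that the two maps are mutually inverse: starting from $(T,U)$ and passing to $R$ and back gives $T'_\alpha=R_{(\alpha,1)}=T_\alpha U_1=T_\alpha$ and $U'_g=\sum_v T_v U_g=U_g$; starting from $R$ and passing to $(T,U)$ and back gives $R'_{(\alpha,g)}=\sum_v R_{(\alpha,1)}R_{(v,g)}$, which reduces, as in the covariance computation above, to the single term with $v=s(\alpha)$, yielding $R_{(\alpha,g)}$. The main obstacle in this proof is the careful treatment of the strict convergence and unitarity of $U_g$, which relies on the mutual orthogonality of the partial isometries $R_{(v,g)}$; once this is set up, all remaining checks reduce to routine application of the Zappa-Sz\'ep multiplication rule together with the category cocycle axioms, especially $\varphi(g,v)=g$ for $v\in\CC^0$.
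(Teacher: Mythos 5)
Your proposal is correct and follows essentially the same route as the paper: verify conditions (\ref{spi 1})--(\ref{spi 3}) and nondegeneracy for $R=T\times U$ using \propref{finite preserved}, build $U_g$ as a strictly convergent sum of the mutually orthogonal partial isometries $R_{(v,g)}$, extract the covariance relation and the homomorphism property from the Zappa-Sz\'ep multiplication rule, and check the two constructions are mutually inverse. Your use of \lemref{inverse} to identify the initial and final projections of $R_{(v,g)}$ as $T_{g^{-1}v}$ and $T_v$, giving $U_g^*U_g=U_gU_g^*=1$ directly, is a mild streamlining of the paper's argument (which first shows $U_g$ is a partial isometry and only deduces unitarity after establishing $U_1=1$ and the homomorphism property); the only small slip is attributing nondegeneracy of $T$ to (\ref{spi 1})--(\ref{spi 2}) rather than to nondegeneracy of $R$ together with $\DD^0=\CC^0\times\{1\}$.
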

\begin{proof}
Given a representation $(T,U)$ of $(\CC,G,\varphi)$ in $B$,
define $R\:\DD\to B$ by
\[
R_{(\alpha,g)}=T_\alpha U_g.
\]
We need to show that $R$ is a nondegenerate representation of $\DD$.

First we show that
$R$ satisfies (\ref{spi 1}) in \defnref{spi rep}:
for $(\alpha,g)\in\DD$ we have
\begin{align*}
R_{(\alpha,g)}^*R_{(\alpha,g)}
&=(T_\alpha U_g)^*T_\alpha U_g
\\&=U_{g\inv}T_\alpha^*T_\alpha U_g
\\&=U_{g\inv}T_{s(\alpha)}U_g
\\&=T_{g\inv s(\alpha)}U_{\varphi(g\inv,s(\alpha))}U_g
\\&=T_{g\inv s(\alpha)}U_{g\inv}U_g
\\&=T_{g\inv s(\alpha)}
\\&=R_{(g\inv s(\alpha),1)}
\\&=R_{s(\alpha,g)}.
\end{align*}

Next we show that
$R$ satisfies \ref{spi 2} in \defnref{spi rep}:
for $(\alpha,g),(\beta,h)\in\DD$,
if $s(\alpha,g)=r(\beta,h)$ then
$s(\alpha)=r(g\beta)$, so
\begin{align*}
R_{(\alpha,g)}R_{(\beta,h)}
&=T_\alpha U_g T_\beta U_h
\\&=T_\alpha T_{g\beta} U_{\varphi(g,\beta)}U_h
\\&=T_{\alpha (g\beta)}U_{\varphi(g,\beta)h}
\\&=R_{(\alpha (g\beta),\varphi(g,\beta)h)}
\\&=R_{(\alpha,g)(\beta,h)}.
\end{align*}

Now we show that
$R$ satisfies \ref{spi 3} in \defnref{spi rep}:
for $(\alpha,g),(\beta,h)\in \DD$,
\begin{align*}
R_{(\alpha,g)}R_{(\alpha,g)}^*R_{(\beta,h)}R_{(\beta,h)}^*
&=T_\alpha U_g U_g\inv T_\alpha^* T_\beta U_h U_{h\inv} T_\beta^*
\\&=T_\alpha T_\alpha^* T_\beta T_\beta^*
\\&=\bigvee_{\gamma\in \alpha\vee\beta}T_\gamma T_\gamma^*
\\&=\bigvee_{(\gamma,1)\in (\alpha\vee\beta)\times \{1\}}T_\gamma U_1U_1^*T_\gamma^*
\\&\overset{*}=\bigvee_{(\gamma,1)\in (\alpha,g)\vee (\beta,h)}R_{(\gamma,1)}R_{(\gamma,1)}^*,
\end{align*}
where the equality at $*$ follows from \propref{finite preserved}.
Thus $R$ is a representation of $\DD$ in $B$.

We will now show that $R$ is nondegenerate.
Since the finite partial sums of the series $\sum_{(v,1)\in \DD^0}R_{(v,1)}$ are projections,
and since $T$ is nondegenerate,
it suffices to show that
for
elements of the form
\[
a=\sum_{w\in F}T_wb,
\]
where $F\subset \CC^0$ is finite and $b\in B$,
the series
\[
\sum_{(v,1)\in \DD^0}R_{(v,1)}a
\]
converges in norm to $a$.
First note that for all $v,w\in\CC^0$,
\[
R_{(v,1)}T_w=T_vT_w=\begin{cases}T_w\case v=w\\0\case v\ne w,\end{cases}
\]
and so
\[
R_{(v,1)}a=\begin{cases}T_vb\case v\in F\\0\case v\notin F.\end{cases}
\]
Thus the series
\[
\sum_{(v,1)\in \DD^0}R_{(v,1)}a
\]
only has finitely many nonzero terms, and the sum is $a$.

Conversely, suppose that $R\:\DD\to B$ is a nondegenerate representation.
Define $T\:\CC\to B$ and $U\:G\to M(B)$ by
\begin{align*}
T_\alpha&=R_{(\alpha,1)},\\
U_g&=\sum_{v\in \CC^0}R_{(v,g)}.
\end{align*}
We will show that for
$g\in G$ the sum
\[
U_g=\sum_{v\in \CC^0}R_{(v,g)}
\]
converges strictly to a unitary element of $M(B)$,
and the map $g\mapsto U_g$ is a homomorphism.
We will first show that for each $g\in G$ the set of finite partial sums of the series $\sum_{v\in \CC^0}R_{(v,g)}$ consists of partial isometries, hence is norm bounded.
Since each term $R_{(v,g)}$ is a partial isometry, it suffices to show that the range projections
of the family $\{R_{(v,g)}:v\in \CC^0\}$
are mutually orthogonal and also the domain projections are mutually orthogonal.
Let $v,w\in \CC^0$ with $v\ne w$.
For the range projections,
\[
R_{(v,g)}R_{(v,g)}^*R_{(w,g)}R_{(w,g)}^*=\bigvee_{(\gamma, k) \in (v,g)\vee (w,g)} R_{(\gamma, k)}\ = 0
\]
since $(v,g)\vee (w,g) = (v\vee w)\times\{1\} = \varnothing$.

For the domain projections,
\begin{align*}
R_{(v,g)}^*R_{(v,g)}R_{(w,g)}^*R_{(w,g)}
&=R_{s(v,g)}R_{s(w,g)}
=R_{(v,1)}R_{(w,1)}
=0
\end{align*}
since $(v,1)$ and $(w,1)$ are distinct elements of $\DD^0$.

Next we will
show that for any
element of the form
\[
a=\sum_{w\in F}R_{(v,1)}b,
\]
where $F\subset \CC^0$ is finite and $b\in B$,
the two series
\[
\sum_{v\in \CC^0}R_{(v,g)}a
\midtext{and}
\sum_{v\in \CC^0}R_{(v,g)}^*a
\]
converge in norm.
First note that for all $v,w\in \CC^0$,
\begin{align*}
R_{(v,g)}R_{(w,1)}
&=R_{(v,g)(w,1)}
=R_{(v(gw),\varphi(g,w)}
=\begin{cases}R_{(gw,g)}\case v=gw\\0\case v\ne gw.\end{cases}
\end{align*}
Thus with $a=\sum_{w\in F}R_{(w,1)}b$ as above,
only finitely many terms of the series
\[
\sum_{v\in \CC^0}R_{(v,1)}a
\]
are nonzero, so the series converges in norm.
Similarly,
\[
R_{(v,g)}^*R_{(w,1)}
=(R_{(w,1)}R_{(v,g)})^*
=\begin{cases}R_{(v,g)}^*\case w=v\\0\case w\ne v,\end{cases}
\]
so
with $a$ as above
only finitely many terms of the series
\[
\sum_{v\in \CC^0}R_{(v,1)}^*a
\]
are nonzero, 
and hence
the series converges in norm.

Thus we are now assured that for $g\in G$ the series
\[
\sum_{v\in \CC^0}R_{(v,g)}
\]
converges strictly in $M(B)$,
and its sum $U_g$ is then a partial isometry.

For convenient reference, we record the following computations:
for all $g\in G,(\alpha,h)\in \DD$,
\begin{align*}
U_gR_{(\alpha,h)}
&=\sum_{v\in \CC^0}R_{(v,g)}R_{(\alpha,h)}
\\&=R_{(g\alpha,\varphi(g,\alpha)h)}\righttext{(need $v=r(g\alpha)$)}
\\
R_{(\alpha,h)}U_g
&=\sum_{v\in \CC^0}R_{(\alpha,h)}R_{(v,g)}
\\&=R_{(\alpha,\varphi(h,h\inv s(\alpha))g)}\righttext{(need $v=h\inv s(\alpha)$)}
\\&=R_{(\alpha,hg)}\righttext{(since $h\inv s(\alpha)\in \CC^0$).}
\end{align*}
Note that in the first of these, in the particular case that $(\alpha,h)=(v,1)\in \DD^0$ we have
\[U_gR_{(v,1)}=R_{(gv,g)}
\]
since $\varphi(g,v)=g$ because $v\in \CC^0$.

Next we show that $g\mapsto U_g$ is a homomorphism.
Let $g,h\in G$.
It suffices to check
that the equation
$U_gU_h=U_{gh}$
holds in $M(B)$
after multiplying on the right by an element of the form $R_{(v,1)}b$ for $v\in \CC^0$ and $b\in B$:
\begin{align*}
U_gU_hR_{(v,1)}b
&=U_gR_{(hv,h)}b
=R_{(ghv,gh)}b
=U_{gh}R_{(v,1)}b.
\end{align*}
Since
the representation $R\:\CC\to B$ is nondegenerate,
we have
\begin{align*}
U_1=\sum_{v\in \CC^0}R_{(v,1)}=\sum_{(v,1)\in \DD^0}R_{(v,1)}=1.
\end{align*}
It follows that
each $U_g$ is an invertible partial isometry, and hence is a unitary.
Thus $U\:G\to M(B)$ is a unitary 
homomorphism.

Next we show that $T\:\CC\to B$ is a nondegenerate representation.
For \ref{spi 1} in \defnref{spi rep},
if $\alpha\in\CC$ then
\begin{align*}
T_\alpha^*T_\alpha
&=R_{(\alpha,1)}^*R_{(\alpha,1)}
\\&=R_{s(\alpha,1)}
\\&=R_{(s(\alpha),1)}
\\&=T_{s(\alpha)}.
\end{align*}
For \ref{spi 2} in \defnref{spi rep},
if $\alpha,\beta\in\CC$ with $s(\alpha)=r(\beta)$ then
\[
s(\alpha,1)=(s(\alpha),1)=(r(\beta),1)=r(\beta,1),
\]
so
\begin{align*}
T_\alpha T_\beta
&=R_{(\alpha,1)}R_{(\beta,1)}
\\&=R_{(\alpha,1)(\beta,1)}
\\&=R_{(\alpha\beta,1)}
\\&=T_{\alpha\beta}.
\end{align*}
For \ref{spi 3} in \defnref{spi rep},
if $\alpha,\beta\in\CC$ then
\begin{align*}
T_\alpha T_\alpha^* T_\beta T_\beta^*
&=R_{(\alpha,1)}R_{(\alpha,1)}^*R_{(\beta,1)}R_{(\beta,1)}^*
\\&=\bigvee_{(\gamma,h)\in (\alpha,1)\vee (\beta,1)}R_{(\gamma,h)}R_{(\gamma,h)}^*
\\&=\bigvee_{(\gamma,1)\in (\alpha\vee \beta)\times \{1\}}R_{(\gamma,1)}R_{(\gamma,1)}^*
\\&=\bigvee_{\gamma\in \alpha\vee\beta}T_\gamma T_\gamma^*.
\end{align*}
Next we show that $T$ is nondegenerate,
i.e., that the series $\sum_{v\in \CC^0}T_v$ of projections converges strictly to 1 in $M(B)$.
But this follows immediately from nondegeneracy of $R$, since
\[
\DD^0=\CC^0\times \{1\}.
\]

Thus we have a pair $(T,U)$,
where $T\:\CC\to B$ is a nondegenerate representation and $U\:G\to M(B)$ is a unitary representation.
To complete the verification that $(T,U)$ is a representation of $(\CC,G,\varphi)$,
let $g\in G$ and $\alpha\in\CC$. Then
\begin{align*}
U_gT_\alpha
&=U_gR_{(\alpha,1)}
\\&=R_{(g\alpha,\varphi(g,\alpha))}
\\&=R_{(g\alpha,1)}U_{\varphi(g,\alpha)}
\\&=T_{g\alpha}U_{\varphi(g,\alpha)}.
\end{align*}

Now we need to verify that we have a bijective correspondence between representations of $(\CC,G,\varphi)$ and nondegenerate representations of $\DD$.
Starting with a representation $(T,U)$ of $(\CC,G,\varphi)$ in $B$,
and letting $R$ be the associated nondegenerate representation of $\DD$ in $B$,
and then in turn letting $(T',U')$ be the associated representation of $(\CC,G,\varphi)$,
for $\alpha\in \CC$ we have
\begin{align*}
T'_\alpha
&=R_{(\alpha,1)}
=T_\alpha U_1
=T_\alpha,
\end{align*}
and for $g\in G$ we have
\begin{align*}
U'_g
&=\sum_{v\in \CC^0}R_{(v,g)}
\\&=\sum_{v\in \CC^0}T_vU_g
\\&=1_{M(B)}U_g
\\&=U_g.
\end{align*}
Finally, starting with a nondegenerate representation $R$ of $\DD$ in $B$,
and letting $(T,U)$ be the associated representation of $(\CC,G,\varphi)$,
and then in turn letting $R'$ be the associated nondegenerate representation of $\DD$,
for $(\alpha,g)\in\DD$ we have
\[
R'_{(\alpha,g)}
=T_\alpha U_g
=R_{(\alpha,1)}U_g
=R_{(\alpha,g)}.
\qedhere
\]
\end{proof}
\begin{rem}\label{TTDD-prop}
Consider a category system  $(\CC,G,\varphi)$ with $\CC$ finitely aligned and $\DD = \CC\rtimes^\varphi G$. 
Let $r^\DD\:\DD \to \talg\DD$ denote the universal representation of $\DD$ and let $(t^\DD, u^\DD)$ denote the representation of $(\CC,G,\varphi)$ in $\talg\DD$ satisfying $r^\DD = t^\DD \times u^\DD$, whose existence is guaranteed by Theorem \ref{integrated form}. Thus we have $r^\DD_{(\alpha, g)} =  t^\DD_\alpha u^\DD_g$ for all $(\alpha, g)\in \DD$.

a) We first remark that the triple $(\TT(\DD), t^\DD, u^\DD)$ is universal for representations of $(\CC,G,\varphi)$ in $C^*$-algebras: 
Suppose  $(T,U)$ is a representation of $(\CC,G,\varphi)$ in a $C^*$-algebra $B$. According to Theorem \ref{integrated form}, $T\times U$ is a nondegenerate representation of $\DD$ in $B$, so Lemma \ref{nd phi_T} gives that the associated homomorphism $\phi_{T\times U}\:\talg\DD \to B$ is nondegenerate. Moreover, we have
\begin{equation}\label{phi cross}
\phi_{T\times U}\circ t^\DD= T \righttext{and} \quad \phi_{T\times U}\circ u^\DD= U,
\end{equation}
and $\phi_{T\times U}$ is uniquely determined by these properties.
Indeed, 
for $\alpha \in \CC$ and $g\in G$ we have
\[\phi_{T\times U} \circ t^\DD_\alpha = \phi_{T\times U}(r^\DD_{(\alpha, 1)}) = (T\times U)_{(\alpha,1)}= T_\alpha \quad \text{and}\]
\begin{align*} 
\phi_{T\times U} \circ u_g^\DD &= \phi_{T\times U}\Big(\sum_{v\in \CC^0}r^\DD_{(v,g)}\Big) = \sum_{v\in \CC^0}\phi_{T\times U}(r^\DD_{(v,g)})\\
&=  \sum_{v\in \CC^0} (T\times U)_{(v,g)} =  \sum_{v\in \CC^0} T_v U_g = 1_{M(B)}U_g = U_g.
\end{align*}
Moreover, if $\phi\: \talg\DD \to B$ is also a homomorphism satisfying (\ref{phi cross}), then it is clear that $\phi$ agrees with  $\phi_{T\times U}$ on the range of $r^\DD$. As this range generates $\TT(\DD)$ as a $C^*$-algebra, we get that $\phi =\phi_{T\times U}$.

b) Next, we remark that $\TT(\DD)$ may be described as a $C^*$-blend of $\talg \CC$ and $C^*(G)$ in the sense of \cite{exelblend}:
Set $i := \phi_{t^\DD}\: \TT(\CC) \to \TT(\DD)$, and let $j\: C^*(G)\to M(\TT(\DD))$ denote the integrated form of the unitary representation $u^\DD$ of $G$ in $M(\TT(\DD))$. Both $i$ and $j$ are homomorphisms. Moreover,  let $i\ast j $ denote the linear map defined on the algebraic tensor product $\TT(\CC) \odot C^*(G)$, taking values in $M(\TT(\DD))$ and satisfying 
\[ (i\ast j) (a\otimes b) = i(a) j(b)\]
for all $a \in \TT(\CC)$ and $b \in C^*(G)$. Then the range of $i\ast j$ is contained and dense in $\TT(\DD)$. In fact, this range is equal to $\TT(\DD)$.

Indeed,  letting $g \mapsto \iota_g$ denote the canonical embedding of $G$ in $C^*(G)$,  we have 
\[ (i\ast j)(t_\alpha \otimes \iota_g) = \phi_{t^\DD}(t_\alpha) j(\iota_g) = t^\DD_\alpha u^\DD_g = r^\DD_{(\alpha, g)}\]
for all $\alpha \in \CC, g\in G$. It immediately follows that the range of $i\ast j$ is equal to $C^*(r^\DD)= \TT(\DD)$. This shows that the quintuple \[(\TT(\CC), C^*(G), i , j, \TT(\DD))\] is a $C^*$-blend in the sense of \cite{exelblend}.

c) Assume $\varphi(g,\alpha)=g$ for all $g\in G,\alpha\in\CC$, so $G\curvearrowright \CC$ by automorphisms of the category (cf.~Example \ref{trivialcocy}). We continue to use the notation introduced in a) and b). It follows readily from the universality of $(\talg \CC, t)$ 
that there exists an action $\beta$ 
of $G$ on $\talg \CC$ 
such that  
\[\beta_g(t_\alpha) = t_{g\alpha} 
\] for all $g \in G, \alpha \in \CC$.  
Moreover, Theorem \ref{integrated form} gives that $\TT(\DD) $ is isomorphic to the full $C^*$-crossed product $\talg \CC \rtimes_\beta G$. This can be formulated more precisely by saying that the triple $(\TT(\DD), i , u^\DD)$ is the full crossed product in the sense of \cite{rae:full} of the $C^*$-dynamical system $(\TT(\CC), G, \beta)$.
Indeed, note first that for all $\alpha \in \CC, g\in G$ we have 
\[i(\beta_g(t_\alpha)) = i (t_{g\alpha}) = t^\DD_{g\alpha} = u^\DD_g\, t^\DD_\alpha \,(u^\DD_g)^* = u^\DD_g\, i( t_\alpha)\, (u^\DD_g)^*.\]
This clearly implies that $(i, u^\DD)$ is a covariant homomorphism of the system $(\TT(\CC), G, \beta)$ in $M(\TT(\DD))$. Moreover, we have 
\[i(t_\alpha)j (\iota_g) = t^\DD_\alpha \, u^\DD_g =  r^\DD_{(\alpha, g)}\] for all  $\alpha \in \CC, g\in G$. Thus we see that the  span of $i(\TT(\CC)) j(C^*(G))$ is dense in $\TT(\DD)$.
Finally, if $(\pi, U) $ is a covariant representation of $(\TT(\CC), G, \beta)$ into $M(B)$, we can then let $T $ be the representation of $\CC$ given by $T= \pi\circ t $. Then $(T, U)$ is a representation of $(\TT(\CC), G, \varphi)$ in $M(D)$, and if we set $\phi_{\pi, U}:=\phi_{T\times U}\:\TT(\DD) \to M(D)$, we have 
\[ (\phi_{\pi, U} \circ i) \circ t = \phi_{T\times U} \circ t^\DD = T =  \pi\circ t, \]
hence $\phi_{\pi, U} \circ i = \pi$, and 
\[\phi_{\pi, U} \circ u^\DD = \phi_{T\times U}\circ u^\DD = U.\]
In particular, it follows that $i \times u^\DD$ is an isomorphism from $\talg \CC \rtimes_\beta G$ onto $\TT(\DD)$. 

\end{rem}

\begin{defn}\label{int cp}
A representation $(T,U)$ of a category system $(\CC,G,\varphi)$ in a $C^*$-algebra $B$ is \emph{covariant} if the representation $T\:\CC\to B$ is covariant in the sense of \defnref{spi rep}.
\end{defn}

\begin{cor}\label{integrated CP}
In \thmref{integrated form}, the representation $(T,U)$ of the category system $(\CC,G,\varphi)$ is covariant in the sense of \defnref{int cp} if and only if the representation $R=T\times U$ of $\DD$ is covariant in the sense of \defnref{spi rep}.
\end{cor}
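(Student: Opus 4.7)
The plan is to compare condition (\ref{spi 4}) of Definition \ref{spi rep} for $T$ at vertices of $\CC$ with the same condition for $R = T\times U$ at vertices of $\DD$, using the identification $\DD^0 = \CC^0 \times \{1\}$ together with two basic observations. First, $R_{(v,1)} = T_v U_1 = T_v$ for every $v \in \CC^0$. Second, and crucially, since $U_g$ is unitary in $M(B)$, for every $(\alpha, g) \in \DD$ we have
\[
R_{(\alpha,g)} R_{(\alpha,g)}^* = T_\alpha U_g U_g^* T_\alpha^* = T_\alpha T_\alpha^*,
\]
so the range projection depends only on $\alpha$, not on $g$. The bridge between finite exhaustive sets at $(v,1) \in \DD^0$ and at $v \in \CC^0$ is already provided by Lemma~\ref{exhaust preserved}.

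For the forward direction, assume $T$ is covariant. Fix $(v,1) \in \DD^0$ and let $F \subset (v,1)\DD$ be finite and exhaustive at $(v,1)$. Set
\[
H = \{\alpha \in v\CC : (\alpha,g) \in F \text{ for some } g \in G\}.
\]
Then $H$ is finite, and by Lemma~\ref{exhaust preserved} it is exhaustive at $v$. Using the observation above and covariance of $T$,
\[
\bigvee_{(\alpha,g) \in F} R_{(\alpha,g)} R_{(\alpha,g)}^* = \bigvee_{\alpha \in H} T_\alpha T_\alpha^* = T_v = R_{(v,1)}.
\]
This is exactly (\ref{spi 4}) for $R$ at $(v,1)$, so $R$ is covariant.

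For the reverse direction, assume $R$ is covariant. Fix $v \in \CC^0$ and let $H \subset v\CC$ be finite and exhaustive at $v$. Put $F = H \times \{1\} \subset (v,1)\DD$; this is finite, and exhaustive at $(v,1)$ by Lemma~\ref{exhaust preserved}. Then
\[
T_v = R_{(v,1)} = \bigvee_{(\alpha,1) \in F} R_{(\alpha,1)} R_{(\alpha,1)}^* = \bigvee_{\alpha \in H} T_\alpha T_\alpha^*,
\]
so $T$ satisfies (\ref{spi 4}) and is covariant. The only mildly subtle point is handling the possibility in the forward direction that distinct elements $(\alpha, g_1), (\alpha, g_2) \in F$ collapse to the same $\alpha \in H$, but this is harmless precisely because the range projections agree; no appeal to Lemma~\ref{cp preserved} is needed beyond this. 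I do not anticipate any real obstacle.
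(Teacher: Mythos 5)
Your proof is correct and follows essentially the same route as the paper: both directions hinge on Lemma~\ref{exhaust preserved} to translate finite exhaustive sets between $(v,1)\in\DD^0$ and $v\in\CC^0$, together with the fact that the range projection $R_{(\alpha,g)}R_{(\alpha,g)}^*$ depends only on $\alpha$. The only (harmless) cosmetic difference is that in the forward direction you derive this last fact directly from unitarity of $U_g$, whereas the paper first normalizes $F$ to $H\times\{1\}$ via Lemma~\ref{exhaust inv} and then invokes Lemma~\ref{cp preserved}.
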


\begin{proof}
First suppose that $(T,U)$ is covariant.
Let $(v,1)\in \DD^0$, and let $F \subseteq (v,1)\DD$ be a finite exhaustive set at $(v,1)$.
By \lemref{exhaust inv} and \lemref{exhaust preserved}, 
we may assume without loss of generality 
that  $F=H\times \{1\}$ for a finite subset $H\subset v\CC$
which is exhaustive at $v$.
Then by \lemref{cp preserved},
\begin{align*}
\bigvee_{(\alpha,1)\in F} R_{(\alpha,1)}R_{(\alpha,1)}^*
&=\bigvee_{\alpha\in H}T_\alpha T_\alpha^*
\\&=T_v
\\&=R_{(v,1)}
\end{align*}
and we have verified that $R$ is covariant.

Conversely, assume that $R$ is covariant.
Let $v\in \CC^0$, and let $H$ be a finite exhaustive set at $v$.
Let $F=H\times \{1\} \subseteq (v,1)\DD$, which by \lemref{exhaust preserved} is a finite exhaustive set at $(v,1)$.
Thus
\begin{align*}
T_v
&=R_{(v,1)}
\\&=\bigvee_{(\alpha,1)\in F}R_{(\alpha,1)}R_{(\alpha,1)}^*
\\&=\bigvee_{\alpha\in H}T_\alpha T_\alpha^*,
\end{align*}
and we have verified that $T$ is covariant,
and hence $(T,U)$ is covariant.
\end{proof}

\begin{rem} \label{OODD-prop} Consider a category system $(\CC,G,\varphi)$ where $\CC$ is finitely aligned and $\DD= \CC\rtimes^\varphi G$.  
The Cuntz-Krieger algebra $\OO(\DD)$ enjoys properties similar to
those described in  Remark \ref{TTDD-prop}. As these properties may be proven in the same way, now using also Corollary \ref{integrated CP},  we only state these below.

Let ${\tilde r}^\DD\:\DD \to \oalg\DD$ denote the universal covariant representation of $\DD$ and let $({\tilde t}^\DD, {\tilde u}^\DD)$ denote the covariant representation of $(\CC,G,\varphi)$ in $\oalg\DD$ satisfying ${\tilde r}^\DD= {\tilde t}^\DD \times {\tilde u}^\DD$, whose existence is guaranteed by Theorem \ref{integrated form} and Corollary \ref{integrated CP}. 
Then we have:

a) The triple $(\OO(\DD), {\tilde t}^\DD, {\tilde u}^\DD)$ is universal for representations of $(\CC,G,\varphi)$ in $C^*$-algebras. 

b) The quintuple $(\OO(\CC), C^*(G), \tilde i , \tilde j, \OO(\DD))$  is a $C^*$-blend in the sense of \cite{exelblend}, where $\tilde i:= \psi_{{\tilde t}^\DD}\: \OO(\CC) \to \OO(\DD)$ and  $\tilde j\: C^*(G)\to M(\OO(\DD))$ is the integrated form of the unitary representation ${\tilde u}^\DD$ of $G$ in $M(\OO(\DD))$.

c) Assume $\varphi(g,\alpha)=g$ for all $g\in G,\alpha\in\CC$, so $G\curvearrowright \CC$ by automorphisms of $\CC$. Let $\tilde \beta$ denote the action of $G$ on $\oalg \CC$ satisfying 
\[\tilde \beta_g({\tilde t}_\alpha) = {\tilde t}_{g\alpha} 
\] for all $g \in G, \alpha \in \CC$.  Then the triple $(\OO(\DD), \tilde i , {\tilde u}^\DD)$ is the full crossed product in the sense of \cite{rae:full} of the $C^*$-dynamical system $(\OO(\CC), G, \tilde \beta)$. In particular, $\tilde i \times {\tilde u}^\DD$ is an isomorphism from $\oalg \CC \rtimes_{\tilde\beta} G$ onto $\OO(\DD)$.
\end{rem}

In connection with Remark \ref{OODD-prop} c), we note that  certain actions  of $\mathbb{Z}^l$ by automorphisms on a finitely aligned $k$-graph $\Lambda$ are considered in \cite{FPS}. The authors construct a $(k+l)$-graph $\Lambda \rtimes \mathbb{Z}^l$ and show that the Cuntz-Krieger algebra of $\Lambda \rtimes \mathbb{Z}^l$ is isomorphic to the full crossed product of the Cuntz-Krieger algebra of $\Lambda$ by the induced action of $\mathbb{Z}^l$. Since a higher-rank graph is a category of paths we may regard the result in Remark \ref{OODD-prop} c) as a generalization of their result to finitely aligned category systems.

\section{Application to Exel-Pardo systems}\label{EP algebras}

Throughout this section, $(E,G,\varphi)$ denotes a given (discrete) Exel-Pardo system \cite{EP, bkqexelpardo}. 
So $E= (E^0, E^1, r, s)$ is a 
 directed graph \cite{raeburngraph},
$G$ is a discrete group acting on $E$ by automorphisms,
and $\varphi\:G\times E^1\to G$ is a graph cocycle,
i.e., it is a cocycle for the action of $G$ on the set of edges $E^1$ that also satisfies \eqref{source-inv}.

Let $A=c_0(E^0)$,
and let $X$ be the $C^*$-correspondence over $A$ whose Cuntz-Pimsner algebra $\OO_X$ is canonically isomorphic to the graph algebra $C^*(E)$,
as in \cite[Example~8.13]{raeburngraph}.
Let $B=A\rtimes G$ be the crossed product by the action of $G$ on the vertices.
\cite[Section~3]{bkqexelpardo}
introduced a $C^*$-correspondence $\yphi$ over $B$
that is a kind of ``twisted crossed product'' of the graph correspondence $X$ by the action of $G$,
where the cocycle $\varphi$ provides the ``twist''.
The $B$-correspondence $\yphi$
is modelled
on the 
correspondence $M$ of \cite[Section~10]{EP}
(although in \cite{EP} the graph is assumed to be finite and have no sources);
in fact, \cite[Theorem~6.1]{bkqexelpardo} proves that
these correspondences are isomorphic, and hence
the Cuntz-Pimsner algebra $\oy$ is isomorphic to the $C^*$-algebra that Exel and Pardo denoted by $\OO_{G,E}$
(and which we call the \emph{Exel-Pardo algebra}).

Let $E^*$ denote the (finite) path category of $E$.
Then $E^*$ is singly aligned and cancellative,
and has no inverses,
and
in particular
is a category of paths in the sense of \cite{Spi11}.
As we mentioned in \exref{EP ex}, the cocycle extends uniquely to a category cocycle,
which we continue to denote by $\varphi$,
for the action of $G$ on $E^*$, making $(E^*,G,\varphi)$ a category system.

We need to
relate representations of the Exel-Pardo system $(E,G,\varphi)$
to representations of the category system $(E^*,G,\varphi)$.
For this purpose, we want to
extend
\cite[Definition~5.1]{bkqexelpardo},
as we explain below.

First we briefly review the Toeplitz algebra of the directed graph $E$.
For $v\in E^0$ let $\delta_v$ denote the characteristic function of $\{v\}$, so that the commutative $C^*$-algebra $c_0(E^0)$ is generated by the pairwise orthogonal family of projections $\{\delta_v:v\in E^0\}$.
On the other hand, for $e\in E^1$ let $\Chi_e$ denote the characteristic function of $\{e\}$, so that $c_c(E^1)$ is the 
vector space with basis $\{\Chi_e:e\in E^1\}$.
Then $c_c(E^1)$ is a pre-correspondence over $c_0(E^0)$ with operations determined by
\begin{align*}
\Chi_e\delta_v&=\begin{cases}\Chi_e\case v=s(e)\\0\case v\ne s(e)\end{cases}
\\
\delta_v\Chi_e&=\begin{cases}\Chi_e\case v=r(e)\\0\case v\ne r(e)\end{cases}
\\
\<\Chi_e,\Chi_f\>_{c_0(E^0)}&=\begin{cases}\delta_{s(e)}\case e=f\\0\case e\ne f.\end{cases}
\end{align*}
Then the completion of this pre-correspondence is the $C^*$-correspondence $X(E)$ over $c_0(E^0)$.
A \emph{Toeplitz representation}
(or just a \emph{representation}; see \cite{katsuracorrespondence})
of $X(E)$ in a $C^*$-algebra $B$ is a pair $(\psi,\pi)$, where $\psi\:X(E)\to B$ is a linear map and $\pi\:c_0(E^0)$ is a homomorphism such that for $\xi,\eta\in X(E)$ and $a\in c_0(E^0)$,
\begin{align*}
\psi(\xi a)&=\psi(\xi)\pi(a)\\
\psi(a\xi)&=\pi(a)\psi(\xi)\\
\pi(\<\xi,\eta\>_{c_0(E^0)})&=\psi(\xi)^*\psi(\eta).
\end{align*}
A \emph{Toeplitz $E$-family}
(or a \emph{Toeplitz-Cuntz-Krieger $E$-family}; see \cite{simscouniversal})
in $B$ is a pair $(P,S)$, where $P\:E^0\to B$ and $S\:E^1\to B$ are maps such that
$\{P_v\}_{v\in E^0}$ is a family of pairwise orthogonal projections,
$S_e^*S_e=P_{s(e)}$ for all $e\in E^1$,
and $\sum_{r(e)=v}S_eS_e^*\le P_v$ for all $v\in E^0$ 
such that  $r^{-1}(\{v\})$ is a finite nonempty subset of $E^1$. 

The Toeplitz representations $(\psi,\pi)$ of $X(E)$ are in bijective correspondence with the Toeplitz $E$-families $(P,S)$
in $B$ via $P_v=\pi(\delta_v)$ and $S_e=\psi(\Chi_e)$,
and also are in bijective correspondence with
the representations $T$ of $E^*$ in $B$
via $P=T|_{E^0}$ and $S=T|_{E^1}$.
Thus the Toeplitz algebra $\talg{E^*}$ of the path category $E^*$,
which by definition is generated by a universal representation of $E^*$,
can be characterized as
the $C^*$-algebra, often denoted by $\TT_{X(E)}$, generated by a universal Toeplitz representation of the $c_0(E^0)$-correspondence $X(E)$. It can also be characterized as
the $C^*$-algebra, sometimes denoted by $\TT(E)$,
generated by a universal Toeplitz $E$-family.
$\TT_{X(E)}$ is often called the \emph{Toeplitz algebra} of the correspondence $X(E)$,
and $\talg E$ is sometimes called the \emph{Toeplitz-Cuntz-Krieger algebra} of the graph $E$.

According to \defnref{nd},
a representation
$T$ of $E^*$ in $B$ is called nondegenerate if the series $\sum_{v\in E^0}T_v$ converges strictly to $1_{M(B)}$,
which by \lemref{nd phi_T} is equivalent to nondegeneracy of the associated homomorphism $\phi_T\:\TT(E^*)\to B$.

\begin{defn}\label{nd E}
We call a Toeplitz $E$-family $(P,S)$ in $B$ \emph{nondegenerate} if the series $\sum_{v\in E^0}P_v$ converges strictly to $1_{M(B)}$.
\end{defn}

It follows from the above discussion that a representation $T$ of $E^*$ is nondegenerate in the sense of \defnref{nd} if and only if the associated Toeplitz $E$-family $(P,S)$ is nondegenerate in the sense of \defnref{nd E}.

Returning to our Exel-Pardo system $(E,G,\varphi)$,
we extend \cite[Definition~5.1]{bkqexelpardo} as follows:

\begin{defn}\label{rep EP}
A \emph{representation} of 
$(E,G,\varphi)$ in a $C^*$-algebra $B$ is a triple $(P,S,U)$,
where $(P,S)$ is a nondegenerate Toeplitz $E$-family in $B$,
$U\:G\to M(B)$ is a unitary homomorphism,
and for all $g\in G$, $v\in E^0$, and $e\in E^1$ we have
\begin{equation}\label{rep EP conditions}
\begin{split}
U_gP_v&=P_{gv}U_g\\
U_gS_e&=S_{ge}U_{\varphi(g,e)}.
\end{split}
\end{equation}
We define the $C^*$-algebra $C^*(P,S,U)$ associated to $(P,S,U)$ to be the $C^*$-subalgebra of $B$ generated by the set 
\[
\{P_vU_g:v\in E^0,g\in G\}\cup \{S_eU_g:e\in E^1,g\in G\}.
\]
\end{defn}
In \cite[Definition~5.1]{bkqexelpardo} the nondegeneracy of $(P,S)$ is replaced by the condition that $C^*(P,S,U) =B$. 
But we now think that this latter condition is too strong,
as indicated by the following lemma:

\begin{lem}\label{EP iv}
Let
$\DD=E^*\rtimes^\varphi G$ be the Zappa-Sz\'ep product
of the 
category system $(E^*,G,\varphi)$,
and let $R$ be a representation of $\DD$ in a $C^*$-algebra $B$.
Let $(T,U)$ be the representation of $(E^*,G,\varphi)$ associated to $R$,
and let $(P,S)$ be the Toeplitz $E$-family in $B$ associated to the representation $T$ of $E^*$.
Then the associated homomorphism $\phi_R\:\talg \DD\to B$ 
is surjective
if and only if $C^*(P,S,U) =B$.
\end{lem}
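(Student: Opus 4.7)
My plan is to unpack the definitions on both sides and identify the two $C^*$-subalgebras of $B$ explicitly.

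First, I would observe that since $\phi_R$ is a homomorphism and $R$ is the universal representation composed with $\phi_R$, the image $\phi_R(\talg\DD)$ equals the $C^*$-subalgebra $C^*(R)$ of $B$ generated by the range of $R$. Thus $\phi_R$ is surjective if and only if $C^*(R)=B$. By \thmref{integrated form}, $R_{(\alpha,g)}=T_\alpha U_g$ for all $(\alpha,g)\in\DD$, so
\[
C^*(R)=C^*\bigl(\{T_\alpha U_g:\alpha\in E^*,\ g\in G\}\bigr).
\]
With this reformulation, the lemma reduces to proving $C^*(R)=C^*(P,S,U)$, and the rest is a routine bookkeeping argument using the identifications $P_v=T_v$ for $v\in E^0$ and $S_e=T_e$ for $e\in E^1$.

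For the inclusion $C^*(P,S,U)\subseteq C^*(R)$, every generator $P_vU_g=T_vU_g=R_{(v,g)}$ and every generator $S_eU_g=T_eU_g=R_{(e,g)}$ lies in the range of $R$, hence in $C^*(R)$.

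For the reverse inclusion, I would first observe that taking $g=1$ shows that each $P_v=P_vU_1$ and each $S_e=S_eU_1$ lies in $C^*(P,S,U)$. Now given $\alpha=e_1\cdots e_n\in E^*$ with $n\geq 1$, repeated application of axiom \ref{spi 2} gives $T_\alpha=T_{e_1}\cdots T_{e_n}=S_{e_1}\cdots S_{e_n}\in C^*(P,S,U)$, and of course $T_v=P_v\in C^*(P,S,U)$ for $v\in E^0$. Using axioms \ref{spi 1} and \ref{spi 2} in the form $T_\alpha T_{s(\alpha)}=T_\alpha$, i.e.\ $T_\alpha P_{s(\alpha)}=T_\alpha$, I can write
\[
T_\alpha U_g = T_\alpha\bigl(P_{s(\alpha)}U_g\bigr),
\]
which exhibits $T_\alpha U_g$ as a product of elements of $C^*(P,S,U)$. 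Hence $C^*(R)\subseteq C^*(P,S,U)$, completing the identification.

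There is really no serious obstacle here; the only point worth being careful about is not inadvertently using the strict sum $\sum_v P_v=1$ to absorb $U_g$ into $C^*(P,S,U)$, since that sum converges only in $M(B)$ and its tail need not lie in $C^*(P,S,U)$. The trick $T_\alpha U_g=T_\alpha P_{s(\alpha)}U_g$ sidesteps this by multiplying $U_g$ against a single $P_{s(\alpha)}$ already available as a generator.
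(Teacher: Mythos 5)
Your proof is correct and follows essentially the same route as the paper: identify $\phi_R(\talg\DD)$ with $C^*(R)$, use $R_{(\alpha,g)}=T_\alpha U_g$, and observe that each $T_\alpha$ factors as a product of the $P_v$ and $S_e$. The only difference is that you spell out the one step the paper leaves implicit — your device $T_\alpha U_g=T_\alpha\bigl(P_{s(\alpha)}U_g\bigr)$ (or equivalently absorbing $U_g$ into the last edge generator) is exactly what is needed to justify the paper's closing ``the lemma follows,'' and your caution about not invoking the strict sum $\sum_v P_v=1$ is well placed.
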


\begin{proof}
Note that $\phi_R(\talg \DD)$ is generated as a $C^*$-algebra by
\[
\{R_{(\alpha,g)}:(\alpha,g)\in \DD\},
\]
and that
\[
R_{(\alpha,g)}=T_\alpha U_g.
\]
By definition of the path category $E^*$,
for every $\alpha\in E^*$ the element $T_\alpha$ is a finite product of elements of the form $P_v$ and $S_e$ for $v\in E^0$ and $e\in E^1$.
Thus $\phi_R(\talg \DD)$ coincides with the $C^*$-subalgebra of $B$ generated by
\[
\{P_vU_g:v\in E^0,g\in G\}\cup \{S_eU_g:e\in E^1,g\in G\},
\]
and the lemma follows.
\end{proof}

The
following theorem
is 
analogous to \cite[Theorem~5.2]{bkqexelpardo},
but for representations of $(E,G,\varphi)$
in the sense of \defnref{rep EP}
rather than the sense of \cite[Definition~5.1]{bkqexelpardo}.

\begin{thm}\label{bkqgenrel}
There is a bijective correspondence between
representations $(P,S,U)$ of $(E,G,\varphi)$ in a $C^*$-algebra $B$
and nondegenerate representations $R$ of $\DD$ in $B$,
given as follows:
\begin{enumerate}
\item
If $(P,S,U)$ is a representation of $(E,G,\varphi)$ in $B$,
then the Toeplitz $E$-family $(P,S)$ in $B$
determines a nondegenerate representation $T\:E^*\to B$,
and $(T,U)$ is a representation of the category system $(E^*,G,\varphi)$ in $B$,
which determines a nondegenerate representation $R= T\times U\:\DD\to B$.

\item
If $R\:\DD\to B$ is a nondegenerate representation,
then letting $(T,U)$ be the associated representation of $(E^*,G,\varphi)$ in $B$ as in \thmref{integrated form},
we let $(P,S)$ be the Toeplitz $E$-family in $B$ associated to $T$,
and then $(P,S,U)$ is a representation of $(E,G,\varphi)$ in $B$.
\end{enumerate}
\end{thm}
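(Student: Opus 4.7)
The plan is to reduce the theorem to \thmref{integrated form} applied to the category system $(E^*,G,\varphi)$, combined with the standard bijective correspondence between nondegenerate Toeplitz $E$-families $(P,S)$ in $B$ and nondegenerate representations $T\:E^*\to B$ (via $P=T|_{E^0}$, $S=T|_{E^1}$) that was recalled in the discussion preceding \defnref{nd E}. Once these two bijections are concatenated, direction (i) of the theorem is precisely the composition (extend $(P,S)$ to $T$, then form $T\times U$), and direction (ii) is the reverse composition. Thus the only point requiring genuine verification is that the intermediate pair $(T,U)$ really does satisfy the defining relation of a representation of the category system $(E^*,G,\varphi)$ in the sense of \defnref{covariant}, namely
\[
U_gT_\alpha = T_{g\alpha}U_{\varphi(g,\alpha)}\righttext{for all}g\in G,\alpha\in E^*.
\]

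For direction (i), the hypotheses \eqref{rep EP conditions} give this identity only for $\alpha\in E^1$, and property (\ref{coc 1}) reduces the vertex case $\alpha=v\in E^0$ to $U_gP_v=P_{gv}U_g$, which is also in \eqref{rep EP conditions}. For a general path, I would proceed by induction on length: write $\alpha = e\beta$ with $e\in E^1$ and $\beta\in E^*$ strictly shorter, and compute
\begin{align*}
U_gT_\alpha
&= U_gT_eT_\beta = T_{ge}U_{\varphi(g,e)}T_\beta = T_{ge}T_{\varphi(g,e)\beta}U_{\varphi(\varphi(g,e),\beta)}\\
&= T_{(ge)(\varphi(g,e)\beta)}U_{\varphi(\varphi(g,e),\beta)} = T_{g\alpha}U_{\varphi(g,\alpha)},
\end{align*}
where the last equality uses the category cocycle axioms (\ref{coc 3}) and (\ref{coc 4}). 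With $(T,U)$ now a bona fide representation of $(E^*,G,\varphi)$, \thmref{integrated form} supplies the desired nondegenerate representation $R = T\times U$ of $\DD$.

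For direction (ii), starting from a nondegenerate $R\:\DD\to B$, \thmref{integrated form} delivers a representation $(T,U)$ of $(E^*,G,\varphi)$ with $T$ nondegenerate. Restricting $T$ to $E^0\cup E^1$ yields a Toeplitz $E$-family $(P,S)$ that is nondegenerate in the sense of \defnref{nd E}, and the relations \eqref{rep EP conditions} are read off as the special cases $\alpha\in E^0$ (using (\ref{coc 1})) and $\alpha\in E^1$ of the cocycle identity for $(T,U)$. That the two constructions are mutually inverse then follows at once from the bijectivity in \thmref{integrated form} together with the restriction/extension bijection between Toeplitz $E$-families and representations of $E^*$. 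The only real obstacle in the whole argument is the inductive extension of the commutation relation from edges to arbitrary paths in direction (i); the category cocycle axioms (\ref{coc 1})--(\ref{coc 4}) are tailor-made for exactly this propagation.
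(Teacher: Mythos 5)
Your proposal is correct and follows essentially the same route as the paper's proof: both reduce the statement to \thmref{integrated form} via the standard bijection between nondegenerate Toeplitz $E$-families and nondegenerate representations of $E^*$. The only difference is that you explicitly carry out, by induction on path length using the cocycle axioms \coc{coc 3} and \coc{coc 4}, the verification that the paper dismisses as ``routine to check''.
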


\begin{proof}
By \defnref{rep EP}, a representation $(P,S,U)$ of $(E,G,\varphi)$ in $B$
is a nondegenerate Toeplitz $E$-family $(P,S)$ in $B$
and a unitary homomorphism
$U\:G\to M(B)$
that interacts with $(P,S)$ as in \eqref{rep EP conditions}.
Letting $T\:E^*\to B$ be the associated representation of the path category $E^*$,
it is routine to check that the conditions \eqref{rep EP conditions} imply that $(T,U)$ is a representation of $(E^*,G,\varphi)$ in the sense of \defnref{covariant},
which by \thmref{integrated form} determines a unique nondegenerate representation $R= T\times U$ of $\DD$ in $B$ such that
\[
R_{(\alpha,g)}=T_\alpha U_g \righttext{for every }(\alpha,g)\in\DD.
\]

Conversely, suppose that $R\:\DD\to B$ is a nondegenerate representation,
let $(T,U)$ be the associated representation of $(E^*,G,\varphi)$ in $B$ as in \thmref{integrated form},
and then let $(P,S)$ be the Toeplitz $E$-family in $B$ associated to the nondegenerate representation $T\:E^*\to B$.
It follows from \defnref{covariant} that $(P,S,U)$ is a representation of $(E,G,\varphi)$ as in \defnref{rep EP}.

Finally, the bijective correspondence between nondegenerate representations $T\:E^*\to B$ and Toeplitz $E$-families $(P,S)$ in $B$
obviously gives a bijective correspondence between representations $(T,U)$ of $(E^*,G,\varphi)$ and representations $(P,S,U)$ of $(E,G,\varphi)$.
Combining this with the bijective correspondence between $(T,U)$ and $R$ finishes the proof.
\end{proof}

As an immediate consequence of Theorem \ref{bkqgenrel} and \cite[Theorem~5.2]{bkqexelpardo}, we get:
\begin{cor}
Let $(p,s,u)$ denote the representation of $(E,G,\varphi)$ associated to the universal representation of $\DD= E^*\rtimes^\varphi G$ in its 
 Toeplitz algebra $\talg \DD$. Then $\talg \DD = C^*(p,s,u)$, and the pair  $\big(\talg \DD, (p,s, u)\big)$ is universal for representations of 
 $(E,G,\varphi)$ in $C^*$-algebras in the following sense: 
 
 Given a representation $(P,S,U)$ of  $(E,G,\varphi)$ in $B$, there exists a unique nondegenerate homomorphism $\phi_{P, S, U}\: \talg \DD \to B$ such that 
 \[ P=\phi_{P, S, U} \circ p\,, \righttext{} S= \phi_{P, S, U} \circ s\,,\righttext{and} \, \, \phi_{P, S, U}\circ u = U.\] 
In particular, $\talg\DD$ is isomorphic to the Toeplitz algebra $\TT_{Y^\varphi}$ of the $C^*$-correspondence $Y^\varphi$ associated to $(E, G, \varphi)$ in \cite{bkqexelpardo}.
\end{cor}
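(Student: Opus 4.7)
The plan is to assemble Theorem \ref{bkqgenrel}, Lemma \ref{EP iv}, Lemma \ref{nd phi_T}, the universal property of $\talg\DD$ from Definition \ref{spi toep}, and the universal property of $\TT_{Y^\varphi}$ supplied by \cite[Theorem~5.2]{bkqexelpardo}. First I would fix notation: write $r^\DD\:\DD\to\talg\DD$ for the universal representation, and let $(t^\DD,u^\DD)$ be the corresponding representation of $(E^*,G,\varphi)$ in $\talg\DD$ as in Remark \ref{TTDD-prop}. Applying Theorem \ref{bkqgenrel}(ii) to $r^\DD$ then produces $(p,s,u)$ with $p=t^\DD|_{E^0}$, $s=t^\DD|_{E^1}$, and $u=u^\DD$. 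Because the homomorphism $\talg\DD\to\talg\DD$ associated with $r^\DD$ is the identity, it is surjective, so Lemma \ref{EP iv} instantly yields $C^*(p,s,u)=\talg\DD$.

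For the existence half of the universal property, I would start from an arbitrary representation $(P,S,U)$ of $(E,G,\varphi)$ in a $C^*$-algebra $B$ and invoke Theorem \ref{bkqgenrel}(i) to obtain the associated nondegenerate representation $R=T\times U\:\DD\to B$. Definition \ref{spi toep} then supplies a unique homomorphism $\phi_R\:\talg\DD\to B$ with $\phi_R\circ r^\DD=R$, and Lemma \ref{nd phi_T} guarantees that $\phi_R$ is nondegenerate. Setting $\phi_{P,S,U}:=\phi_R$, I would verify the three identities by unwinding the correspondences: on $E^0$ and $E^1$ one gets $\phi_R\circ p=T|_{E^0}=P$ and $\phi_R\circ s=T|_{E^1}=S$; on $G$, applying the strict extension of $\phi_R$ to $M(\talg\DD)$ to the strict sum $u^\DD_g=\sum_{v\in E^0}r^\DD_{(v,g)}$ of Theorem \ref{integrated form} yields $\phi_R\circ u=U$.

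Uniqueness is the step that demands the most care and is where I expect the main obstacle. Given any nondegenerate homomorphism $\phi\:\talg\DD\to B$ satisfying the three identities, $(\phi\circ t^\DD,\phi\circ u^\DD)$ is a representation of $(E^*,G,\varphi)$ that agrees with $(T,U)$ on $E^0\cup E^1$ and on $G$. Since every $t^\DD_\alpha$ with $\alpha\in E^*$ is a finite product of the generators $p_v$ and $s_e$ (by the very definition of the path category), agreement on these generators propagates to agreement on all of $t^\DD$. Hence $\phi\circ r^\DD=\phi\circ(t^\DD\times u^\DD)=T\times U=R$, and uniqueness in Definition \ref{spi toep} forces $\phi=\phi_R=\phi_{P,S,U}$.

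Finally, for the last assertion, \cite[Theorem~5.2]{bkqexelpardo} shows that $\TT_{Y^\varphi}$ carries a representation of $(E,G,\varphi)$ universal in exactly the sense just established for $\talg\DD$. Applying each of the two universal properties to the universal triple for the other algebra produces mutually inverse nondegenerate homomorphisms that assemble into the desired isomorphism $\talg\DD\cong\TT_{Y^\varphi}$. The remaining work is routine translation between the languages of $\DD$-representations and $(E,G,\varphi)$-representations, and I do not anticipate any hidden subtlety beyond the careful strict-extension bookkeeping that already appears in Theorem \ref{integrated form}.
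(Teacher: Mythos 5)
Your proposal is correct and follows essentially the same route as the paper, which states this corollary as an immediate consequence of Theorem \ref{bkqgenrel} and \cite[Theorem~5.2]{bkqexelpardo}; you have simply filled in the details (Lemma \ref{EP iv} for $C^*(p,s,u)=\talg\DD$, Lemma \ref{nd phi_T} for nondegeneracy, and the strict-extension computation for $\phi_{P,S,U}\circ u=U$, exactly as in Remark \ref{TTDD-prop}). The only point worth flagging is the minor translation between \defnref{rep EP} and \cite[Definition~5.1]{bkqexelpardo} in the final step, which you correctly identify as routine.
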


Now we want to pass to the Cuntz-Krieger algebra $\oalg \DD$,
and we pause to briefly review the Cuntz-Krieger algebra of the directed graph $E$, see~\cite{raeburngraph} for more details. 

\emph{We assume from now on that $E$ is row-finite, i.e., $r^{-1}(\{v\})$ is a finite subset of $E^1$ for every vertex $v \in E^0$.}
Let $A=c_0(E^0)$, and let $X=X(E)$ be the graph correspondence, which is a $C^*$-correspondence over $A$, and identify the Toeplitz-Cuntz-Krieger algebra of $E$ with the Toeplitz algebra $\talg{E^*}$ of the path category $E^*$.
A Toeplitz $E$-family $(P,S)$ in a $C^*$-algebra $B$
satisfies the \emph{Cuntz-Krieger condition},
and $(P,S)$ is called a \emph{Cuntz-Krieger $E$-family},
if
for every vertex $v$ that is not a source (i.e., $r^{-1}(\{v\}) \neq \varnothing$)
\[
P_v=\sum_{r(e)=v}S_eS_e^*.
\]
The \emph{Katsura ideal} of $c_0(E^0)$ is
\[
J_X=\clspn\{\delta_v:v\in r(E^1)\},
\]
and is the largest ideal of $c_0(E^0)$ on which the left-module homomorphism
\[
\phi\:A\to \KK(X)
\]
is injective.
(Note that $\phi(A)\subset \KK(X)$ since $E$ is row-finite.)

Every Toeplitz representation $(\psi,\pi)$ of $X$ in $B$
uniquely determines a homomorphism $\psi^{(1)}\:\KK(X)\to B$ by
\[
\psi^{(1)}(\theta_{x,y})=\psi(x)\psi(y)^*,
\]
where $\theta_{x,y}$ is the (generalized) rank-one operator on $X$ given by
\[\theta_{x,y}(z)=x\,\<y,z\>_A\]
for $x, y, z \in X$.
A Toeplitz representation $(\psi,\pi)$ of $X$ in $B$ is \emph{Cuntz-Pimsner covariant} if
\[
\pi(f)=\psi^{(1)}\circ\phi(f)\righttext{for all}f\in J_X.
\]
The Toeplitz representation $(\psi,\pi)$ of $X$ associated to a Toeplitz $E$-family $(P,S)$
is Cuntz-Pimsner covariant if and only if $(P,S)$ satisfies the Cuntz-Krieger condition,
equivalently the associated representation $T$ of the path category $E^*$ is covariant in the sense of \defnref{spi rep}.
Thus the Cuntz-Krieger algebra $\oalg{E^*}$ of the path category $E^*$,
which by definition is generated by a universal covariant representation of $E^*$,
can be characterized as the $C^*$-algebra, denoted by $\OO_X$,
generated by a universal Cuntz-Pimsner covariant representation of the $A$-correspondence $X$,
and can also be characterized as the $C^*$-algebra, often denoted by $C^*(E)$,
generated by a universal Cuntz-Krieger $E$-family.
$\OO_X$ is called the \emph{Cuntz-Pimsner algebra} of the correspondence $X$,
and $C^*(E)$ is usually just called the $C^*$-algebra of the graph $E$, or the \emph{graph algebra} of $E$.

Now we return to our Exel-Pardo system $(E,G,\varphi)$.

\begin{defn}\label{PSU cov}
Assume $E$ is row-finite.
A representation $(P,S,U)$ of $(E,G,\varphi)$ in a $C^*$-algebra $B$ is \emph{covariant}
if $(P,S)$ is a Cuntz-Krieger $E$-family.
\end{defn}

The
following corollary is 
analogous to
 \cite[Corollary~5.4]{bkqexelpardo},
but for representations of $(E,G,\varphi)$
in the sense of \defnref{rep EP}
rather than the sense of \cite[Definition~5.1]{bkqexelpardo}.

\begin{cor}\label{bkqgenrelcp}
Assume $E$ is row-finite.
In \thmref{bkqgenrel}, the representation $R$ of $\DD$
is covariant if and only if
the representation
$(P,S,U)$ of $(E,G,\varphi)$ is covariant.
\end{cor}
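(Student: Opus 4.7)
The plan is to reduce immediately to Corollary~\ref{integrated CP}. By that corollary, $R=T\times U$ is covariant in the sense of Definition~\ref{spi rep} if and only if $T$ is. Since covariance of $(P,S,U)$ in the sense of Definition~\ref{PSU cov} depends only on $(P,S)$, and the bijection between nondegenerate representations $T$ of $E^*$ and nondegenerate Toeplitz $E$-families $(P,S)$ does not involve $U$, it suffices to prove: for row-finite $E$, a representation $T$ of the path category $E^*$ satisfies \(\ref{spi 4}\) of Definition~\ref{spi rep} if and only if the associated Toeplitz $E$-family $(P,S)$ is a Cuntz-Krieger $E$-family.

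For the forward implication, I would fix any non-source $v\in E^0$ and consider $F_v:=r\inv(\{v\})\subseteq vE^1$, which is finite by row-finiteness and clearly exhaustive at $v$: given $\alpha\in vE^*$, either $\alpha=v$, in which case any $e\in F_v$ satisfies $\alpha E^*\cap eE^*\supseteq eE^*\ne\varnothing$, or $\alpha$ begins with some edge of $F_v$. For distinct $e,f\in F_v$, unique factorization of paths forces $eE^*\cap fE^*=\varnothing$, so \(\ref{spi 3}\) makes the range projections $S_eS_e^*$ pairwise orthogonal. Applying \(\ref{spi 4}\) to $F_v$ then collapses the join to a sum, yielding the Cuntz-Krieger identity $P_v=\sum_{r(e)=v}S_eS_e^*$.

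For the reverse implication, I would fix a finite exhaustive $F\subseteq vE^*$ and set $N=\max\{|\alpha|:\alpha\in F\}$. Iterating the Cuntz-Krieger relation from $v$ down to depth $N$ gives
\[
P_v=\sum_{\gamma\in G_N(v)}T_\gamma T_\gamma^*,
\]
where $G_N(v)$ is the (finite, by row-finiteness) set consisting of all $\gamma\in vE^*$ of length exactly $N$ together with those of length strictly less than $N$ whose source is a source of $E$. The key combinatorial step is to observe that every $\gamma\in G_N(v)$ has some $\alpha\in F$ as a prefix: exhaustiveness supplies $\alpha\in F$ with $\alpha E^*\cap\gamma E^*\ne\varnothing$, which in the category of paths $E^*$ forces one of $\alpha,\gamma$ to extend the other, and the constraints $|\alpha|\le N$ together with (when $|\gamma|<N$) the absence of proper extensions of $\gamma$ rule out the possibility that $\gamma$ is a proper prefix of $\alpha$. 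Iterating the Cuntz-Krieger relation from each $\alpha\in F$ similarly gives $T_\alpha T_\alpha^*=\sum\{T_\gamma T_\gamma^*:\gamma\in G_N(v),\,\alpha\text{ a prefix of }\gamma\}$. Distinct members of $G_N(v)$ are never prefixes of one another, so the corresponding range projections are pairwise orthogonal by \(\ref{spi 3}\); hence the join over $\alpha\in F$ coincides with the full sum, and
\[
\bigvee_{\alpha\in F}T_\alpha T_\alpha^*=\sum_{\gamma\in G_N(v)}T_\gamma T_\gamma^*=P_v=T_v,
\]
verifying \(\ref{spi 4}\).

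The main obstacle I anticipate is the bookkeeping in the reverse direction: handling the possible termination of the iteration at sources of $E$ and verifying that $G_N(v)$ indexes a pairwise orthogonal family summing to $P_v$. Row-finiteness is used essentially at each step, ensuring finiteness of $G_N(v)$ and of the intermediate sums expanding $P_v$ and $T_\alpha T_\alpha^*$.
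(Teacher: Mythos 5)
Your proof is correct and follows the same route as the paper: both reduce the statement to \corref{integrated CP} (covariance of $R$ is equivalent to covariance of $T$) and then invoke the equivalence between covariance of the representation $T$ of the path category $E^*$ and the Cuntz-Krieger condition on the Toeplitz $E$-family $(P,S)$. The only difference is that the paper cites this last equivalence as a standard fact reviewed earlier in the section, whereas you supply a complete (and correct) proof of it via the exhaustive set $r\inv(\{v\})$ in one direction and the depth-$N$ orthogonal decomposition of $P_v$ in the other.
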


\begin{proof}
Let $(T,U)$ be the representation of the category system $(E^*,G,\varphi)$ associated to $R$ as in \thmref{integrated form}.
By \corref{integrated CP}, 
$R$ is covariant
if and only if 
$(T,U)$ is covariant,
meaning, by \defnref{int cp}, that $T$ is covariant,
which we mentioned above is equivalent to
$(P,S)$ being a Cuntz-Krieger $E$-family.
On the other hand, by \defnref{PSU cov}, to say that $(P,S,U)$ is covariant means that $(P,S)$ is a Cuntz-Krieger $E$-family.
Thus the current corollary is just a combination of these results and definitions.
\end{proof}
It follows now readily from Corollary \ref{bkqgenrelcp} and  \cite[Corollary~5.4]{bkqexelpardo} that we have:

\begin{cor} \label{ZZ-OO-EP}
Assume $E$ is row-finite.
Let $(\tilde p,\tilde s, \tilde u)$ denote the covariant representation of the system $(E,G,\varphi)$ associated to the universal covariant representation of $\DD= E^*\rtimes^\varphi G$ in $\oalg \DD$. Then $\oalg \DD = C^*(\tilde p,\tilde s, \tilde u)$, and the pair  $\big(\oalg \DD, (\tilde p, \tilde s, \tilde u)\big)$ is universal for covariant representations of 
 $(E,G,\varphi)$ in $C^*$-algebras in the following sense: 

 Given a covariant representation $(P,S,U)$ of  $(E,G,\varphi)$ in $B$, there exists a unique nondegenerate homomorphism $\psi_{P, S, U}\: \oalg \DD \to B$ such that 
 \[ P=\psi_{P, S, U} \circ \tilde p\,, \righttext{} S= \psi_{P, S, U} \circ \tilde s\,,\righttext{and} \, \, \psi_{P, S, U}\circ \tilde u = U.\] 
In particular, $\oalg\DD$ is isomorphic to the Exel-Pardo algebra $\OO_{Y^\varphi}$ associated to $(E, G, \varphi)$ in \cite{bkqexelpardo}.
 \end{cor}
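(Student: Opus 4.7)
The plan is to derive this corollary as a direct consequence of Corollary~\ref{bkqgenrelcp}, the universal property of $\OO(\DD)$, and the known universal property of $\OO_{Y^\varphi}$ from \cite[Corollary~5.4]{bkqexelpardo}. There should be no essentially new computation required; the work is really just unpacking what the preceding bijective correspondence tells us after passing to the Cuntz-Krieger/Cuntz-Pimsner level.

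First I would verify that $\OO(\DD) = C^*(\tilde p, \tilde s, \tilde u)$. Recall that the universal covariant representation $\tilde r^\DD \colon \DD \to \OO(\DD)$ generates $\OO(\DD)$ by definition, and $(\tilde p, \tilde s, \tilde u)$ is the representation of $(E,G,\varphi)$ associated to $\tilde r^\DD$ via Theorem~\ref{bkqgenrel} and Corollary~\ref{bkqgenrelcp}. Applying the surjectivity criterion of Lemma~\ref{EP iv} to $\tilde r^\DD$ shows that the $C^*$-algebra generated by $\{\tilde p_v \tilde u_g\} \cup \{\tilde s_e \tilde u_g\}$ coincides with $\OO(\DD)$, as required.

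Next I would establish the stated universal property. Suppose $(P,S,U)$ is a covariant representation of $(E,G,\varphi)$ in $B$. By Corollary~\ref{bkqgenrelcp} there is an associated nondegenerate covariant representation $R \colon \DD \to B$ of $\DD$, and by the universal property of $(\OO(\DD), \tilde r^\DD)$ together with Lemma~\ref{nd phi_T} there is a unique nondegenerate homomorphism $\psi_R \colon \OO(\DD) \to B$ satisfying $\psi_R \circ \tilde r^\DD = R$. Setting $\psi_{P,S,U} := \psi_R$ and unpacking the bijection of Theorem~\ref{bkqgenrel} applied on both sides then immediately yields $\psi_{P,S,U} \circ \tilde p = P$, $\psi_{P,S,U} \circ \tilde s = S$, and $\psi_{P,S,U} \circ \tilde u = U$. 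Uniqueness follows because any homomorphism with these properties agrees with $\psi_{P,S,U}$ on all generators of the form $\tilde p_v \tilde u_g$ and $\tilde s_e \tilde u_g$, which span a dense subalgebra by the previous paragraph.

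Finally, to get the isomorphism with $\OO_{Y^\varphi}$, I would invoke \cite[Corollary~5.4]{bkqexelpardo}, which asserts that $\OO_{Y^\varphi}$ enjoys the same universal property for covariant representations of $(E,G,\varphi)$ (in the row-finite case). A standard uniqueness-of-universal-object argument then produces mutually inverse $*$-homomorphisms between $\OO(\DD)$ and $\OO_{Y^\varphi}$ intertwining the two universal representations. I do not anticipate any real obstacle here, since the preparatory work has already been done in Corollary~\ref{bkqgenrelcp} and in \cite{bkqexelpardo}; the only mild subtlety is to keep track of nondegeneracy throughout so that the comparison with \cite[Corollary~5.4]{bkqexelpardo} is legitimate (the difference in the formulations of what "representation" means is exactly what Lemma~\ref{EP iv} was designed to reconcile).
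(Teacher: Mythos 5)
Your proposal is correct and follows essentially the same route as the paper, which derives the corollary directly from Corollary~\ref{bkqgenrelcp} together with \cite[Corollary~5.4]{bkqexelpardo}; you have merely spelled out the bookkeeping (generation via Lemma~\ref{EP iv}, nondegeneracy via Lemma~\ref{nd phi_T}, and the uniqueness-of-universal-object argument) that the paper leaves implicit.
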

 
\begin{rem} \label{kats} As observed by Exel and Pardo in \cite[Example 3.4]{EP}, any Katsura algebra \cite{KatKirchberg} can be written as the Exel-Pardo algebra of an Exel-Pardo system of the form $(E, \mathbb{Z}, \varphi)$ with $E$ finite.  Since the class of Katsura algebras coincides with the class of Kirchberg algebras belonging to the UCT class,  it follows from Corollary \ref{ZZ-OO-EP} that any 
Kirchberg algebra in the UCT class is isomorphic to  some   $\mathcal{O}(E^* \rtimes^\varphi \mathbb{Z})$ with $E$ finite.
As $E^* \rtimes^\varphi \mathbb{Z}$ is singly aligned, we can conclude that the class of Cuntz-Krieger algebras associated to singly aligned left cancellative small categories is vast. 
\end{rem}

\begin{rem}
In Corollary \ref{ZZ-OO-EP},
if we try to remove the row-finiteness assumption then we run up against a special case of the Hao-Ng problem 
(see e.g.\ \cite{HaoNg, bkqexelpardo, Katsou}),
as we now explain. First, by the \emph{Hao-Ng problem}, we mean whether, given an action of a locally compact group $G$ on a 
nondegenerate
$C^*$-correspondence $X$ over a $C^*$-algebra $A$,
$\OO_{X\rtimes G}$ is naturally isomorphic to $\OO_X\rtimes G$. 
 (There is also a reduced version of this problem.)
Hao and Ng give a positive answer when $G$ is amenable \cite[Theorem~2.10]{HaoNg},
and in \cite[Theorem~5.5]{bkqexelpardo} the isomorphism is proved if $G$ is discrete and the action of $G$ on $A$ has Exel's Approximation Property.  
Now suppose that we are given an action of a discrete group $G$ on a directed graph $E$.
Then we have an Exel-Pardo system $(E,G,\varphi)$ with trivial cocycle $\varphi$, i.e., $\varphi(e,g)=g$ for all $(e,g)\in E^1\times G$.
Let $X$ be the graph correspondence over $c_0(E^0)$,
so that $C^*(E)\simeq \OO_X$.
Then the crossed product correspondence $X\rtimes G$ is the correspondence $Y^\varphi$ over $c_0(E^0)\rtimes G$, for the trivial cocycle $\varphi$ (see \cite{bkqexelpardo} for the notation),
and so
$\OO_{X\rtimes G}=\OO_{Y^\varphi}$.
On the other hand, the crossed product $\OO_X\rtimes G$ is universal for
covariant representations
$(P,S,U)$ of $(E,G,\varphi)$.
Thus, the conclusion of
Corollary \ref{ZZ-OO-EP}
holds in the case where $\varphi$ is trivial
if and only if the associated special case of the Hao-Ng problem
$\OO_{X\rtimes G}\simeq \OO_X\rtimes G$ has a positive answer.
\end{rem}

\section{Relationships with other constructions} \label{final}

Throughout this section $\CC$ denotes a left cancellative small category.

\subsection*{The regular Toeplitz algebra of $\CC$.}
We introduce in this subsection 
 the \emph{regular representation} $V$ of $\CC$ on $\ell^2(\CC)$ and define
 the \emph{regular Toeplitz algebra} $\TT_\ell(\CC)$ as the $C^*$-subalgebra of $\mathcal{B}(\ell^2(\CC))$ generated by $V$. 
 (See also \cite[Section 11]{jackpaths2}.)
Whenever $X$ is a subset of $\CC$, we identify $\ell^2(X)$ as a closed subspace of $\ell^2(\CC)$ in the canonical way and let $P_X$ denote the orthogonal projection from $\ell^2(\CC)$ onto $\ell^2(X)$. In particular,  we have $\ell^2(\varnothing) = \{0\}$ and $P_\varnothing = 0$. Moreover, we denote by ${\rm id}_X$ the identity map from $X$ into itself. In particular, ${\rm id}_{\varnothing}$ is the empty function having the empty set $\varnothing$ as its domain and its range. 
 
 For each $\alpha \in \CC $ we let $\tau_\alpha\: s(\alpha)\CC \to \alpha\CC$ denote  the (right) shift map given by 
\[\tau_\alpha(\beta) = \alpha\beta \righttext{for all } \beta \in s(\alpha)\CC.\] 
Since $\CC$ is left cancellative, each $\tau_\alpha$ is a bijection from $s(\alpha)\CC$  onto $\alpha\CC$, with inverse  $\sigma_\alpha\:\alpha\CC \to s(\alpha)\CC$ given by 
\[\sigma_\alpha(\alpha\beta)=\beta \righttext{ for all} \beta \in s(\alpha)\CC.\]

\begin{defn} For each $\alpha \in \CC$ let $V_\alpha\: \ell^2(\CC) \to \ell^2(\CC)$ be the linear contraction given by 
\[(V_\alpha\xi)(\gamma) = \
\begin{cases} 
\xi\big(\sigma_{\alpha}(\gamma)\big) & \righttext{if $\gamma \in \alpha\CC$,}\\ 
\hspace{4ex} 0 & \righttext{otherwise.} 
\end{cases}\]
\end{defn}

It is straightforward to check that each $V_\alpha^*$ is given by  
\[(V_\alpha^*\eta)(\gamma) = \
\begin{cases} 
\eta\big(\tau_{\alpha}(\gamma)\big) & \righttext{if $\gamma \in s(\alpha)\CC$,}\\ 
\hspace{4ex} 0 & \righttext{otherwise.} 
\end{cases}\]
It follows that each $V_\alpha$ is a partial isometry with initial space $\ell^2\big(s(\alpha)\CC\big)$ and final space $\ell^2\big(\alpha\CC\big)$.  In other words, we have $V_\alpha^*V_\alpha = P_{s(\alpha)\CC}$ and $V_\alpha V^*_\alpha = P_{\alpha\CC}$. Note also that if $v \in \CC^0$, then $V_v = P_{v\CC}$. Thus we get that $V_\alpha^*V_\alpha= V_{s(\alpha)}$.

Moreover, letting $\{\delta_\gamma\}_{\gamma \in \CC}$ denote the canonical basis of $\ell^2(\CC)$, we have
\[V_\alpha\delta_\gamma = \
\begin{cases} 
\delta_{\alpha\gamma}& \righttext{if $\gamma \in s(\alpha)\CC$,}\\ 
\hspace{1ex} 0 & \righttext{otherwise.} 
\end{cases}\]
So if $\beta \in \CC$ is such that $s(\alpha)=r(\beta)$, we get 
\[V_\alpha V_\beta\delta_\gamma = \
\begin{cases} 
V_\alpha\delta_{\beta\gamma}& \righttext{if $\gamma \in s(\beta)\CC$,}\\ 
\hspace{1ex} 0 & \righttext{otherwise} 
\end{cases}
= \begin{cases} 
\delta_{\alpha\beta\gamma}& \righttext{if $\gamma \in s(\beta)\CC$,}\\ 
\hspace{1ex} 0 & \righttext{otherwise} 
\end{cases}
= V_{\alpha\beta}\delta_\gamma\]
for all $\gamma \in \CC$, so we have  $V_\alpha V_\beta = V_{\alpha\beta}$.

We call $V$ the \emph{regular representation} of $\CC$ in $\ell^2(\CC)$ and define the \emph{regular Toeplitz algebra} $\TT_\ell(\CC)$ as the $C^*$-subalgebra of $\mathcal{B}(\ell^2(\CC))$ generated by the set $\{ V_\alpha : \alpha \in \CC\}$.
 
\begin{prop} \label{phiV} Assume that $\CC$ is finitely aligned. Then the map $V\:\CC \to \TT_\ell(\CC)$ sending each $\alpha$ to $V_\alpha$ is a representation of $\CC$ in $\TT_\ell(\CC)$ and the canonical homomorphism $\phi_V$ from the Toeplitz algebra $\TT(\CC)$ into $\TT_\ell(\CC)$ is onto.
\end{prop}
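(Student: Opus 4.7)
The plan is to verify that $V$ satisfies conditions (\ref{spi 1})--(\ref{spi 3}) of \defnref{spi rep}, after which the surjectivity of $\phi_V$ will be essentially automatic.

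Conditions (\ref{spi 1}) and (\ref{spi 2}) have already been established in the discussion preceding the proposition: the computations there show that $V_\alpha^* V_\alpha = P_{s(\alpha)\CC} = V_{s(\alpha)}$, and that $V_\alpha V_\beta = V_{\alpha\beta}$ whenever $s(\alpha) = r(\beta)$, so I would simply refer back to these. For condition (\ref{spi 3}), the key observation is that each $V_\gamma V_\gamma^* = P_{\gamma\CC}$ is the orthogonal projection onto $\ell^2(\gamma\CC)$, and that for arbitrary subsets $X, Y \subseteq \CC$ one has $P_X P_Y = P_{X \cap Y}$. In particular, the family $\{P_{\gamma\CC} : \gamma \in \CC\}$ consists of mutually commuting projections in $\mathcal{B}(\ell^2(\CC))$, so any finite join of them — computed a priori in $\mathcal{B}(\ell^2(\CC))^{**}$ as in \remref{remSpi} — actually lies in $\mathcal{B}(\ell^2(\CC))$ and equals the projection onto the union of the underlying subsets. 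Combined with finite alignment of $\CC$, this gives
\[
V_\alpha V_\alpha^* V_\beta V_\beta^* \;=\; P_{\alpha\CC}\,P_{\beta\CC} \;=\; P_{\alpha\CC \,\cap\, \beta\CC} \;=\; P_{\bigcup_{\gamma \in \alpha \vee \beta}\gamma\CC} \;=\; \bigvee_{\gamma \in \alpha \vee \beta} V_\gamma V_\gamma^*,
\]
as required; the case $\alpha\CC \cap \beta\CC = \varnothing$ is handled by the convention that the empty join equals $0 = P_\varnothing$.

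Having shown that $V$ is a representation of $\CC$ in $\TT_\ell(\CC)$, the universal property in \defnref{spi toep} produces a homomorphism $\phi_V\:\TT(\CC) \to \TT_\ell(\CC)$ with $\phi_V(t_\alpha) = V_\alpha$ for every $\alpha \in \CC$. Since $\TT_\ell(\CC)$ is \emph{by definition} the $C^*$-subalgebra of $\mathcal{B}(\ell^2(\CC))$ generated by $\{V_\alpha : \alpha \in \CC\}$, the image of $\phi_V$ contains this generating set and therefore equals $\TT_\ell(\CC)$. There is no real obstacle in this proof; the only point that requires a moment's thought is justifying why the join of the $P_{\gamma\CC}$ behaves as the naive supremum over the index set $\alpha \vee \beta$, which is immediate from the mutual commutativity of these projections.
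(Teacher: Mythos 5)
Your proposal is correct and follows essentially the same route as the paper's proof: conditions (1) and (2) are taken from the preceding discussion, condition (3) is verified via the chain $V_\alpha V_\alpha^* V_\beta V_\beta^* = P_{\alpha\CC}P_{\beta\CC} = P_{\alpha\CC\cap\beta\CC} = P_{\bigcup_{\gamma\in\alpha\vee\beta}\gamma\CC} = \bigvee_{\gamma\in\alpha\vee\beta}V_\gamma V_\gamma^*$, and surjectivity of $\phi_V$ follows because its image contains the generating set of $\TT_\ell(\CC)$. The extra remark justifying that the join of the commuting projections $P_{\gamma\CC}$ is the projection onto $\ell^2$ of the union is a harmless (and welcome) elaboration of a point the paper leaves implicit.
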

\begin{proof}
To show the first assertion, it only remains to check that $V$ satisfies  property (\ref{spi 3}). Let $\alpha, \beta \in \CC$. Then we get
 \[
 V_\alpha V_\alpha^* V_\beta V_\beta^*= P_{\alpha\CC} P_{\beta\CC} = P_{\alpha\CC \cap \beta\CC}
 = P_{\,\bigcup_{\gamma\in \alpha \vee \beta} \gamma\CC}= \bigvee_{\gamma\in \alpha \vee \beta}P_{\gamma\CC}
 = \bigvee_{\gamma\in \alpha \vee \beta} V_\gamma V^*_\gamma,
 \]
 as desired. Since the universal family $\{t_\alpha: \alpha\in \CC\}$ in $\TT(\CC)$ generates $\TT(\CC)$ as a $C^*$-algebra and we have $V_\alpha =\phi_V(t_\alpha)$ for each $\alpha \in \CC$, the second assertion is immediate from the definition of $\TT_\ell(\CC)$. 
\end{proof}

\subsection*{The inverse semigroup ${\rm ZM}(\CC)$} 

In this subsection we introduce the inverse semigroup ${\rm ZM}(\CC)$ consisting of zigzag maps on $\CC$, describe its semilattice of idempotents and show that ${\rm ZM}(\CC)$ is isomorphic to an inverse semigroup of partial isometries lying in $\TT_\ell(\CC)$. For an introduction to the theory of inverse semigroups, see for example \cite{lawsonbook}.

We first  recall that the symmetric inverse semigroup $I(\CC)$ consists of all partial bijections of $\CC$.  
The product in $I(\CC)$ is the composition of maps defined on the largest possible domain: if $A, B, C, D$ are subsets of $\CC$, $\varphi$ is bijection from $A$ onto $B$, and $\psi$ is bijection from $C$ onto $D$, then $\psi\varphi \in I(\CC)$ is the bijection from $\varphi^{-1}(B\cap C) $ onto $\psi(B\cap C) $ given by $(\psi\varphi)(\gamma) = \psi\big(\varphi(\gamma)\big)$ for each $\gamma \in \varphi^{-1}(B \cap C)$ when $B \cap C \neq \varnothing$,
 while $\psi\varphi = {\rm id}_{\varnothing}$ otherwise. We will often denote the domain of $\varphi\in I(\CC)$ by ${\rm dom}(\varphi)$ and its range by ${\rm ran}(\varphi)$.
The inverse of a partial bijection $\varphi\:A\to B$ in $I(\CC)$ is of course the inverse map $\varphi^{-1}\:B\to A$.  We then have $\varphi^{-1} \varphi = {\rm id}_A$ and $\varphi \varphi^{-1} = {\rm id}_B$. Thus the idempotent semilattice of $I(\CC)$ is $E(I(\CC))=\{{\rm id}_X \mid X \subseteq \CC\}$.

For each $\alpha \in \CC$  both  $\tau_\alpha$ and its inverse $\sigma_\alpha$ belong to $I(\CC)$. One readily sees that for $\alpha, \beta \in \CC $ we have $\tau_\alpha\tau_\beta = \tau_{\alpha\beta}$ and $\sigma_\beta\sigma_\alpha= \sigma_{\alpha\beta}$ if $s(\alpha) = r(\beta)$, while $\tau_\alpha\tau_\beta = {\rm id}_{\varnothing} = \sigma_\beta\sigma_\alpha$ otherwise.
Following \cite{DonMil} (where Donsig and Milan consider the case where $\CC$ is a category of paths in the sense of \cite{Spi11}), we define ${\rm ZM}(\CC)$ to be the inverse subsemigroup of $I(\CC)$ generated by $\{\tau_\alpha\}_{\alpha\in \CC} \cup \{{\rm id}_{\varnothing}\}$. In other words, ${\rm ZM}(\CC)$ is
the subsemigroup of $I(\CC)$ generated by $\{\tau_\alpha, \sigma_\alpha\}_{\alpha\in \CC} \cup \{{\rm id}_{\varnothing}\}$\footnote{Note that if $\CC^0$ consists of more than one element,  and we pick $v, w \in \CC^0$ with $v\neq w$, then we have $\sigma_v\sigma_w = {\rm id}_{ v\CC}\, {\rm id}_{ w\CC} = {\rm id}_{\varnothing}$, so we don't need to specify that ${\rm id}_{\varnothing}$ is included in ${\rm ZM}(\CC)$. 
}. 
 We may describe ${\rm ZM}(\CC)$ by introducing so-called {\it zigzag maps} on $\CC$ 
 (cf.~\cite{jackpaths2}).
By a \emph{zigzag} in $\CC$ we will mean an even tuple of the form \[\zeta = (\alpha_1, \beta_1, \ldots, \alpha_n,\beta_n),\] where $n\in \mathbb{N}$,  $\alpha_i, \beta_i \in \CC$ and $r(\alpha_i) = r(\beta_i)$ for $i =1, \ldots, n$, and $ s(\beta_i)=s(\alpha_{i+1}) $ for $i =1, \ldots, n-1$. Letting $\mathcal{Z}_\CC$ denote the set of all zigags in $\CC$  we define maps $s$ and $r$ from $\mathcal{Z}_\CC$ into $\CC^0$ by $s(\zeta) = s(\beta_n)$ and $r(\zeta) = s(\alpha_1)$ whenever $\zeta\in\mathcal{Z}_\CC$ is as above; we also define the \emph{reverse} of $\zeta$ as
\[\bar\zeta= (\beta_n,\alpha_n, \ldots, \beta_1, \alpha_1) \in \mathcal{Z}_\CC.\]  
To each $\zeta = (\alpha_1, \beta_1, \ldots, \alpha_n,\beta_n) \in \mathcal{Z}_\CC$, we associate the \emph{zigzag map} $\varphi_\zeta$ in ${\rm ZM}(\CC)$ defined by
\[ \varphi_\zeta= \sigma_{\alpha_1}\, \tau_{\beta_1}\, \cdots\, \sigma_{\alpha_n}\,\tau_{\beta_n}.\]
Clearly, we have $\varphi_{\bar\zeta}= \varphi_\zeta^{-1}$. Thus, ${\rm dom}(\varphi_{\bar\zeta})={\rm ran}(\varphi_\zeta)$ and ${\rm ran}(\varphi_{\bar\zeta})={\rm dom}(\varphi_\zeta)$. Note that if $\zeta, \zeta' \in \mathcal{Z}_\CC$ satisfy that $s(\zeta) = r(\zeta')$ and we let $\zeta  \zeta' \in \mathcal{Z}_\CC$ be defined by concatenation in the obvious way,  we have $\varphi_\zeta \varphi_{\zeta'} = \varphi_{\zeta \zeta'} $. On the other hand, if $s(\zeta) \neq r(\zeta')$, we get
$\varphi_\zeta \varphi_{\zeta'} = {\rm id}_\varnothing$. It follows that the set $\mathcal{S}=\{\varphi_\zeta : \zeta \in \mathcal{Z}_\CC\} \cup \{ {\rm id}_\varnothing\}$ is closed under the product and the inverse operation in $I(\CC)$, hence that it is an inverse subsemigroup of ${\rm ZM}(\CC)$.
Now, if  $\alpha, \beta \in \CC$, then $(\alpha, r(\alpha))$ and $(r(\beta), \beta)$ both belong to $\mathcal{Z}_\CC$ and we have
\[ \varphi_{(\alpha, r(\alpha))}= \sigma_\alpha \,, \, \, \varphi_{(r(\beta), \beta)}= \tau_\beta.\]
It follows that ${\rm ZM}(\CC)$ is contained in $\mathcal{S}$. Thus we can conclude that they are equal, that is, we have:
\begin{equation}\label{ZM}
{\rm ZM}(\CC) = \{\varphi_\zeta : \zeta \in \mathcal{Z}_\CC\} \cup \{ {\rm id}_\varnothing\}.
\end{equation} 

Since $\varphi_{\zeta}^{-1}\varphi_\zeta = {\rm id}_{{\rm dom}(\varphi_\zeta)}$ for every $\zeta \in \mathcal{Z}_\CC$, we get from (\ref{ZM}) that  the idempotent semilattice  of ${\rm ZM}(\CC)$ is given by 
\begin{equation} \label{E-ZM}
E(ZM(\CC))=  \{ {\rm id}_{{\rm dom}(\varphi_\zeta)} : \zeta \in \mathcal{Z}_\CC\big\}\cup \{{\rm id}_\varnothing\}.\
\end{equation}
To describe more precisely the domain and the range of a zigzag map, it will be helpful to introduce some notation. Let 
$\alpha \in \CC$ and $X\subseteq \CC$. Then we set
\begin{align*}\alpha X&:=
\tau_\alpha\big(s(\alpha)\CC \cap X\big),\\
\alpha^{-1} X&:= 
\sigma_\alpha\big(\alpha\CC \cap X\big).
\end{align*}
Note that $\alpha\CC$ has the same meaning as before, and that $\alpha^{-1}\CC = s(\alpha)\CC$.  
Moreover, we have $\alpha\alpha^{-1}X =  \alpha\CC \cap X$ and $\alpha^{-1} \alpha X= s(\alpha)\CC  \cap X$. Let $\alpha, \beta \in \CC$. One checks without difficulty that $\alpha(\beta X) = (\alpha\beta) X$ and $\beta^{-1}(\alpha^{-1}X)=(\alpha\beta)^{-1}X$ whenever $s(\alpha) = r(\beta)$, while we have $\alpha(\beta X) = \beta^{-1}(\alpha^{-1}X) = \varnothing $ otherwise. 
Moreover, we have \[
\alpha\beta^{-1}\CC= \tau_\alpha\big(s(\alpha)\CC \cap \beta^{-1}\CC\big) =  \tau_\alpha\big(s(\alpha)\CC \cap s(\beta)\CC\big).\]
Hence, $\alpha\beta^{-1}\CC = \alpha\CC$ if $s(\alpha)=s(\beta)$, while $\alpha\beta^{-1}\CC=\varnothing$ otherwise.  On the other hand, we have  
  \[\beta^{-1}\alpha\CC = \sigma_\beta (\beta\CC \cap \alpha \CC) ={\rm dom}(\sigma_\alpha\tau_\beta) \]
and
 \[\alpha^{-1}\beta\CC = \sigma_\alpha(\alpha\CC \cap \beta \CC) = {\rm ran}(\sigma_\alpha\tau_\beta)\,.\]
If $(\alpha, \beta) \not\in \mathcal{Z}_\CC$, i.e., $r(\alpha) \not= r(\beta)$, then $\alpha\CC \cap \beta \CC =\varnothing$, so $\sigma_\alpha\tau_\beta = {\rm id}_\varnothing$ and $\beta^{-1}\alpha\CC = \alpha^{-1}\beta\CC =\varnothing$. But if $(\alpha, \beta) \in \mathcal{Z}_\CC$, then $\varphi_{(\alpha, \beta)} = \sigma_\alpha\tau_\beta $, so we get
 \[{\rm dom}(\varphi_{(\alpha, \beta)})= \beta^{-1}\alpha\CC\,\text{,}\,\,{\rm ran}(\varphi_{(\alpha, \beta)})
 = \alpha^{-1}\beta\CC\,.\]

This may be generalized as follows:
\begin{lem} \label{dom-ran}
For $\zeta=(\alpha_1, \beta_1, \cdots, \alpha_n,\beta_n) \in \mathcal{Z}_\CC$ we have 
\begin{align*}
{\rm dom}(\varphi_\zeta) &= \beta_n^{-1}\alpha_n \cdots \beta_1^{-1} \alpha_1\CC
 \quad  \text{ and }\\
{\rm ran}(\varphi_\zeta) &= \alpha_1^{-1} \beta_1 \cdots \alpha_n^{-1}\beta_n\CC.
\end{align*}
\end{lem}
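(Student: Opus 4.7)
The plan is to prove the formula for ${\rm ran}(\varphi_\zeta)$ by induction on $n$, and then to deduce the formula for ${\rm dom}(\varphi_\zeta)$ by applying the range formula to the reverse zigzag $\bar\zeta$, using the identity $\varphi_\zeta^{-1} = \varphi_{\bar\zeta}$ already recorded in the text. The base case $n=1$ has been verified in the discussion immediately preceding the lemma.

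For the inductive step, I will set $\zeta_k = (\alpha_k,\beta_k,\ldots,\alpha_n,\beta_n)$, so that $\varphi_{\zeta_k} = \sigma_{\alpha_k}\tau_{\beta_k}\,\varphi_{\zeta_{k+1}}$, and proceed by descending induction on $k$ to establish
\[
{\rm ran}(\varphi_{\zeta_k}) = \alpha_k^{-1}\beta_k \cdots \alpha_n^{-1}\beta_n\CC.
\]
Given the inductive hypothesis at level $k+1$, put $Y = {\rm ran}(\varphi_{\zeta_{k+1}})$. Since $Y$ has the form $\alpha_{k+1}^{-1}(\,\cdots)$, it lies in $s(\alpha_{k+1})\CC$; the zigzag condition $s(\beta_k) = s(\alpha_{k+1})$ then upgrades this to $Y \subseteq s(\beta_k)\CC$. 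This containment is what allows the identity $\tau_{\beta_k}(Y) = \beta_k Y$ to hold without any loss from intersecting with the domain of $\tau_{\beta_k}$. Applying $\sigma_{\alpha_k}$ and unfolding the definition of $\alpha_k^{-1}(\cdot)$ then gives ${\rm ran}(\varphi_{\zeta_k}) = \alpha_k^{-1}\beta_k Y$, which closes the induction. Taking $k=1$ produces the range formula. The domain formula then follows from ${\rm dom}(\varphi_\zeta) = {\rm ran}(\varphi_{\bar\zeta})$ by applying the just-proved range formula to $\bar\zeta = (\beta_n,\alpha_n,\ldots,\beta_1,\alpha_1)$.

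The main technical point is the inclusion $Y \subseteq s(\beta_k)\CC$ at each stage; this is precisely what legitimizes the two rewritings $\tau_{\beta_k}(Y) = \beta_k Y$ and $\sigma_{\alpha_k}(\beta_k Y) = \alpha_k^{-1}\beta_k Y$ without any spurious intersection terms, and it is exactly here that the zigzag compatibility conditions $r(\alpha_i) = r(\beta_i)$ and $s(\beta_i) = s(\alpha_{i+1})$ are brought to bear. Once this inclusion is verified, the remainder of the argument is a direct iteration of the rewriting rules for $\alpha X$ and $\alpha^{-1} X$ derived just before the lemma.
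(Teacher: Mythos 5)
Your proof is correct and follows essentially the same route as the paper's: an induction on the length of the zigzag using the composition rule for partial bijections in $I(\CC)$, with the second formula obtained by applying the first to $\bar\zeta$. The only difference is cosmetic --- you peel pairs off the front and track ranges, whereas the paper appends a pair at the end and tracks domains (via the auxiliary identity $\beta^{-1}\alpha X = \sigma_\beta\tau_\alpha(\alpha^{-1}\beta\CC\cap X)$ rather than your containment $Y\subseteq s(\beta_k)\CC$); the two computations are exchanged by the very symmetry $\zeta\mapsto\bar\zeta$ that both proofs invoke.
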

\begin{proof} We have seen that the assertion holds when $n=1$. So assume it holds for some $n \geq1$. We first observe that if 
$\alpha, \beta \in \CC$ satisfy $r(\alpha) = r(\beta)$, so 
$(\alpha, \beta) \in \mathcal{Z}_\CC$, and $X \subseteq \CC$, then we have
\begin{equation}\label{b-1a}
\beta^{-1}\alpha X= \sigma_\beta (\beta\CC \cap \alpha X) = \sigma_\beta\tau_\alpha(\alpha^{-1}\beta\CC\, \cap X)
\end{equation}
Now, let $\zeta=(\alpha_1, \beta_1, \cdots, \alpha_{n+1},\beta_{n+1}) \in \mathcal{Z}_\CC$. Then $\varphi_{\zeta} = \varphi_{\zeta'}\,\varphi_{(\alpha_{n+1}, \beta_{n+1})},$ where $\zeta':=(\alpha_1, \beta_1, \cdots, \alpha_{n},\beta_{n})\in \mathcal{Z}_\CC$. 

Thus, using equation (\ref{b-1a}) with $\alpha = \alpha_{n+1},\, \beta= \beta_{n+1} $ and $X={\rm dom}(\varphi_{\zeta'})$ at the third step, and the induction hypothesis at the final step, we get 
\begin{align*}{\rm dom}(\varphi_\zeta) &= \varphi_{(\alpha_{n+1}, \beta_{n+1})}^{-1}\big( {\rm ran}(\varphi_{(\alpha_{n+1}, \beta_{n+1})}) \cap {\rm dom}(\varphi_{\zeta'})\big)\\
&=\sigma_{\beta_{n+1}}\tau_{\alpha_{n+1}}\big( \alpha^{-1}_{n+1} \beta_{n+1}\CC \,\cap\, {\rm dom}(\varphi_{\zeta'})\big)\\
&=\beta_{n+1}^{-1}\alpha_{n+1}\,{\rm dom}(\varphi_{\zeta'}) \\
&= \beta_{n+1}^{-1}\alpha_{n+1} \beta_n^{-1}\alpha_n \cdots \beta_1^{-1} \alpha_1\CC.
\end{align*}
This implies that ${\rm ran}(\varphi_\zeta) = {\rm dom}(\varphi_{\bar\zeta})= \alpha_1^{-1} \beta_1 \cdots \alpha_n^{-1}\beta_n\alpha_{n+1}^{-1}\beta_{n+1}\CC.$
\end{proof}
 For $\zeta =(\alpha_1, \beta_1, \cdots, \alpha_n,\beta_n) \in \mathcal{Z}_\CC$ we set 
\begin{equation}\label{Az}
A(\zeta) = \beta_n^{-1}\alpha_n \cdots \beta_1^{-1} \alpha_1\CC\,.
\end{equation} 
 As Lemma  \ref{dom-ran} says that $A(\zeta) = {\rm dom}(\varphi_\zeta)$,  we get from (\ref{E-ZM}) the following:
\begin{prop} \label{EZM}
\[E({\rm ZM}(\CC)) =  \{ {\rm id}_{A(\zeta)} : \zeta \in \mathcal{Z}_\CC\big\}\cup \{{\rm id}_\varnothing\}.\]
\end{prop}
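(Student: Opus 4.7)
The plan is to observe that this proposition is an immediate consequence of the two results established just above it, namely equation (\ref{E-ZM}) and Lemma \ref{dom-ran}. I would simply unwind the definitions: by (\ref{E-ZM}), every nonempty idempotent of ${\rm ZM}(\CC)$ has the form ${\rm id}_{{\rm dom}(\varphi_\zeta)}$ for some zigzag $\zeta \in \mathcal{Z}_\CC$, and conversely every such ${\rm id}_{{\rm dom}(\varphi_\zeta)}$ belongs to $E({\rm ZM}(\CC))$.

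Next I would invoke Lemma \ref{dom-ran}, which identifies ${\rm dom}(\varphi_\zeta) = \beta_n^{-1}\alpha_n \cdots \beta_1^{-1}\alpha_1 \CC$ for $\zeta = (\alpha_1,\beta_1,\ldots,\alpha_n,\beta_n)$. By the very definition (\ref{Az}), this set is exactly $A(\zeta)$, so ${\rm id}_{{\rm dom}(\varphi_\zeta)} = {\rm id}_{A(\zeta)}$. Substituting this into (\ref{E-ZM}) yields the desired description of $E({\rm ZM}(\CC))$.

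Since both directions of the set equality are just this substitution, there is essentially no obstacle: the real work has already been done in proving Lemma \ref{dom-ran} and in deriving (\ref{E-ZM}) from (\ref{ZM}). The proof is therefore a one-line remark, and indeed the text preceding the proposition already flags it as such. My written version would be a single sentence: ``This follows immediately by combining (\ref{E-ZM}) with Lemma \ref{dom-ran} and the definition (\ref{Az}) of $A(\zeta)$.''
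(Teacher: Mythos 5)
Your proposal is correct and follows exactly the paper's own route: the proposition is stated there as an immediate consequence of the description \eqref{E-ZM} of $E({\rm ZM}(\CC))$ together with Lemma~\ref{dom-ran}, which identifies ${\rm dom}(\varphi_\zeta)$ with $A(\zeta)$ as defined in \eqref{Az}. Nothing further is needed.
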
 
A useful consequence of this proposition is that the family of subsets of $\CC$ given by \[\mathcal{J}(\CC):=\big\{A(\zeta) : \zeta \in \mathcal{Z}_\CC\big\}\cup \{\varnothing\}\]  
is closed under finite intersections. 

Our next aim is to show that ${\rm ZM}(\CC)$ is isomorphic to a certain inverse semigroup of partial isometries in $\TT_\ell(\CC)$. We will use the notation introduced in the previous subsection.

For $\zeta=(\alpha_1, \beta_1,\,\cdots, \alpha_n,\beta_n) \in \mathcal{Z}_\CC$, we  define 
\[V_\zeta = V_{\alpha_1}^* V_{\beta_1} \cdots V_{\alpha_n}^*V_{\beta_n} \in 
\TT_\ell(\CC). \] 
It follows readily that for each $\eta \in \ell^2(\CC)$ we have
\begin{equation}\label{Vz}
V_\zeta\, \eta=\eta\circ\varphi_{\bar\zeta}\, \in \ell^2\big(A\big(\bar\zeta\big)\big) \subseteq \ell^2(\CC).
\end{equation} 
Hence $V_\zeta$ is a partial isometry with initial space $\ell^2\big(A(\zeta)\big)$ and final space $\ell^2\big(A(\bar\zeta)\big)$, i.e., 
$V_\zeta^*V_\zeta = P_{A(\zeta)}$ and $V_\zeta V^*_\zeta = P_{A(\bar\zeta)}$. Moreover, for $\zeta, \zeta' \in \mathcal{Z}_\CC$, we get
\[
V_\zeta V_{\zeta'}=
\begin{cases}
V_{\zeta  \zeta'}
\case s(\zeta)=r(\zeta'),
\\
\hspace{1ex} 0
& \righttext{otherwise.} 
\end{cases}
\]
Note that 
if $\zeta, \zeta' \in \mathcal{Z}_\CC$ satisfy that $\varphi_\zeta =\varphi_{\zeta'}$, then  (\ref{Vz}) implies that $V_\zeta = V_{\zeta'}$. It follows that the map $
\pi_\ell \: {\rm ZM}(\CC)\to \TT_\ell(\CC) $ 
defined  by $\pi_\ell ({\rm id}_\varnothing) = 0$ and
 \[
 \pi_\ell (\varphi_\zeta) = V_\zeta\]  for each $\zeta \in \mathcal{Z}_\CC$ is well-defined. Note also that for  $\zeta, \zeta' \in \mathcal{Z}_\CC$ we have
\begin{align*}
\pi_\ell
(\varphi_\zeta\varphi_{\zeta'}) &=
\begin{cases}
\pi_\ell
(\varphi_{\zeta  \zeta'})
\case s(\zeta)=r(\zeta'),
\\
\hspace{1ex} \pi_r({\rm id}_\varnothing) 
& \righttext{otherwise} 
\end{cases}
= 
\begin{cases}
V_{\zeta  \zeta'}
\case s(\zeta)=r(\zeta'),
\\ 
\hspace{1ex} 0
& \righttext{otherwise}
\end{cases}
\\
&= V_{\zeta}V_{\zeta'} = 
\pi_\ell
(\varphi_\zeta)
\pi_\ell
(\varphi_{\zeta'}),
\end{align*}
and 
\[
\pi_\ell
(\varphi_{\bar\zeta}) =V_{\bar\zeta} = V^*_\zeta = 
\pi_\ell
(\varphi_{\zeta})^*.
\]

Let us make a digression and consider an inverse semigroup $S$.
We recall that a map $\pi$ from $S$ into a $C^*$-algebra $B$ is called a {\it representation of $S$ in $B$} when 
 \ $\pi(st) = \pi(s)\pi(t)$ and $\pi(s^*)=\pi(s)^*$ for all $s, t\in S$; when $S$ has a zero element $0$, we also require that  $\pi(0) = 0$. It is well known that $\pi(S)$ is then an inverse semigroup with respect to the product and the adjoint operation in $B$, which consists of partial isometries in $B$.

Now, as we have checked above, $\pi_\ell$ is a representation of ${\rm ZM}(\CC)$ in $\TT_\ell(\CC)$. 
Moreover, $\pi_\ell$ is injective:

Indeed, assume  $V_\zeta= V_{\zeta'}$ for $\zeta, \zeta' \in \mathcal{Z}_\CC$. Then $P_{A(\zeta)}= P_{A(\zeta')}$, so $A(\zeta)= A(\zeta')$. 
Let $\gamma \in A(\zeta)= A(\zeta')$. From (\ref{Vz}) we get that \[V_\zeta \, \delta_\gamma = \delta_\gamma \circ \varphi_{\bar\zeta} = \delta_{\varphi_\zeta(\gamma)},\]
and, similarly,  $V_{\zeta'} \, \delta_\gamma = \delta_{\varphi_{\zeta'}(\gamma)}$.  It follows that $\delta_{\varphi_\zeta(\gamma)}= \delta_{\varphi_{\zeta'}(\gamma)}$, so $\varphi_\zeta(\gamma)=\varphi_{\zeta'}(\gamma)$. Thus $\varphi_\zeta = \varphi_{\zeta'}$, as desired.

As $\pi_\ell ({\rm ZM}(\CC)) = \{ V_\zeta : \zeta \in \mathcal{Z}_\CC\} \cup \{ 0\}$, we get:
\begin{prop} \label{prop ez in reg}  $\{ V_\zeta : \zeta \in \mathcal{Z}_\CC\} \cup \{ 0\}$ is an inverse semigroup of partial isometries in $\TT_\ell(\CC)$ which is isomorphic to ${\rm ZM}(\CC)$.
\end{prop}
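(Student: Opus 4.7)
The plan is that this proposition is essentially a synthesis of what has been established in the preceding discussion, so I would organize the argument around the map $\pi_\ell$ already constructed above. First I would recall that the displayed set $\{V_\zeta : \zeta \in \mathcal{Z}_\CC\} \cup \{0\}$ is by definition equal to $\pi_\ell({\rm ZM}(\CC))$, since $\pi_\ell({\rm id}_\varnothing)=0$ and $\pi_\ell(\varphi_\zeta) = V_\zeta$ for every zigzag $\zeta$.

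Second, I would invoke the two key properties of $\pi_\ell$ that have just been verified: (a) $\pi_\ell$ is a representation of the inverse semigroup ${\rm ZM}(\CC)$ in $\TT_\ell(\CC)$, i.e.\ it respects products and the involution $\zeta \mapsto \bar\zeta$, and (b) $\pi_\ell$ is injective, as shown via the action $V_\zeta\,\delta_\gamma = \delta_{\varphi_\zeta(\gamma)}$ for $\gamma \in A(\zeta)$. From (a) and the general fact that images of inverse semigroup representations in a $C^*$-algebra consist of partial isometries closed under products and adjoints, it follows that $\pi_\ell({\rm ZM}(\CC))$ is an inverse subsemigroup of $\TT_\ell(\CC)$ whose elements are partial isometries. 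Of course, in our concrete situation each $V_\zeta$ is already visibly a partial isometry with $V_\zeta^* V_\zeta = P_{A(\zeta)}$ and $V_\zeta V_\zeta^* = P_{A(\bar\zeta)}$ from the formula \eqref{Vz}, so this part is automatic.

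Finally, combining (a) and (b), the map $\pi_\ell$ becomes a bijective $*$-semigroup homomorphism from ${\rm ZM}(\CC)$ onto $\{V_\zeta : \zeta \in \mathcal{Z}_\CC\} \cup \{0\}$, i.e.\ an isomorphism of inverse semigroups. I do not foresee any real obstacle, since the only nontrivial point --- injectivity of $\pi_\ell$ --- has already been handled by evaluating $V_\zeta$ on basis vectors $\delta_\gamma$ to recover $\varphi_\zeta$ pointwise.
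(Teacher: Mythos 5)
Your proposal is correct and follows essentially the same route as the paper: the paper likewise treats the proposition as a summary of the preceding discussion, where $\pi_\ell$ has been shown to be an injective representation of ${\rm ZM}(\CC)$ whose image is exactly $\{V_\zeta : \zeta \in \mathcal{Z}_\CC\}\cup\{0\}$, and then appeals to the standard fact that the image of an inverse semigroup representation in a $C^*$-algebra is an inverse semigroup of partial isometries. No gaps.
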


\subsection*{The $C^*$-algebra $C^*({\rm ZM}(\CC))$} 
Let $S$ be an inverse semigroup having a zero element $0$.  
By the {\it full $C^*$-algebra $C^*(S)$} of $S$
we will mean the $C^*$-algebra which is universal for representations of $S$ in $C^*$-algebras. We stress that we only consider zero-preserving representations of $S$ and note that $C^*(S)$ is  often denoted by $C^*_0(S)$ in the literature. 
For completeness we recall how $C^*(S)$ is obtained from the Banach $*$-algebra $\ell^1(S)$ naturally associated to $S$ (see e.g.~\cite{pat:book}).
Letting $C^*\big(\ell^1(S)\big)$ denote the enveloping $C^*$-algebra of $\ell^1(S)$, we identify $\ell^1(S)$ with its canonical copy in $C^*\big(\ell^1(S)\big)$. If $\delta_s$ denote the usual delta function at $s$, we have $\delta_s\delta_0 = \delta_0\delta_s = \delta_0 $ for all $s\in S$, and it follows that $\C\delta_0$ is a closed ideal  of $C^*\big(\ell^1(S)\big)$. So we can set \[C^*(S) = C^*\big(\ell^1(S)\big)/\,\C\delta_0.\]
The map $\iota: S \to C^*(S)$ given by $s \mapsto \delta_s + \C\delta_0$ is then an injective representation of $S$ in $C^*(S)$ and we will often
consider $S$ as embedded in $C^*(S)$ via this map. It is now easy to verify that if $\pi$ is a representation of $S$ in a $C^*$-algebra $B$, then $\pi$ extends uniquely to a homomorphism  $\widetilde\pi$ from $C^*(S)$ into $B$, determined by \[\widetilde{\pi} (f + \C\delta_0) = \sum_{s\in S} f(s) \pi(s)\]
for all  $f\in \ell^1(S)$.

Next we recall that an inverse semigroup $S$ has a partial order (see e.g.~ \cite{lawsonbook} or \cite{pat:book}) given by 
$s\leq t$ if and only if $s=ss^*t$.
For $e,f \in E(S)$, we then have $e \leq f$ if and only if $e = ef$.  

Following  \cite{DonMil}, we will say that a homomorphism $\theta\:S\to S'$ from $S$ into an inverse semigroup $S'$ is \emph{finitely join-preserving} if for every finite subset $C$ of $S$ having a join $\vee C$ in $S$, the join  $\vee \theta(C)$  exists in $S'$ and is equal to $\theta(\vee C)$. As shown in \cite[Proposition 3.2]{DonMil}, this is equivalent to requiring that the restriction of $\theta$ to $E(S)$ is finitely join-preserving.

We will need another concept introduced by Donsig and Milan in \cite{DonMil}, closely related to Exel's notion of tightness  introduced \cite{exelcomb}.
Let $a \in S$. A finite subset $C$ of $S$ is said to be a {\it cover} of $a$ if $c\leq a$ for every $c\in C$ and if for every $s \in S$ with $s\leq a$ there exists some $c \in C$ and a nonzero element $b$ in $S$ such that  $b\leq s$ and $b\leq c$. 
A homomorphism $\theta$ from $S$ into another inverse semigroup $S'$ is then called {\it cover-to-join} if, whenever $C$ is a cover of some  $a\in S$, the join $\vee\theta(C)$ exists in $S'$ and is equal to $\theta(a)$. Note that this property is stronger than requiring that $\theta$ be
finitely join-preserving. If $\pi$ is  a representation of $S$ in a $C^*$-algebra $B$, and we consider it as a homomorphism from $S$ into 
the inverse semigroup $\pi(S)$, then it follows from \cite[Corollary 2.3]{DonMil} that $\pi$  is {\it tight} in Exel's sense if and only if it is cover-to-join.
 
We now consider a left cancellative small category $\CC$.  It will be useful to introduce the following conditions for a family $\{T_\zeta : \zeta \in \mathcal{Z}_\CC \}$ in a $C^*$-algebra $B$
 that were introduced in \cite[Definition 9.1 and Lemma 9.2]{jackpaths2}:
\begin{itemize}
\item[(S1)] $T_\zeta \,T_{\zeta'} = T_{\zeta \zeta'}$ if $s(\zeta)=r(\zeta')$, while $T_\zeta \,T_{\zeta'}= 0$ otherwise, 
\item[(S2)] $T_{\overline{\zeta}} = T_\zeta^*$,
\item[(S3)] $T^*_\zeta \,T_\zeta= \bigvee_{j=1}^n T^*_{\zeta_j}T_{\zeta_j}$ \, if $ A(\zeta)=\bigcup_{j=1}^n A(\zeta_j)$,
\item[(S4)]  $T_\zeta = T^*_\zeta\, T_\zeta$ \, if $\varphi_\zeta = {\rm id}_{A(\zeta)}$,
\item[(S5)] $T_\zeta = T_{\zeta'}$ if $\varphi_\zeta = \varphi_{\zeta'}$.
\end{itemize}
We first note that if (S1) -- (S3) are satisfied, then (S4) is equivalent to (S5) 
(cf.~\cite[Lemma 9.2(iv)]{jackpaths2}).
Next, we note that if (S1), (S2) and (S5) hold, then one readily checks that the map $\pi: {\rm ZM}(\CC)\to B$ defined by 
$\pi({\rm id}_\varnothing)= 0$ and  $\pi(\varphi_\zeta) = T_\zeta$ for each $\zeta \in \mathcal{Z}_\CC$ is a well-defined representation of ${\rm ZM}(\CC)$ in $B$. In particular, this implies that each $T_\zeta$ is a partial isometry in $B$.
Conversely, if $\pi$ is a representation of ${\rm ZM}(\CC)$ in a $C^*$-algebra $B$ and we set $T_\zeta = \pi(\varphi_\zeta)$ for each $\zeta \in \mathcal{Z}_\CC$, then $\{T_\zeta : \zeta \in \mathcal{Z}_\CC \}$ is easily seen to satisfy (S1), (S2) and (S5). 
Thus, setting $t'_\zeta := \iota(\varphi_\zeta)$ for each $\zeta \in \mathcal{Z}_\CC$, we get that $C^*({\rm ZM}(\CC))$ may be described as the universal $C^*$-algebra generated by a family $\{t'_\zeta : \zeta \in \mathcal{Z}_\CC \}$ satisfying the relations (S1), (S2) and (S5).

Next, we characterize when a representation $\pi$ of ${\rm ZM}(\CC)$ in a $C^*$-algebra $B$ is finitely join-preserving.
In view of Proposition \ref{EZM}, we may identify $E({\rm ZM}(\CC))$ as a semilattice with $\mathcal{J}(\CC)= \{A(\zeta) : \zeta \in \mathcal{Z}_\CC\} \cup \{\varnothing\}$, the product in being given by set-intersection and the partial order being given by set-inclusion. Using Donsig and Milan's result on finitely join-preserving homomorphisms, we get that
 $\pi$ is finitely join-preserving if and only if its restriction to $E({\rm ZM}(\CC))$ is finitely join-preserving,  if and only if the following condition holds:

For any $\zeta, \zeta_1, \ldots , \zeta_n \in \mathcal{Z}_\CC$ such that $ A(\zeta)=\bigcup_{j=1}^n A(\zeta_j)$, we have 
\begin{equation}\label{join-p}
\pi({\rm id}_{A(\zeta)}) =  \bigvee_{j=1}^n \pi({\rm id}_{A(\zeta_j)}).
\end{equation}
Since ${\rm id}_{A(\zeta)} = \varphi_{\overline{\zeta}\zeta} = \varphi_{\overline{\zeta}} \varphi_\zeta$, we have that $\pi({\rm id}_{A(\zeta)}) =  \pi(\varphi_\zeta)^* \pi(\varphi_\zeta)$, and similarly for the $\zeta_j$.  Thus Equation \eqref{join-p} is equivalent to (S3).
As an application of this characterization, let us show that  the representation $
\pi_\ell
\: {\rm ZM}(\CC)\to \TT_\ell(\CC)$ is finitely join-preserving: 

Let $\zeta, \zeta_1, \ldots , \zeta_n \in \mathcal{Z}_\CC$ be such that $ A(\zeta)=\bigcup_{j=1}^n A(\zeta_j)$. Then 
\begin{align*}
\pi_\ell ({\rm id}_{A(\zeta)}) &= 
\pi_\ell (\varphi_\zeta^*\varphi_\zeta) = V_\zeta^*V_\zeta = P_{A(\zeta)}\\
&= P_{\, \bigcup_{j=1}^n A(\zeta_j)} = \bigvee_{j=1}^n P_{A(\zeta_j)} 
=\bigvee_{j=1}^n 
\pi_\ell ({\rm id}_{A(\zeta_j)}).
\end{align*}
Our next result, which relies heavily on arguments from \cite{jackpaths2} and \cite{DonMil}, shows that in the finitely aligned case, one may switch from representations of $\CC$ to finitely join-preserving representations of ${\rm ZM}(\CC)$, or vice-versa, whenever convenient.

\begin{thm}\label{FJ}
Assume $\CC$ is finitely aligned. Then for every representation
$T$ of \,$\CC$ in a $C^*$-algebra $B$
there is a unique finitely join-preserving representation $\pi_T$ of ${\rm ZM}(\CC)$ making the following diagram commute:
\[
\xymatrix{
C^*({\rm ZM}(\CC))\, \ar@{-->}[r]^-{\widetilde{\pi_T}}
& B &\talg\CC \ar@{->}[l]_{\phi_T}
\\
{\rm ZM}(\CC)\ar@{^(->}[u]^{} \ar@{-->}[ur]^{\pi_T} 
&\CC \,\ar@{->}[u]^-T \ar[l]^-{\ \tau} \ar@{->}[ur]_t
}
\]

Moreover, the map $T \to \pi_T$ gives a bijection between representations of $\CC$ and finitely join-preserving representations of ${\rm ZM}(\CC)$.
\end{thm}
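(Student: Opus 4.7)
The plan is to work through the list of conditions (S1)--(S5) from just before the theorem statement, applied to the family $\{T_\zeta\}_{\zeta \in \mathcal{Z}_\CC}$ built from a representation $T\colon \CC \to B$. Given such $T$, for each zigzag $\zeta=(\alpha_1,\beta_1,\dots,\alpha_n,\beta_n)\in\mathcal{Z}_\CC$ I would define
\[
T_\zeta := T_{\alpha_1}^*T_{\beta_1}\cdots T_{\alpha_n}^*T_{\beta_n}.
\]
Once (S1), (S2) and (S5) are established for this family, setting $\pi_T(\varphi_\zeta):=T_\zeta$ and $\pi_T({\rm id}_\varnothing):=0$ yields a well-defined representation of ${\rm ZM}(\CC)$, and the equivalence spelled out just before the theorem (between finitely join-preserving representations and condition (S3)) will then give that $\pi_T$ is finitely join-preserving. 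Commutativity of the diagram is automatic because $\pi_T(\tau_\alpha)=\pi_T(\varphi_{(r(\alpha),\alpha)})=T_{r(\alpha)}^*T_\alpha=T_\alpha$, and $\widetilde{\pi_T}$ is then the induced map on $C^*({\rm ZM}(\CC))$.

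Properties (S1) and (S2) amount to a routine word manipulation using axioms (\ref{spi 1})--(\ref{spi 2}) of Definition~\ref{spi rep}; the vanishing case of (S1) when $s(\zeta)\neq r(\zeta')$ follows from $T_{s(\beta_n)}T_{s(\alpha_1')}=0$, i.e.\ orthogonality of distinct vertex projections. The main obstacle is (S3), which uses axiom (\ref{spi 3}) of Definition~\ref{spi rep} together with finite alignment of $\CC$. Following \cite[Section~9]{jackpaths2}, I would argue by induction on the length of $\zeta$: the inductive step rewrites an interior subword of the form $T_\alpha T_\alpha^*T_\beta T_\beta^*$ via axiom (\ref{spi 3}), where finite alignment ensures the join is over a finite set $\alpha\vee\beta$. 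Once (S3) is in hand, (S4) is a formal consequence (as in \cite[Lemma~9.2]{jackpaths2}), and (S4) together with (S1)--(S2) yields (S5), closing the well-definedness argument. Uniqueness of $\pi_T$ is then clear from (\ref{ZM}): every element of ${\rm ZM}(\CC)$ is a product of the $\tau_\alpha$'s and $\sigma_\alpha = \tau_\alpha^*$'s, so commutativity of the right-hand triangle of the diagram determines $\pi_T$ on generators and hence everywhere.

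For the inverse assignment, I would start with a finitely join-preserving representation $\pi$ of ${\rm ZM}(\CC)$ in $B$ and set $T_\alpha := \pi(\tau_\alpha)$. Axiom (\ref{spi 1}) follows from $\tau_\alpha^*\tau_\alpha = \sigma_\alpha\tau_\alpha = {\rm id}_{s(\alpha)\CC} = \tau_{s(\alpha)}$; axiom (\ref{spi 2}) from $\tau_\alpha\tau_\beta = \tau_{\alpha\beta}$ whenever $s(\alpha)=r(\beta)$; and axiom (\ref{spi 3}) from the finitely join-preserving hypothesis applied to the identity
\[
{\rm id}_{\alpha\CC\cap\beta\CC} \;=\; \bigvee_{\gamma\in\alpha\vee\beta}{\rm id}_{\gamma\CC}
\]
in $E({\rm ZM}(\CC))$, combined with $\tau_\gamma\tau_\gamma^* = {\rm id}_{\gamma\CC}$. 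Finally, the two assignments $T\mapsto\pi_T$ and $\pi\mapsto T$ are mutually inverse: in one direction $\pi_T(\tau_\alpha)=T_\alpha$ by construction, and in the other both $\pi_T$ and the starting $\pi$ are representations of ${\rm ZM}(\CC)$ agreeing on $\{\tau_\alpha:\alpha\in\CC\}$, so by the uniqueness step above they coincide. This yields the claimed bijection.
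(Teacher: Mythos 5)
Your proposal is correct and follows essentially the same route as the paper: both define $T_\zeta$ as the alternating word $T_{\alpha_1}^*T_{\beta_1}\cdots T_{\alpha_n}^*T_{\beta_n}$, establish (S1)--(S5) for this family (the paper simply cites \cite[Theorems 9.4 and 9.7]{jackpaths2} where you sketch the verification by hand --- your induction for (S3) is precisely the content of the cited Theorem 9.4), deduce the finitely join-preserving property from (S3) via the equivalence with \eqref{join-p}, prove uniqueness from $\pi_T\circ\tau=T$, and recover $T$ from a finitely join-preserving $\pi$ by setting $T_\alpha=\pi(\tau_\alpha)$. Your converse direction is marginally more direct --- you verify the three axioms of Definition~\ref{spi rep} for $\pi(\tau_\alpha)$ directly (using that $\mathrm{id}_{\alpha\CC\cap\beta\CC}$ is the join of the $\mathrm{id}_{\gamma\CC}$ in $E(\mathrm{ZM}(\CC))$) rather than checking (S1)--(S4) for the whole family $\{\pi(\varphi_\zeta)\}$ and invoking \cite[Theorem~9.7]{jackpaths2} --- but this is a cosmetic difference rather than a different method.
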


\begin{proof}  Let $T$ be a representation
of \,$\CC$ in a $C^*$-algebra $B$. By \cite[Theorem 9.7]{jackpaths2}, representations of $\CC$ correspond with representations of $\TT(\CC)$. For $\zeta = (\alpha_1, \beta_1, \cdots, \alpha_n,\beta_n) \in \mathcal{Z}_\CC$,  define $T_\zeta \in B$ by
\[  T_\zeta =T_{\alpha_1}^* T_{\beta_1} \cdots T_{\alpha_n}^*T_{\beta_n}.\]
 By \cite[Theorem 9.4]{jackpaths2},
we get that the family $\{ T_\zeta : \zeta \in \mathcal{Z}_\CC\}$ satisfies the conditions (S1) -- (S4), hence also (S5).
Thus the map $\pi_T\:{\rm ZM}(\CC) \to B$ given by $\pi_T({\rm id}_\varnothing) = 0$ and $\pi_T(\varphi_\zeta) = T_\zeta$ for $\zeta\in \mathcal{Z}_\CC$ 
is a representation of  ${\rm ZM}(\CC)$. 
It follows that
 \[\pi_T ({\rm id}_{A(\zeta)})= \pi_T (\varphi_{\bar\zeta\zeta})=T_{\bar\zeta\zeta}= T_\zeta^*T_\zeta.\]
 So if $ A(\zeta)=\bigcup_{j=1}^n A(\zeta_j)$, then by 
 (S3)
    \[\pi_T ({\rm id}_{A(\zeta)})=  T_\zeta^*\,T_\zeta =  \bigvee_{j=1}^n T^*_{\zeta_j}T_{\zeta_j} =  \bigvee_{j=1}^n \pi_T ({\rm id}_{A(\zeta_j)}).\]
Thus $\pi_T$ is finitely join-preserving (cf.~(\ref{join-p})). Moreover, for each $\alpha \in \CC$, we have \[\pi_T(\tau_\alpha) = \pi_T\big(\varphi_{(r(\alpha), \alpha)}\big)= T_{(r(\alpha), \alpha)} = T^*_{r(\alpha)} T_\alpha=
  T_{r(\alpha)} T_\alpha =   T_{r(\alpha)\alpha} = T_\alpha.\]
  Hence $\pi_T\circ \tau = T$, as desired to make the diagram commute.
  The uniqueness of $\pi_T$ is immediate from this identity.

To prove the last assertion of the theorem, let $\pi$ be a finitely join-preserving representation of ${\rm ZM}(\CC)$ in a $C^*$-algebra $B$. 
For each $\zeta \in \mathcal{Z}_\CC$ set $T_\zeta =\pi(\varphi_{\zeta}) \in B$. Then one readily checks 
the family $\{T_{\zeta} : \zeta\in\mathcal{Z}_\CC\}$ satisfies the relations 
(S1) -- (S4),
hence define a representation of $\TT(\CC)$.  By \cite[Theorem 9.7]{jackpaths2},
it follows that the map $T:\CC\to B$ defined for each $\alpha \in \CC$ by 
\[T_\alpha = \pi(\tau_\alpha) = \pi(\varphi_{(r(\alpha),\alpha)})\]  is a representation of $\CC$ in $B$.  Since $T = \pi \,\circ\, \tau$ we get that $\pi = \pi_{T}$, showing that the map $T\to \pi_T$ is surjective.
\end{proof}

Assume $\CC$ is finitely aligned.
Note that by considering the universal representation $t\:\CC \to \talg\CC$, we get from Theorem \ref{FJ} that there is 
a unique finitely join-preserving representation $\pi_t\:{\rm ZM}(\CC)\to \TT(\CC)$ making the following diagram commute:
 \[
\xymatrix{
C^*({\rm ZM}(\CC)) \,\ar@{->}[r]^-{\widetilde{\pi_t}}
&\TT(\CC) 
\\
{\rm ZM}(\CC)\ar@{->}[ur]_{\pi_t} \ar@{^(->}[u]^{}
& \CC \,\ar@{->}[u]^-t \ar[l]^-\tau
}
\]
We note that $\widetilde{\pi_t}$ is surjective (because  $\TT(\CC) = C^*(t))$.  Moreover, the universal property of $(\talg\CC, t)$ can be reformulated as follows:
\begin{cor}\label{FJ2}
Assume $\CC$ is finitely aligned and let $\pi$ be a finitely join-preserving representation of ${\rm ZM}(\CC)$ in a $C^*$-algebra $B$. Then there is a unique homomorphism $\phi^\pi\:\talg\CC\to B$ 
which makes the following diagram commute:
\[
\xymatrix{
C^*({\rm ZM}(\CC))\ar@{->}[r]^-{\widetilde{\pi_t}} & \talg\CC  \ar@{-->}[d]^{\phi^\pi}_{!}
\\
{\rm ZM}(\CC)  \ar@{^(->}[u]^{} \ar@{->}[ur]^{\pi_t}  \ar@{->}[r]^-{\pi} &B
}
\]
\end{cor}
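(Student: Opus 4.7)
The plan is to construct $\phi^\pi$ by using the bijection of \thmref{FJ} in reverse and then invoking the universal property of the Toeplitz algebra. Specifically, given the finitely join-preserving representation $\pi\:{\rm ZM}(\CC)\to B$, \thmref{FJ} provides a (unique) representation $T\:\CC\to B$ such that $\pi=\pi_T$; concretely, $T$ is defined by $T_\alpha=\pi(\tau_\alpha)$ for $\alpha\in\CC$. Applying the universal property of $(\talg\CC,t)$ from \defnref{spi toep} to this $T$ then yields a unique homomorphism $\phi_T\:\talg\CC\to B$ with $\phi_T\circ t=T$, and I would simply set $\phi^\pi:=\phi_T$.

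To verify that the triangle commutes, I would check that the two representations $\phi^\pi\circ\pi_t$ and $\pi$ of ${\rm ZM}(\CC)$ in $B$ agree on the generating set $\{\tau_\alpha:\alpha\in\CC\}\cup\{{\rm id}_\varnothing\}$. For each $\alpha\in\CC$, we have
\[
\phi^\pi(\pi_t(\tau_\alpha))=\phi_T(t_\alpha)=T_\alpha=\pi(\tau_\alpha),
\]
using the commutative diagram defining $\pi_t$ (so that $\pi_t\circ\tau=t$) and the universal property of $t$. Both maps send ${\rm id}_\varnothing$ to $0$. Since both $\phi^\pi\circ\pi_t$ and $\pi$ preserve products and adjoints, and since ${\rm ZM}(\CC)$ is generated (as an inverse semigroup) by $\{\tau_\alpha:\alpha\in\CC\}\cup\{{\rm id}_\varnothing\}$ (see the discussion preceding equation \eqref{ZM}), it follows that $\phi^\pi\circ\pi_t=\pi$.

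For uniqueness, suppose $\phi'\:\talg\CC\to B$ is any homomorphism with $\phi'\circ\pi_t=\pi$. Then for each $\alpha\in\CC$,
\[
\phi'(t_\alpha)=\phi'(\pi_t(\tau_\alpha))=\pi(\tau_\alpha)=T_\alpha=\phi^\pi(t_\alpha).
\]
Since $\talg\CC=C^*(t)$ is generated by $\{t_\alpha:\alpha\in\CC\}$, the homomorphisms $\phi'$ and $\phi^\pi$ must coincide. This gives the uniqueness asserted in the diagram.

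I do not anticipate a significant obstacle: the proof is essentially an assembly of \thmref{FJ} (to pass from $\pi$ to $T$) and the universal property of $\talg\CC$ (to pass from $T$ to $\phi^\pi$), with the commutativity check reduced to agreement on the generators $\tau_\alpha$ of ${\rm ZM}(\CC)$. The only point requiring mild care is to remember that ${\rm ZM}(\CC)$ is generated as an inverse semigroup (not just as a semigroup) by $\{\tau_\alpha\}$, so that the adjoints $\sigma_\alpha=\tau_\alpha^*$ are automatically handled because both $\phi^\pi\circ\pi_t$ and $\pi$ are $*$-preserving.
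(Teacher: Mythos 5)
Your proposal is correct and follows essentially the same route as the paper: both pass from $\pi$ to the unique representation $T$ of $\CC$ with $\pi=\pi_T$ via Theorem \ref{FJ}, set $\phi^\pi:=\phi_T$, and verify $\phi^\pi\circ\pi_t=\pi$ by checking agreement on the generators $\tau_\alpha$ of ${\rm ZM}(\CC)$. Your explicit uniqueness argument (agreement on the $t_\alpha$, which generate $\talg\CC$) is a welcome addition that the paper leaves implicit.
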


\begin{proof} By using Theorem \ref{FJ} we may write $\pi= \pi_T$ for a unique representation $T$ of $\CC$ in $B$. We may then set $\phi^\pi:= \phi_T\:\talg\CC\to B$. It is  straightforward to verify that $(\phi^\pi\, \circ \pi_t)\circ \tau = \pi\circ \tau$, and this implies that  $\phi^\pi\, \circ \pi_t = \pi$ since $\tau(\CC)$ generates ${\rm ZM}(\CC)$.
\end{proof}
 
 We also note that if  $\CC$ is finitely aligned, then the representation $V\:\CC\to \TT_\ell(\CC)$ satisfies that $\pi_V = 
 \pi_\ell$ (simply by comparing their definitions). Thus we recover from Theorem \ref{FJ} that the representation $ \pi_\ell
 :{\rm ZM}(\CC)\to \TT_\ell(\CC)$ is finitely join-preserving. 
 We also mention that $ \widetilde{\pi_\ell}$ factors  as follows:
 \[
\widetilde{\pi_\ell}:\xymatrix{
C^*({\rm ZM}(\CC)) \ar@{->}[r]^-{\widetilde{\pi_t}}
&\talg\CC \,\ar@{->}[r]^-{\phi_V}
& \TT_\ell(\CC).
}
\]
Indeed, we have \[(\phi_V\circ \widetilde{\pi_t})\circ \tau = \phi_V\circ t =V= \widetilde{\pi_V}\circ \tau = 
\widetilde{\pi_\ell}
\circ \tau ,\] which implies that 
$\widetilde{\pi_\ell}= \phi_V\circ \widetilde{\pi_t}.$ 

\begin{rem}\label{existenceTT}
Assume  $\CC$ is finitely aligned. 
As mentioned in Remark \ref{rem groupoid approach}
 the existence of $\talg\CC$ (as a certain nontrivial groupoid $C^*$-algebra) 
is established  in \cite{jackpaths2}.  An alternative way to proceed is to introduce 
the closed ideal of $C^*({\rm ZM} (\CC))$ given by
\[\mathcal{J}_{\rm FJ} := \bigcap \, \ker\, \widetilde\pi\,,\]
the intersection being taken over all finitely join-preserving representations $\pi$ of ${\rm ZM}(\CC)$ in $C^*$-algebras. 
It is then straightforward to see that the quotient $C^*$-algebra 
\[C_{\rm FJ}^*({\rm ZM}(\CC)) := C^*({\rm ZM} (\CC)) / \mathcal{J}_{\rm FJ}\] is universal for finitely join-preserving representations of ${\rm ZM}(\CC)$. More precisely, letting $\iota: {\rm ZM} (\CC) \to C^*({\rm ZM} (\CC))$ denote the canonical embedding and $q: C^*({\rm ZM} (\CC)) \to C_{\rm FJ}^*({\rm ZM}(\CC))$ denote the quotient map, then one sees that the pair $C_{\rm FJ}^*({\rm ZM}(\CC))$ and $q\circ \iota$ play the same role as $\talg\CC$ and $\pi_t$ in Corollary \ref{FJ2}. 
So we could have chosen to \emph{define} $\talg\CC$ as $C_{\rm FJ}^*({\rm ZM}(\CC))$, and shown, arguing as in the proof of Theorem \ref{FJ}, that this $C^*$-algebra is universal for representations of $\CC$.
\end{rem}
Recall from \cite[Definition 5.13]{jackpaths2} that the Toeplitz algebra $\TT(\CC)$ is defined for an arbitrary left cancellative small category as a groupoid $C^*$-algebra, and that by \cite[Theorem 9.4]{jackpaths2} it is characterized by (S1) -- (S4).
 Since the definition of $C_{\rm FJ}^*({\rm ZM}(\CC))$ in Remark \ref{existenceTT} also makes sense when $\CC$ is not finitely aligned, it 
 is equivalent to consider the Toeplitz algebra associated to a left cancellative small category $\CC$ as being defined by \[\talg\CC := C_{\rm FJ}^*({\rm ZM}(\CC)).\] 

The next result is essentially due to Donsig and Milan \cite{DonMil}:
\begin{thm}\label{CJ}
Assume that $\CC$ is finitely aligned and let $T$ be a representation of $\CC$ in a $C^*$-algebra $B$. Then 
 $T$  is covariant if and only if
the representation $\pi_T$ of ${\rm ZM}(\CC)$ in $B$ is cover-to-join.
\end{thm}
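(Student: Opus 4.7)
\medskip

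My plan is to adapt the arguments of \cite{DonMil} (which handle the category-of-paths case) to our setting. Since $\pi_T$ is finitely join-preserving by \thmref{FJ}, a standard inverse-semigroup fact (cf.\ \cite[Proposition~3.2]{DonMil} and its proof) shows that $\pi_T$ is cover-to-join if and only if its restriction to $E({\rm ZM}(\CC))$ is cover-to-join. Via \propref{EZM}, identify $E({\rm ZM}(\CC))\setminus\{{\rm id}_\varnothing\}$ with $\{A(\zeta):\zeta\in\mathcal{Z}_\CC,\, A(\zeta)\ne\varnothing\}$, ordered by inclusion; since $\mathcal{J}(\CC)$ is closed under finite intersections, the cover condition on $E({\rm ZM}(\CC))$ simplifies to: $\{A(\zeta_i)\}_{i=1}^n$ is a cover of $A(\zeta)$ iff each $A(\zeta_i)\subseteq A(\zeta)$ and every nonempty $A(\xi)\in\mathcal{J}(\CC)$ with $A(\xi)\subseteq A(\zeta)$ meets some $A(\zeta_i)$. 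I would also prove once and for all (by induction on zigzag length, using the rule $\gamma\in\alpha^{-1}\beta X\Rightarrow \alpha\gamma=\beta\delta$ for some $\delta\in X$) the crucial \emph{right-absorbing} property: if $\delta\in A(\zeta)$, then $\delta\CC\subseteq A(\zeta)$.

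For the ``if'' direction, I would take $v\in\CC^0$ and a finite exhaustive set $F\subseteq v\CC$, and verify that $\{{\rm id}_{\alpha\CC}:\alpha\in F\}$ is a cover of ${\rm id}_{v\CC}$ in ${\rm ZM}(\CC)$. Indeed, given any nonempty $A(\xi)\subseteq v\CC$, pick $\beta\in A(\xi)$; right-absorption yields $\beta\CC\subseteq A(\xi)$, and since $F$ is exhaustive at $v$, some $\alpha\in F$ gives $\alpha\CC\cap\beta\CC\ne\varnothing$, hence $\alpha\CC\cap A(\xi)\ne\varnothing$. Observing that $\pi_T({\rm id}_{v\CC})=T_v$ and $\pi_T({\rm id}_{\alpha\CC})=\pi_T(\tau_\alpha\sigma_\alpha)=T_\alpha T_\alpha^*$, the cover-to-join hypothesis gives $T_v=\bigvee_{\alpha\in F}T_\alpha T_\alpha^*$, which is exactly axiom \ref{spi 4} of \defnref{spi rep}.

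For the ``only if'' direction, suppose $T$ is covariant and $\{A(\zeta_i)\}_{i=1}^n$ covers $A(\zeta)$. The idea is to reduce to exhaustive sets. Using \lemref{dom-ran}, write $A(\zeta)=\beta^{-1}\alpha\, A(\zeta')$ for a shorter zigzag $\zeta'$, and conjugate: with $V:=T_\zeta$, one has $V V^* = \pi_T({\rm id}_{A(\bar\zeta)})$ and $V^*V=\pi_T({\rm id}_{A(\zeta)})$. A direct calculation shows $\pi_T({\rm id}_{A(\zeta)\cap Y})=V^*\pi_T({\rm id}_{A(\bar\zeta)\cdot Y})V$ for appropriate $Y$, so the join formula for a cover of ${\rm id}_{A(\zeta)}$ is equivalent by conjugation to one for a cover of ${\rm id}_{A(\bar\zeta)}$. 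Iterating, I reduce to the case $A(\zeta)=v\CC$ for some $v\in\CC^0$. Now, by right-absorption, for each $i$ we may choose a representative element and further decompose $A(\zeta_i)\cap v\CC$ using finite alignment as a finite union of sets $\gamma\CC$. Collecting all such $\gamma$ gives a finite subset $F\subseteq v\CC$ which, by the cover property (applied to $A(\xi)=\beta\CC$ for arbitrary $\beta\in v\CC$), is exhaustive at $v$. Covariance then yields $T_v=\bigvee_{\gamma\in F}T_\gamma T_\gamma^*$, which using condition \ref{spi 3} and finite alignment of $\CC$ gives $\pi_T({\rm id}_{v\CC})=\bigvee_i\pi_T({\rm id}_{A(\zeta_i)})$, as needed.

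The main obstacle will be the ``only if'' direction: translating an arbitrary cover of an arbitrary idempotent $A(\zeta)$ into a statement about a finite exhaustive set at a vertex. The conjugation step reducing to the principal case $A(\zeta)=v\CC$, and the subsequent refinement of a cover by general $A(\zeta_i)$'s to one by principal sets $\gamma\CC$ via finite alignment, will be the most delicate manipulations; the interplay between the relation $\sim$ on $\CC$ (trivial shifts due to invertibles) and independence of the refined sets must be tracked carefully, invoking \lemref{exhaust inv} and \lemref{cp preserved} to ensure that the joins of range projections are well-defined and unchanged under such choices.
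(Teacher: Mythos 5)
The paper's own proof of this theorem is a one-line deferral to \cite[Theorem~3.7]{DonMil}, so the real content lies in actually adapting that argument, which is what you attempt. Your ``if'' direction is correct: any nonzero element of ${\rm ZM}(\CC)$ below an idempotent is itself an idempotent ${\rm id}_{A(\xi)}$, your right-ideal argument shows that $\{{\rm id}_{\alpha\CC}:\alpha\in F\}$ is a cover of ${\rm id}_{v\CC}$ whenever $F$ is finite and exhaustive at $v$, and cover-to-join applied to this cover is literally axiom (\ref{spi 4}).

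The ``only if'' direction has a genuine gap: the conjugation--iteration that is supposed to reduce a cover of an arbitrary ${\rm id}_{A(\zeta)}$ to a cover of some ${\rm id}_{v\CC}$ does not work. Conjugating by $\varphi_\zeta$ carries ${\rm id}_{A(\zeta)}=\varphi_{\bar\zeta}\varphi_\zeta$ to ${\rm id}_{A(\bar\zeta)}$, which is again a general constructible ideal (conjugating back merely returns you to $A(\zeta)$); peeling off one letter instead, i.e.\ conjugating by $\varphi_{(\alpha_n,\beta_n)}=\sigma_{\alpha_n}\tau_{\beta_n}$, carries $A(\zeta)=\beta_n^{-1}\alpha_n A(\zeta'')$ to $\alpha_n^{-1}\beta_n\CC\cap A(\zeta'')$ by \eqref{b-1a}, whose zigzag representation is not shorter, so no induction closes. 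Indeed a general $A(\zeta)$ is simply not conjugate in ${\rm ZM}(\CC)$ to any ${\rm id}_{v\CC}$: already $\alpha\CC\cap\beta\CC$ can be an independent union of several principal ideals. The repair is to invoke finite alignment \emph{before} conjugating: an induction on the length of $\zeta$ using $\alpha\CC\cap\beta\CC=\bigcup_{\gamma\in\alpha\vee\beta}\gamma\CC$ shows that every constructible ideal is a finite union $A(\zeta)=\bigcup_j\delta_j\CC$ of principal right ideals. Each ${\rm id}_{\delta_j\CC}=\tau_{\delta_j}\sigma_{\delta_j}$ \emph{is} conjugate to ${\rm id}_{s(\delta_j)\CC}$; a cover $\{{\rm id}_{A(\zeta_i)}\}$ of ${\rm id}_{A(\zeta)}$ restricts, by intersecting with $\delta_j\CC$, to a cover of ${\rm id}_{\delta_j\CC}$, which transported by $\sigma_{\delta_j}$ and refined into principal pieces yields a finite exhaustive set at $s(\delta_j)$; covariance then gives $T_{\delta_j}T_{\delta_j}^*=T_{\delta_j}T_{s(\delta_j)}T_{\delta_j}^*\le\bigvee_i\pi_T({\rm id}_{A(\zeta_i)})$, and the finitely join-preserving property of $\pi_T$ from Theorem~\ref{FJ} (condition (S3)) recombines these over $j$ into $\pi_T({\rm id}_{A(\zeta)})\le\bigvee_i\pi_T({\rm id}_{A(\zeta_i)})$, the reverse inequality being clear. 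All of these ingredients appear in your sketch, but the decomposition into principal ideals must precede, not follow, the conjugation.
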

\begin{proof}
The proof of \cite[Theorem 3.7]{DonMil} adapts almost verbatim
to our more general situation, and we leave it to the reader to check this.
\end{proof}

When $\CC$ is finitely aligned, we may for example consider the universal covariant representation $\tilde t\:\CC \to \OO(\CC)$ and obtain that  
the associated representation $\pi_{\tilde t}\:{\rm ZM}(\CC)\to \OO(\CC)$ is cover-to-join. Moreover, we have:

\begin{cor}\label{C-J} Assume that $\CC$ is finitely aligned  
and let $\pi$ be a cover-to-join $(=$ tight$)$  representation of ${\rm ZM}(\CC)$ in a $C^*$-algebra $B$. Then there is a unique homomorphism $\psi^\pi\:\OO(\CC)\to B$ which makes the following diagram commute:
\[
\xymatrix{
C^*({\rm ZM}(\CC))\ar@{->}[r]^-{\widetilde{\pi_{\tilde t}}} & \OO(\CC)  \ar@{-->}[d]^{\psi^\pi}_{!}
\\
{\rm ZM}(\CC)  \ar@{^(->}[u]^{} \ar@{->}[ur]^{\pi_{\tilde t}}  \ar@{->}[r]^-{\pi} &B
}
\]
\end{cor}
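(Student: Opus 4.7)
The plan is to reduce Corollary \ref{C-J} to the universal property of $\OO(\CC)$ by going through the bijection of Theorem \ref{FJ} and then applying Theorem \ref{CJ}.

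First, since every cover-to-join representation of an inverse semigroup is automatically finitely join-preserving, Theorem \ref{FJ} produces a unique representation $T\:\CC\to B$ such that $\pi=\pi_T$. By Theorem \ref{CJ}, the fact that $\pi=\pi_T$ is cover-to-join is exactly equivalent to $T$ being covariant. So the universal property of the Cuntz--Krieger algebra (Definition \ref{spi cp}) yields a unique homomorphism $\psi_T\:\OO(\CC)\to B$ with $\psi_T\circ\tilde t = T$. I set $\psi^\pi := \psi_T$.

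Next I would verify the diagram commutes, i.e., $\psi^\pi\circ\pi_{\tilde t}=\pi$. Precomposing with $\tau\:\CC\to{\rm ZM}(\CC)$ and using the identity $\pi_{\tilde t}\circ\tau=\tilde t$ (which is the content of the commutative triangle in Theorem \ref{FJ} applied to $\tilde t$), one gets
\[
(\psi^\pi\circ\pi_{\tilde t})\circ\tau \;=\; \psi^\pi\circ\tilde t \;=\; T \;=\; \pi_T\circ\tau \;=\; \pi\circ\tau.
\]
Since $\tau(\CC)$ generates ${\rm ZM}(\CC)$ as an inverse semigroup (by the discussion preceding \eqref{ZM}, every zigzag map is a product of the $\tau_\alpha$'s and their inverses), any two representations of ${\rm ZM}(\CC)$ that agree on $\tau(\CC)$ must agree everywhere; hence $\psi^\pi\circ\pi_{\tilde t}=\pi$ as required.

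Finally, for uniqueness, suppose $\psi\:\OO(\CC)\to B$ is any homomorphism with $\psi\circ\pi_{\tilde t}=\pi$. Then the same computation gives $\psi\circ\tilde t=\psi\circ\pi_{\tilde t}\circ\tau=\pi\circ\tau=T$, so $\psi$ and $\psi^\pi$ agree on the range of $\tilde t$; since this range generates $\OO(\CC)$ as a $C^*$-algebra, $\psi=\psi^\pi$. There is no real obstacle here: all the work was already done in Theorems \ref{FJ} and \ref{CJ}, and this corollary is essentially a packaging statement that translates ``tight = cover-to-join'' representations of ${\rm ZM}(\CC)$ into covariant representations of $\CC$ via the universal bijection.
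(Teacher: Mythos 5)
Your proposal is correct and follows exactly the paper's route: apply Theorem \ref{FJ} (using that cover-to-join implies finitely join-preserving) to write $\pi=\pi_T$, invoke Theorem \ref{CJ} to see that $T$ is covariant, and set $\psi^\pi:=\psi_T$. The paper's proof leaves the commutativity check as ``straightforward''; you have simply written out that check (precomposing with $\tau$ and using that $\tau(\CC)$ generates ${\rm ZM}(\CC)$), which matches the argument the paper itself spells out for the analogous Corollary \ref{FJ2}.
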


\begin{proof}
By using Theorem \ref{FJ} and Theorem \ref{CJ} we may write $\pi= \pi_T$ for a unique covariant representation $T$ of $\CC$ in $B$. We may then set $\psi^\pi:= \psi_T\:\OO(\CC)\to B$. It is  then straightforward to verify that 
$\psi^\pi\, \circ \pi_{\tilde t} = \pi$.
\end{proof}
The essence of Corollary \ref{C-J} is that  if $\CC$ is finitely aligned, then $\OO(\CC)$ is isomorphic to the {\it tight $C^*$-algebra} \cite{exelcomb, DonMil} associated with the inverse semigroup ${\rm ZM}(\CC)$. 
A similar result holds in the general case - see \cite[Definition 10.8 and Theorem 10.10]{jackpaths2}.

\subsection*{The $C^*$-algebra $C^*_{\rm Li}(\CC)$} \label{subsec li algebras} 
Let $\CC$ be a left cancellative small category.
 We will say that $X\subseteq \CC$ is a \emph{right ideal} of $\CC$ if  $\alpha \beta \in X$ whenever $\alpha \in X$ and $\beta \in s(\alpha)\CC$. Note that we consider $\varnothing$ as a right ideal.
Note also that if $X$ is a right ideal of $\CC$, then it is straightforward to check that $\alpha X$ and $\alpha^{-1} X$ are also right ideals of $\CC$. Thus it follows from (\ref{Az}) that $A(\zeta)$ is a right ideal of $\CC$ for every $\zeta\in \mathcal{Z}_\CC$. In analogy with the terminology used by Li in \cite{LiSemigroup}
for left cancellative monoids, we will call \[\mathcal{J}(\CC) =\{A(\zeta) : \zeta \in \mathcal{Z}_\CC\}\cup \{\varnothing\}\] the family of {\it constructible right ideals} in $\CC$ (and we note that this coincides with Li's definition when $\CC$ is a monoid). As mentioned after Proposition \ref{EZM}, $\mathcal{J}(\CC)$ is closed under finite intersections. Note that for $E= A(\zeta)$ with $\zeta \in \mathcal{Z}_\CC$ all elements of $E$ have a common range, namely 
 $s(\zeta)$. We denote this by $r(E)$.  Li also uses a larger collection of right ideals, which we adapt as $\mathcal{J}^{(\cup)}(\CC)$, the set of all finite unions of elements of $\mathcal{J}(\CC)$.

In the paper quoted above, Li defines what he calls the full $C^*$-algebra associated with a left cancellative monoid, as well as several variations on this definition (see Definitions 2.2, 2.4, 3.2, and the remarks before section 3.1 in {\it op.~cit.}). Since we are dealing with a small category instead of a monoid, we must adapt the relations given in \cite{LiSemigroup} to account for the presence of multiple units.
 Thus we may define the $C^*$-algebra $C^*_{\rm Li}(\CC)$ associated with $\CC$ 
as the universal $C^*$-algebra generated by a family $\{v_\alpha\}_{\alpha \in \CC}$ of partial isometries and a family $\{ p_X\}_{X\in \mathcal{J}(\CC)}$ of projections satisfying the relations
\[  \text{(L1)} \quad v_\alpha^* v_\alpha = p_{s(\alpha)\CC}, \quad \text{(L2)} \quad v_\alpha v_\beta = \begin{cases} v_{\alpha\beta} &\righttext{if }s(\alpha) = r(\beta), \\ \ 0 &\righttext{otherwise,}\end{cases} \]
\[ \text{(L3)} \quad p_\varnothing = 0, \quad \text{(L4)} \quad p_X \, p_Y = p_{X\cap Y}, \quad \text{(L5)} \quad v_\alpha\, p_X v^*_\alpha = p_{\alpha X} \]
for $\alpha, \beta \in \CC$ and $X, Y \in \mathcal{J}(\CC)$.
\begin{lem}\label{ortho proj}
Assume $\{v_\alpha\}_{\alpha \in \CC}$ is a family of partial isometries in a $C^*$-algebra $B$ which satisfies (L2). Then $v_u$ is a projection for every $u \in \CC^0$.  Moreover, $v_\alpha v_\beta^* = 0$ whenever $\alpha, \beta \in \CC$ and $s(\alpha) \neq s(\beta)$.
\end{lem}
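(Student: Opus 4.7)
The plan is to prove the two assertions in sequence, using the first to establish the second.

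For the first assertion, I observe that for $u \in \CC^0$ the composition $u \cdot u$ is defined (since $s(u) = r(u) = u$) and equals $u$ by axiom (iv) of a small category, so (L2) yields $v_u \cdot v_u = v_{uu} = v_u$. Thus $v_u$ is an idempotent partial isometry, and it is a standard $C^*$-algebraic fact that any such element is a projection. I would justify this last step briefly, for instance by invoking the ``norm-one idempotent'' lemma (a contractive idempotent in a $C^*$-algebra is self-adjoint) applied to $v_u$, whose norm is at most $1$. Alternatively, a direct computation works: setting $p = v_u^* v_u$ and $q = v_u v_u^*$, the identities $v_u v_u^* v_u = v_u$ and $v_u^2 = v_u$ combine to give $v_u = qp$ and $v_u^* = pq$, from which $p = pqp$ and $q = qpq$; this forces $p = q$, so $v_u$ is a normal idempotent and hence self-adjoint by commutative functional calculus.

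For the second assertion, given $\alpha, \beta \in \CC$ with $s(\alpha) \neq s(\beta)$, the strategy is to absorb a ``source factor'' into $v_\beta^*$ and then apply (L2) to force a zero. Since $\alpha \cdot s(\alpha) = \alpha$ and $\beta \cdot s(\beta) = \beta$ in $\CC$, relation (L2) gives
\[
v_\alpha = v_\alpha\, v_{s(\alpha)} \quad \text{and} \quad v_\beta = v_\beta\, v_{s(\beta)}.
\]
Taking the adjoint of the second equality and using the first assertion to replace $v_{s(\beta)}^*$ by $v_{s(\beta)}$, we obtain $v_\beta^* = v_{s(\beta)}\, v_\beta^*$. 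Substituting,
\[
v_\alpha\, v_\beta^* \;=\; v_\alpha\, v_{s(\beta)}\, v_\beta^*,
\]
and a final application of (L2) to the factor $v_\alpha\, v_{s(\beta)}$ gives zero, because $r(s(\beta)) = s(\beta) \neq s(\alpha)$.

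The only genuinely nontrivial point is the deduction that an idempotent partial isometry is a projection; everything else is mechanical manipulation of (L2) together with the composition identities $\alpha \cdot s(\alpha) = \alpha$ in $\CC$.
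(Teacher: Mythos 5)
Your proof is correct and follows essentially the same route as the paper: deduce $v_u^2=v_u$ from (L2), conclude that an idempotent partial isometry is a projection, and then kill $v_\alpha v_\beta^*$ by inserting the source projection $v_{s(\beta)}$ and applying (L2). The only (immaterial) difference is in the middle step: the paper justifies ``idempotent partial isometry $\Rightarrow$ projection'' spatially, via the initial and final spaces in a faithful Hilbert space representation, whereas you give the norm-one-idempotent lemma or the purely algebraic computation with $p=v_u^*v_u$ and $q=v_uv_u^*$; both of your justifications check out.
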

\begin{proof} Let  $u \in \CC^0$. Then (L2) gives that $v_u^2 = v_u$. Since $v_u$ is a partial isometry, this implies that $v_u$ is a projection. (This is surely well known, and may be proven as follows. We can assume that $B$ is faithfully represented on a Hilbert space $H$. Since $v_u$ acts as the identity on its final space $N$, we get that $N \subseteq M$, where $M$ denotes the initial space of $v_u$. Now the same argument applies to the partial isometry $v_u^*$. So we also get that $M\subseteq N$, hence that $M=N$. It follows that $v_u$ is the orthogonal projection of $H$ onto $M=N$.) 
Next,  consider $\alpha, \beta \in \CC$. Using (L2) we get 
$v_\alpha = v_{\alpha  s(\alpha)} = v_{\alpha} v_{s(\alpha)}$; using also that $v_{s(\beta)}$ is self-adjoint, we get $v_\beta^* =  (v_{\beta} v_{s(\beta)})^* =  v_{s(\beta)} v_{\beta}^*$. Now, if $s(\alpha) \neq s(\beta)$, then (L2) gives $v_{s(\alpha)}v_{s(\beta)} =0$, and we therefore get
\[v_\alpha v_\beta^* = v_{\alpha} v_{s(\alpha)}v_{s(\beta)} v_{\beta}^* = 0. \qedhere \] 
\end{proof}
Li also defines a variation compatible with unions. Thus we define $C^{*\,(\cup)}_{\rm Li}(\CC)$ as the universal $C^*$-algebra generated by a family $\{v_\alpha\}_{\alpha \in \CC}$ of partial isometries and a family $\{ p_X\}_{X\in \mathcal{J}^{(\cup)}(\CC)}$ of projections satisfying the relations (L1) -- (L3), (L4)$^{(\cup)}$, (L5)$^{(\cup)}$, and (L6), where (L4)$^{(\cup)}$ and (L5)$^{(\cup)}$ are the same as (L4) and (L5) but using ideals from $\mathcal{J}^{(\cup)}(\CC)$, and (L6) is the relation
\[
\text{(L6)} \quad p_{X \cup Y} = p_X \vee p_Y, \text{ for } X,\; Y \in \mathcal{J}^{(\cup)}(\CC).
\]
Since the projections $\{ p_X \}_{X \in \mathcal{J}^{(\cup)}(\CC)}$ commute, by (L4)$^{(\cup)}$, the join in (L6) is given by $p_X \vee p_Y = p_X + p_Y - p_X p_Y$ (as in \cite[Definition 2.4]{LiSemigroup}).

In the case where $\CC$ is a submonoid of a group, Li also defines an algebra more directly related to the definition of constructible ideals.  In fact, the definition does not require in an essential way the ambient group, and we adapt it to general left cancellative small categories as follows.  We define $C^*_{{\rm Li},\,s}(\CC)$ as the universal $C^*$-algebra generated by a family $\{v_\alpha\}_{\alpha \in \CC}$ of partial isometries and a family $\{ p_X\}_{X\in \mathcal{J}(\CC)}$ of projections satisfying the relations (L1) -- (L3) and a new relation: \\

(L7) for $\zeta = (\alpha_1,\beta_1, \ldots, \alpha_n,\beta_n) \in \mathcal{Z}_\CC$, if $\varphi_\zeta = \text{id}_{A(\zeta)}$ then \linebreak $v_{\alpha_1}^* v_{\beta_1} \cdots v_{\alpha_n}^* v_{\beta_n} = p_{A(\zeta)}$. \\

It is interesting to observe that (L7) implies (L1):  for $\alpha \in \CC$, 
$\varphi_{(\alpha,\alpha)} = \text{id}_{s(\alpha) \CC}$,
 hence by (L7) we have $v_\alpha^* v_\alpha = p_{s(\alpha)\CC}$.  (However, (L1) does not appear to be a consequence of (L2) -- (L5).)  We also note that if $\CC$ is a submonoid of a group, then $C^*_{{\rm Li},s}(\CC)$ reverts to the analogous algebra in \cite{LiSemigroup}.

Further, Li mentions, but does not explicitly define, a fourth algebra, which we will find it convenient to have: $C^{* \, (\cup)}_{{\rm Li},s}(\CC)$ is the universal $C^*$-algebra generated by a family $\{v_\alpha\}_{\alpha \in \CC}$ of partial isometries and a family $\{ p_X\}_{X\in \mathcal{J}^{(\cup)}(\CC)}$ of projections satisfying the relations (L1) -- (L3), (L6), and (L7).

Since the $^{(\cup)}$-versions are obtained from the other two by adding relation (L6), it is clear that there are surjections $\pi^{(\cup)}\: C^*_{\rm Li}(\CC) \to C^{*\, (\cup)}_{\rm Li}(\CC)$ and $\rho^{(\cup)}\: C^*_{{\rm Li},s}(\CC) \to C^{*\, (\cup)}_{{\rm Li},s}(\CC)$ taking generators to generators.  The following is an analog of \cite[Lemma 3.3]{LiSemigroup}.

\begin{lem} \label{lem pi s surjective}

There is a surjective homomorphism $\pi_s\: C^*_{\rm Li}(\CC) \to C^*_{{\rm Li},s}(\CC)$ carrying generators to generators. 

\end{lem}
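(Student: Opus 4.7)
The plan is to invoke the universal property of $C^*_{\rm Li}(\CC)$: it suffices to verify that the canonical generators $\{v_\alpha\}_{\alpha \in \CC}$ and $\{p_X\}_{X \in \mathcal{J}(\CC)}$ of $C^*_{{\rm Li},s}(\CC)$ satisfy all of (L1)--(L5). Since (L1), (L2), and (L3) are built into the definition of $C^*_{{\rm Li},s}(\CC)$, the task reduces to deriving (L4) and (L5) from those three relations together with (L7).

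The linchpin of the argument is the identity
\[ p_{A(\zeta)} = v_\zeta^* v_\zeta, \qquad v_\zeta := v_{\alpha_1}^* v_{\beta_1} \cdots v_{\alpha_n}^* v_{\beta_n}, \]
obtained by applying (L7) to the idempotent zigzag $\bar\zeta\zeta$ (for which $\varphi_{\bar\zeta\zeta} = \id_{A(\zeta)}$), combined with the concatenation identity $v_{\zeta\rho} = v_\zeta v_\rho$ valid whenever $s(\zeta) = r(\rho)$. For (L5) with $s(\alpha) = s(\zeta)$, I would augment $\zeta$ to the zigzag $\zeta' := (\alpha_1, \beta_1, \ldots, \alpha_n, \beta_n, \alpha, r(\alpha))$, which satisfies $A(\zeta') = \alpha A(\zeta)$; using (L2) to reduce $v_\alpha^* v_{r(\alpha)} = v_\alpha^*$ gives $v_{\zeta'} = v_\zeta v_\alpha^*$, so (L7) yields $p_{\alpha A(\zeta)} = v_\alpha v_\zeta^* v_\zeta v_\alpha^* = v_\alpha\, p_{A(\zeta)}\, v_\alpha^*$. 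When $s(\alpha) \ne s(\zeta)$ one has $\alpha A(\zeta) = \varnothing$, and \lemref{ortho proj} applied to the junction $v_\alpha v_{\beta_n}^*$ forces $v_\alpha v_\zeta^* = 0$, recovering (L5) in this case.

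For (L4), when $s(\zeta) = s(\eta)$ the concatenation $\bar\zeta\zeta\bar\eta\eta$ is a valid zigzag whose associated map is $\id_{A(\zeta) \cap A(\eta)}$; (L7) and the concatenation identity then give $p_{A(\zeta) \cap A(\eta)} = v_{\bar\zeta\zeta}\, v_{\bar\eta\eta} = p_{A(\zeta)}\, p_{A(\eta)}$. The main obstacle is the complementary case $s(\zeta) \ne s(\eta)$, where $A(\zeta) \cap A(\eta) = \varnothing$ and no single zigzag represents the intersection, so the concatenation shortcut is unavailable. I would resolve this by applying \lemref{ortho proj} to the innermost adjacent factors $v_{\beta_n} v_{\beta'_m}^{*}$ (where $\beta'_m$ is the last entry of $\eta$) to deduce $v_\zeta v_\eta^* = 0$; the identity
\[ (p_{A(\zeta)} p_{A(\eta)})^* (p_{A(\zeta)} p_{A(\eta)}) = p_{A(\eta)} p_{A(\zeta)} p_{A(\eta)} = v_\eta^* (v_\eta v_\zeta^*)(v_\zeta v_\eta^*) v_\eta = 0 \]
then forces $p_{A(\zeta)} p_{A(\eta)} = 0 = p_{A(\zeta) \cap A(\eta)}$. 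Surjectivity of the resulting homomorphism $\pi_s$ is automatic, since its range contains all the generators of $C^*_{{\rm Li},s}(\CC)$.
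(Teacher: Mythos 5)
Your proposal is correct and follows essentially the same route as the paper's proof: both reduce the claim, via the universal property of $C^*_{\rm Li}(\CC)$, to deriving (L4) and (L5) from (L1)--(L3) and (L7), using the key identity $p_{A(\zeta)}=v_\zeta^*v_\zeta$ (from (L7) applied to $\bar\zeta\zeta$), the zigzag concatenation rule, and \lemref{ortho proj} to kill the mismatched-source cases. The only differences are cosmetic (e.g.\ your $C^*$-identity detour in the empty-intersection case of (L4) is redundant, since $v_\zeta v_\eta^*=0$ already gives $p_{A(\zeta)}p_{A(\eta)}=v_\zeta^*(v_\zeta v_\eta^*)v_\eta=0$ directly).
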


\begin{proof}
To show this we assume that (L1) -- (L3) and (L7) hold 
for a family $\{v_\alpha\}_{\alpha \in \CC}$ of partial isometries and a family $\{ p_X\}_{X\in \mathcal{J}(\CC)}$ of projections.
 By the universal property of $C^*_{\rm Li}(\CC)$, it suffices to deduce that they satisfy (L4) and (L5).
For $\zeta = (\alpha_1,\beta_1, \ldots, \alpha_n,\beta_n) \in \mathcal{Z}_\CC$
 we set $v_\zeta := v_{\alpha_1}^* v_{\beta_1} \cdots v_{\alpha_n}^* v_{\beta_n}$.  It is immediate that $v_{\overline{\zeta}} = v_\zeta^*$. 
Since $\varphi_{\overline{\zeta}\zeta} = \text{id}_{A(\zeta)} = \text{id}_{A(\overline{\zeta}\zeta)}$, (L7) implies that 
\[v_{\overline{\zeta}\zeta} = p_{A(\overline{\zeta}\zeta)}= p_{A(\zeta)}.\] 
For  $\zeta, \zeta' \in \mathcal{Z}_\CC$ we also note that
\[ v_{\zeta} v_{\zeta'} = \begin{cases} v_{\zeta\zeta'} &\righttext{if } s(\zeta) = r(\zeta'), \\ \ 0 &\righttext{    otherwise.}\end{cases} \]
The first case of this equality is obvious, while the second case follows readily from the second assertion in Lemma \ref{ortho proj}.
Let now $\zeta_1, \zeta_2 \in \mathcal{Z}_\CC$. Assume first that  $s(\zeta_1) = r(\overline{\zeta_2})$. Then
\[ \varphi_{\overline{\zeta_1}\zeta_1 \overline{\zeta_2} \zeta_2} = 
\varphi_{\overline{\zeta_1}\zeta_1}\varphi_{\overline{\zeta_2} \zeta_2}=
\text{id}_{A(\zeta_1)}\text{id}_{A(\zeta_2)}=
\text{id}_{A(\zeta_1) \cap A(\zeta_2)}.\]
Thus 
\[
A(\overline{\zeta_1}\zeta_1 \overline{\zeta_2} \zeta_2) = 
\text{dom}(\varphi_{\overline{\zeta_1}\zeta_1 \overline{\zeta_2} \zeta_2})=
A(\zeta_1) \cap A(\zeta_2), 
\]
and $\varphi_{\overline{\zeta_1}\zeta_1 \overline{\zeta_2} \zeta_2} = \text{id}_{A(\overline{\zeta_1}\zeta_1 \overline{\zeta_2} \zeta_2)}$. 
Thus,  
(L7) gives that
\[
v_{\overline{\zeta_1}\zeta_1 \overline{\zeta_2} \zeta_2} = p_{A(\overline{\zeta_1}\zeta_1 \overline{\zeta_2} \zeta_2)},
\]
and we therefore get
\[
 p_{A(\zeta_1)} p_{A(\zeta_2)} 
= v_{\overline{\zeta_1}\zeta_1} v_{\overline{\zeta_2}\zeta_2} 
= v_{\overline{\zeta_1}\zeta_1\overline{\zeta_2}\zeta_2}
= p_{A(\overline{\zeta_1}\zeta_1\overline{\zeta_2}\zeta_2)}
= p_{A(\zeta_1) \cap A(\zeta_2).}
\]
Next, assume $s(\zeta_1) \neq  r(\overline{\zeta_2})$.
Then $s(\overline{\zeta_1}\zeta_1) \neq r(\overline{\zeta_2}\zeta_2)$, so 
\[p_{A(\zeta_1)} p_{A(\zeta_2)} =  v_{\overline{\zeta_1}\zeta_1} v_{\overline{\zeta_2}\zeta_2}= 0.\]
Moreover, we have $\text{id}_{A(\zeta_1) \cap A(\zeta_2)}=
 \varphi_{\overline{\zeta_1}\zeta_1} \varphi_{\overline{\zeta_2}\zeta_2}
 =\text{id}_{\varnothing},$
 so $A(\zeta_1) \cap A(\zeta_2) = \varnothing$. Hence, (L3) gives that $p_{A(\zeta_1) \cap A(\zeta_2)}=p_\varnothing =0$, and we see that $p_{A(\zeta_1)} p_{A(\zeta_2)} = p_{A(\zeta_1) \cap A(\zeta_2)}$ in this case too. 
 This shows that (L4) holds.

Similarly, let $\alpha \in \CC$ and $\zeta \in \mathcal{Z}_\CC$. Assume first that $s(\alpha) = s(\zeta)$. Then 
(L2) and (L7) give
\begin{align*}
v_\alpha p_{A(\zeta)} v_\alpha^*
&= v_{r(\alpha)} v_\alpha v_{\overline{\zeta}\zeta} v_\alpha^*v_{r(\alpha)}
= v_{(r(\alpha),\alpha)}v_{ \overline{\zeta} \zeta}v_{(\alpha,r(\alpha))}
= v_{(r(\alpha),\alpha) \overline{\zeta} \zeta (\alpha,r(\alpha))}\\
&= v_{\overline{\zeta(\alpha,r(\alpha))} \zeta(\alpha,r(\alpha))}
= p_{A(\zeta(\alpha,r(\alpha)))}.
\end{align*}
But
\[A(\zeta(\alpha,r(\alpha))) =\text{dom}(\varphi_{\zeta(\alpha,r(\alpha))})= \text{dom}(\varphi_\zeta \sigma_\alpha)
=\tau_\alpha(s(\alpha)\CC\cap A(\zeta))= \alpha A(\zeta),\]
so we get $v_\alpha p_{A(\zeta)} v_\alpha^* = p_{\alpha A(\zeta)}$. 

Next,  assume that $s(\alpha) \neq s(\zeta)$. Then $s(\alpha)\CC\cap A(\zeta) = \varnothing$ (because $r(s(\alpha)\CC) = s(\alpha)$ while $r(A(\zeta)) = s(\zeta)$). So
$ \alpha A(\zeta) =\tau_\alpha(s(\alpha)\CC\cap A(\zeta)) = \varnothing$, hence $p_{ \alpha A(\zeta)} = p_\varnothing = 0$. Moreover,
\[v_\alpha p_{A(\zeta)} v_\alpha^* = v_{(r(\alpha),\alpha)}v_{ \overline{\zeta} \zeta}v_{(\alpha,r(\alpha))} = 0\]
since $ s(\overline{\zeta} \zeta) = s(\zeta) \neq s(\alpha) = r(\alpha, r(\alpha))$. Thus we see that (L5) holds in this case too.
\end{proof}

The same argument shows the following as well.

\begin{lem}

There is a surjective homomorphism $\rho_s\: C^{*\,(\cup)}_{\rm Li}(\CC) \to C^{*\,(\cup)}_{{\rm Li},s}(\CC)$ carrying generators to generators. 

\end{lem}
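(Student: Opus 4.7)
The plan is to mimic the proof of Lemma \ref{lem pi s surjective} and then lift its conclusions from $\mathcal{J}(\CC)$ to $\mathcal{J}^{(\cup)}(\CC)$ using relation (L6). By the universal property of $C^{*\,(\cup)}_{\rm Li}(\CC)$, it suffices to verify that in $C^{*\,(\cup)}_{{\rm Li},s}(\CC)$ the canonical generators $\{v_\alpha\}_{\alpha\in\CC}$ and $\{p_X\}_{X\in\mathcal{J}^{(\cup)}(\CC)}$ satisfy (L1)--(L3), (L4)$^{(\cup)}$, (L5)$^{(\cup)}$, and (L6); all but (L4)$^{(\cup)}$ and (L5)$^{(\cup)}$ are part of the definition of $C^{*\,(\cup)}_{{\rm Li},s}(\CC)$.

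First I would observe that the computation in the proof of Lemma \ref{lem pi s surjective} uses only (L1)--(L3) and (L7), and therefore goes through verbatim inside $C^{*\,(\cup)}_{{\rm Li},s}(\CC)$ to yield (L4) and (L5) for ideals in the subfamily $\mathcal{J}(\CC)$. Next, to obtain (L4)$^{(\cup)}$, I would write $X=\bigcup_i X_i$ and $Y=\bigcup_j Y_j$ with $X_i, Y_j\in\mathcal{J}(\CC)$, and use (L6) together with the fact that the $p_{X_i}, p_{Y_j}$ generate a commutative $C^*$-subalgebra (so products distribute over joins) to compute
\[
p_X p_Y=\Big(\bigvee_i p_{X_i}\Big)\Big(\bigvee_j p_{Y_j}\Big)=\bigvee_{i,j}p_{X_i}p_{Y_j}=\bigvee_{i,j}p_{X_i\cap Y_j}=p_{\bigcup_{i,j}(X_i\cap Y_j)}=p_{X\cap Y},
\]
where the third equality uses (L4) on $\mathcal{J}(\CC)$ (which is closed under finite intersections) and the fourth equality uses (L6).

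The subtler step, which I expect to be the main obstacle, is (L5)$^{(\cup)}$: given $X=\bigcup_i X_i$ with $X_i\in\mathcal{J}(\CC)$, one must show that conjugation by $v_\alpha$ distributes over the finite join $\bigvee_i p_{X_i}$. I would handle this by inclusion--exclusion: expand
\[
\bigvee_i p_{X_i}=\sum_{\varnothing\ne S}(-1)^{|S|+1}\prod_{i\in S}p_{X_i}=\sum_{\varnothing\ne S}(-1)^{|S|+1}p_{\bigcap_{i\in S}X_i}
\]
using (L4) (and the closure of $\mathcal{J}(\CC)$ under finite intersections), apply (L5) termwise to get $v_\alpha p_{\bigcap_S X_i}v_\alpha^*=p_{\alpha\bigcap_S X_i}$, and then invoke the identity $\alpha\bigcap_{i\in S}X_i=\bigcap_{i\in S}\alpha X_i$ (a consequence of left cancellativity, which makes $\tau_\alpha$ injective on $s(\alpha)\CC$). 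Reassembling via inclusion--exclusion in reverse and using (L6) yields
\[
v_\alpha p_X v_\alpha^*=\bigvee_i p_{\alpha X_i}=p_{\bigcup_i\alpha X_i}=p_{\alpha X},
\]
the last equality because $\tau_\alpha$ commutes with unions. Along the way one must verify that each $\alpha X_i$ indeed lies in $\mathcal{J}(\CC)$, which follows by appending the two-entry zigzag $(\alpha,r(\alpha))$ to $\zeta$ when $X_i=A(\zeta)$ and $s(\alpha)=s(\zeta)$ (and noting $\alpha X_i=\varnothing$ otherwise). This completes the verification of all defining relations and yields the desired surjection.
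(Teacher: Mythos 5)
Your proof is correct and follows the same route as the paper, which disposes of this lemma with the single line ``The same argument shows the following as well,'' referring back to the proof of the surjection $\pi_s\: C^*_{\rm Li}(\CC)\to C^*_{{\rm Li},s}(\CC)$. The only difference is that you explicitly carry out the extension of (L4) and (L5) from $\mathcal{J}(\CC)$ to $\mathcal{J}^{(\cup)}(\CC)$ using (L6), the commutativity of the $p_{X_i}$ (itself a consequence of the derived (L4) on $\mathcal{J}(\CC)$), and inclusion--exclusion --- a step the paper leaves implicit --- and your verification of it, including the identities $\alpha\bigcap_i X_i=\bigcap_i\alpha X_i$ and $\alpha\bigcup_i X_i=\bigcup_i\alpha X_i$ coming from injectivity of $\tau_\alpha$, is sound.
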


Moreover we have the following result.

\begin{lem} \label{lem mu}

There is a surjective homomorphism $\mu\: C^{*\,(\cup)}_{{\rm Li},s}(\CC) \to \TT(\CC)$ carrying 
generators to generators
in the sense that each $v_\alpha$ is mapped to $t_{(r(\alpha),\alpha)}$,
and each $p_{A(\zeta)}$ is mapped to $t_{\overline{\zeta}
\zeta}$.
\end{lem}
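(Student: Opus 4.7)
The plan is to invoke the universal property of $C^{*\,(\cup)}_{{\rm Li},s}(\CC)$: it suffices to exhibit in $\TT(\CC)$ partial isometries $V_\alpha$ for $\alpha \in \CC$ and projections $P_X$ for $X \in \mathcal{J}^{(\cup)}(\CC)$ satisfying relations (L1)--(L3), (L6), and (L7). Set $V_\alpha := t_\alpha$, which is a partial isometry by Remark~\ref{remSpi} and agrees with $t_{(r(\alpha),\alpha)}$ since $t_{r(\alpha)} t_\alpha = t_\alpha$. For $X \in \mathcal{J}(\CC)$, pick any $\zeta \in \mathcal{Z}_\CC$ with $X = A(\zeta)$ and set $P_X := t_{\overline{\zeta}\zeta}$. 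Since $\varphi_{\overline{\zeta}\zeta} = {\rm id}_{A(\zeta)}$, the finitely join-preserving representation $\pi_t\:{\rm ZM}(\CC) \to \TT(\CC)$ from Theorem~\ref{FJ} yields $P_X = \pi_t({\rm id}_X)$, which simultaneously shows that $P_X$ depends only on $X$ and is a projection.

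For $X \in \mathcal{J}^{(\cup)}(\CC)$ I choose a decomposition $X = X_1 \cup \cdots \cup X_n$ with each $X_i \in \mathcal{J}(\CC)$ and set $P_X := \bigvee_{i=1}^n P_{X_i}$. The projections $P_{X_i}$ are $\pi_t$-images of commuting idempotents of $E({\rm ZM}(\CC))$, so they commute pairwise, and the join is a projection in $\TT(\CC)$. The main obstacle is independence of the decomposition: suppose $X = \bigcup_i X_i = \bigcup_j Y_j$ with $X_i, Y_j \in \mathcal{J}(\CC)$. Since $\mathcal{J}(\CC)$ is closed under finite intersection (Proposition~\ref{EZM}), each $X_i \cap Y_j$ lies in $\mathcal{J}(\CC)$, and $X_i \subseteq \bigcup_j Y_j$ forces $X_i = \bigcup_j (X_i \cap Y_j)$. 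In $E({\rm ZM}(\CC))$ the idempotent ${\rm id}_{X_i}$ is the least upper bound of $\{{\rm id}_{X_i \cap Y_j}\}_j$: it is clearly an upper bound, and any other upper bound ${\rm id}_Z$ in ${\rm ZM}(\CC)$ satisfies $X_i \cap Y_j \subseteq Z$ for each $j$, whence $X_i \subseteq Z$. Applying the finitely join-preserving $\pi_t$ yields $P_{X_i} = \bigvee_j P_{X_i \cap Y_j}$, so $\bigvee_i P_{X_i} = \bigvee_{i,j} P_{X_i \cap Y_j}$; by the symmetric argument this equals $\bigvee_j P_{Y_j}$.

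The remaining verifications are routine. For (L1), taking $\zeta = (s(\alpha), s(\alpha))$ gives $V_\alpha^* V_\alpha = t_{s(\alpha)} = t_{\overline{\zeta}\zeta} = P_{s(\alpha)\CC}$. For (L2), the case $s(\alpha) = r(\beta)$ follows from axiom (\ref{spi 2}) of Definition~\ref{spi rep}, and otherwise $t_\alpha t_\beta = t_\alpha t_{s(\alpha)} t_{r(\beta)} t_\beta = 0$ by orthogonality of the vertex projections. Relation (L3) holds because $P_\varnothing$ is an empty join. For (L6), given decompositions $X = \bigcup_i X_i$ and $Y = \bigcup_j Y_j$ into $\mathcal{J}(\CC)$-elements, the union $X \cup Y$ has the decomposition $\bigcup_i X_i \cup \bigcup_j Y_j$, so $P_{X \cup Y} = P_X \vee P_Y$ directly from the definition. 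For (L7), if $\varphi_\zeta = {\rm id}_{A(\zeta)}$ then $\varphi_\zeta = \varphi_{\overline{\zeta}\zeta}$, so property (S5) of the family $\{t_\zeta\}$ delivers $t_\zeta = t_{\overline{\zeta}\zeta} = P_{A(\zeta)}$. The universal property therefore produces $\mu$, and its surjectivity is immediate since the image already contains the generating family $\{t_\alpha : \alpha \in \CC\}$ of $\TT(\CC)$.
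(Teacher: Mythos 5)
Your construction is essentially the paper's: define $v'_\alpha$ and $p'_{A(\zeta)}$ as the indicated monomials in $\TT(\CC)$, check that the join defining $p'_X$ for $X\in\mathcal{J}^{(\cup)}(\CC)$ is independent of the chosen decomposition, verify (L1)--(L3), (L6), (L7), and invoke universality. Your well-definedness argument, routed through the join structure of $E({\rm ZM}(\CC))$ and the finitely join-preserving property of $\zeta\mapsto t_\zeta$, is a correct repackaging of the paper's direct appeal to (S3); the paper has already observed that these two formulations are equivalent.

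The one genuine issue is scope. The lemma carries only the standing hypothesis of this section (an arbitrary left cancellative small category), and Theorem~\ref{thm li algebras}(\ref{thm li algebras 2}) asserts that $\mu$ is an isomorphism with no finite alignment assumption, so the lemma is meant to hold in general. The paper's proof achieves this by using only the relations (S1)--(S5) for the family $\{t_\zeta\}$, which the discussion following Remark~\ref{existenceTT} guarantees without finite alignment. You instead cite Theorem~\ref{FJ} (for $\pi_t$), condition (\ref{spi 2}) of Definition~\ref{spi rep} and Remark~\ref{remSpi} (for (L2)), and the identity $t_\alpha^*t_\alpha=t_{s(\alpha)}$ (for (L1)) --- all of which are stated under the finite alignment hypothesis. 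As written, your argument therefore only proves the finitely aligned case. The repair is mechanical: each fact you need --- the multiplication rule and the vanishing for mismatched source and range (from (S1) and (S2)), well-definedness of $p'_{A(\zeta)}$ (from (S5)), and the join identity (from (S3)) --- holds in general, so replacing the finitely-aligned citations by the (S)-relations recovers the full statement. Surjectivity then also needs the extra remark that every $t_\zeta$, not just every $t_\alpha$, is a monomial in the images of the generators, since in the general case $\TT(\CC)$ is by definition generated by $\{t_\zeta:\zeta\in\mathcal{Z}_\CC\}$.
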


\begin{proof}
As pointed out after Remark \ref{existenceTT}, $\TT(\CC)$ is generated by the family $\{ t_\zeta : \zeta \in \mathcal{Z}_\CC \}$
which satisfy the relations 
(S1) -- (S4), hence also (S5).
We will first show that the relations (L1) -- (L3), (L6), and (L7) hold for certain families $\{v'_\alpha\}_{\alpha \in \CC}$ and $\{ p'_X\}_{X \in \mathcal{J}^{(\cup)}(\CC)}$ in $\TT(\CC)$.

For $\alpha \in \CC$ and $\zeta \in \mathcal{Z}_\CC$ we define $v'_\alpha := t_{(r(\alpha),\alpha)} $, $p'_\varnothing := 0$,  
 and $p'_{A(\zeta)} := t_{\overline{\zeta}\zeta}$. 
Note that if $A(\zeta) = A(\zeta')$ then $\varphi_{\overline{\zeta}\zeta} = \varphi_{\overline{\zeta'}\zeta'}$, so by (S5) we have $t_{\overline{\zeta}\zeta} = t_{\overline{\zeta'}\zeta'}$, and $p'_{A(\zeta)}$ is therefore well-defined.

More generally, for $X = \bigcup_{i=1}^m A(\zeta_i)$, where $\zeta_1, \ldots, \zeta_m \in \mathcal{Z}_\CC$, we
define $p'_X := \bigvee_{i=1}^m t_{\overline{\zeta_i}\zeta_i}$.  We must show that $p'_X$ is well-defined.  
Suppose that $X = \bigcup_{i=1}^m A(\zeta_i) = \bigcup_{j=1}^n A(\xi_j)$.    Then $A(\zeta_i) = \bigcup_j A(\zeta_i) \cap A(\xi_j) = \bigcup_j A(\overline{\zeta_i}\zeta_i \overline{\xi_j}\xi_j)$, and similarly $A(\xi_j) = \bigcup_i A(\overline{\zeta_i}\zeta_i \overline{\xi_j}\xi_j)$.  By (S3), it follows that $\bigvee_{i=1}^m t_{\overline{\zeta_i}\zeta_i} = \bigvee_i(\bigvee_j t_{\overline{\zeta_i}\zeta_i \overline{\xi_j}\xi_j}) = \bigvee_j(\bigvee_i t_{\overline{\zeta_i}\zeta_i \overline{\xi_j}\xi_j}) = \bigvee_j t_{\overline{\xi_j}\xi_j}$.

Let $\alpha \in \CC$. By (S1) and (S2) we have
\[
{v'}_\alpha^* v'_\alpha = t_{(r(\alpha),\alpha)}^* t_{(r(\alpha),\alpha)}
= t_{\overline{(r(\alpha),\alpha)}(r(\alpha),\alpha)}
= p'_{A(r(\alpha),\alpha)}
= p'_{s(\alpha)\CC},
\]
verifying (L1).  

Next, let $\alpha, \beta \in \CC$. Assume that $s(\alpha) = r(\beta)$.
 By (S1) we have $v'_\alpha v'_\beta = t_{(r(\alpha),\alpha)} t_{(r(\beta),\beta)} = t_{(r(\alpha),\alpha,r(\beta),\beta)}$.  We note that $\varphi_{(r(\alpha),\alpha,r(\beta),\beta)} = \varphi_{(r(\alpha),\alpha \beta)}$.  
 Using (S5) we get 
\[v'_\alpha v'_\beta  
\, = t_{(r(\alpha),\alpha,r(\beta),\beta)}
 = t_{(r(\alpha),\alpha \beta)} = v'_{\alpha \beta}.\] 
On the other hand, if  $s(\alpha) \neq r(\beta)$, then (S1) gives
\[v'_\alpha v'_\beta = t_{(r(\alpha),\alpha)} t_{(r(\beta),\beta)} = 0.\]
Thus we have verified (L2).
(L3) is satisfied by definition of $p'_\varnothing$. Our definition of $p'_X$ implies (L6), and (L7) follows immediately from (S4). 
Hence, by the universal property of $ C^{*\,(\cup)}_{{\rm Li},s}(\CC)$, we get that there is a homomorphism $\mu: C^{*\,(\cup)}_{{\rm Li},s}(\CC) \to \TT(\CC)$ which maps each $v_\alpha$ to $ v'_\alpha $, and each $p_X$ to $p'_X$. Since each $t_\zeta$ belongs to the range of $\mu$ (as it may be written as a monomial in the $v'_\alpha$ and their adjoints), and  $\TT(\CC)$ is generated by the $t_\zeta$, we get that $\mu$ is surjective.
\end{proof}

\begin{lem} \label{lem li-zm}
There is a surjective homomorphism $g$ from $C^*_{{\rm Li},s}(\CC)$ onto $C^*({\rm ZM}(\CC))$ carrying
generators to generators in the sense that each $v_\alpha$ is mapped to $t'_{(r(\alpha),\alpha)}$ 
and each $p_{A(\zeta)}$ is mapped to $t'_{\overline{\zeta}\zeta}$.
\end{lem}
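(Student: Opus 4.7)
The plan is to invoke the universal property of $C^*_{{\rm Li},s}(\CC)$ by exhibiting candidate elements $v'_\alpha, p'_X$ in $C^*({\rm ZM}(\CC))$ that satisfy the defining relations (L1)--(L3) and (L7), and then to verify surjectivity by showing the generators $\{t'_\zeta:\zeta\in\mathcal{Z}_\CC\}$ of $C^*({\rm ZM}(\CC))$ all lie in the image. The argument will parallel the proof of Lemma \ref{lem mu}, but with $C^*({\rm ZM}(\CC))$ and the relations (S1), (S2), (S5) replacing $\TT(\CC)$ and (S1)--(S4).

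First I will define $v'_\alpha := t'_{(r(\alpha),\alpha)}$ for $\alpha\in \CC$, set $p'_\varnothing := 0$, and for nonempty $X = A(\zeta) \in \mathcal{J}(\CC)$ set $p'_X := t'_{\overline{\zeta}\zeta}$. The first thing to check is that $p'_X$ is well defined: if $A(\zeta) = A(\zeta')$, then $\varphi_{\overline{\zeta}\zeta} = \mathrm{id}_{A(\zeta)} = \mathrm{id}_{A(\zeta')} = \varphi_{\overline{\zeta'}\zeta'}$, and (S5) yields $t'_{\overline{\zeta}\zeta} = t'_{\overline{\zeta'}\zeta'}$. Note also that when $A(\zeta) = \varnothing$, we have $\varphi_{\overline{\zeta}\zeta} = \mathrm{id}_\varnothing$ is the zero element of ${\rm ZM}(\CC)$, whose image under $\iota$ is $0$, so the two definitions for $p'_\varnothing$ are consistent.

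Next I will verify the relations. Relations (L1) and (L2) are proved exactly as in Lemma \ref{lem mu}, since they only use (S1), (S2), and (S5). Relation (L3) holds by definition. For (L7), let $\zeta = (\alpha_1, \beta_1, \ldots, \alpha_n, \beta_n) \in \mathcal{Z}_\CC$ with $\varphi_\zeta = \mathrm{id}_{A(\zeta)}$. Using (S2), we have $v'^*_{\alpha_i} = t'_{(\alpha_i, r(\alpha_i))}$, and then (S1) together with $r(\alpha_i) = r(\beta_i)$ and $\varphi_{(\alpha_i,r(\alpha_i),r(\beta_i),\beta_i)} = \varphi_{(\alpha_i,\beta_i)}$ (hence (S5)) gives $v'^*_{\alpha_i} v'_{\beta_i} = t'_{(\alpha_i,\beta_i)}$. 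Since $s(\beta_i) = s(\alpha_{i+1})$, a further application of (S1) yields
\[
v'^*_{\alpha_1} v'_{\beta_1} \cdots v'^*_{\alpha_n} v'_{\beta_n} = t'_{(\alpha_1,\beta_1,\ldots,\alpha_n,\beta_n)} = t'_\zeta.
\]
Because $\varphi_\zeta = \mathrm{id}_{A(\zeta)} = \varphi_{\overline{\zeta}\zeta}$, (S5) then gives $t'_\zeta = t'_{\overline{\zeta}\zeta} = p'_{A(\zeta)}$, establishing (L7). Thus the universal property of $C^*_{{\rm Li},s}(\CC)$ produces a homomorphism $g\: C^*_{{\rm Li},s}(\CC) \to C^*({\rm ZM}(\CC))$ with $g(v_\alpha) = v'_\alpha$ and $g(p_{A(\zeta)}) = p'_{A(\zeta)}$.

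For surjectivity, it suffices to show that every $t'_\zeta$ lies in the image of $g$. But the computation already performed shows $t'_\zeta = v'^*_{\alpha_1} v'_{\beta_1} \cdots v'^*_{\alpha_n} v'_{\beta_n}$, which is in the image, and the family $\{t'_\zeta : \zeta \in \mathcal{Z}_\CC\}$ together with $0$ generates $C^*({\rm ZM}(\CC))$. The main (and essentially the only) technical obstacle is keeping track of well-definedness of $p'_X$ together with the bookkeeping that translates the relation (L7) into a suitable identity via (S5); both are handled by the observation that in ${\rm ZM}(\CC)$, equality of zigzag maps $\varphi_\zeta = \varphi_{\zeta'}$ is all that (S5) requires, so the composite monomial on the left-hand side of (L7) can be collapsed to the single element $t'_\zeta$ without needing any further relations beyond those we have available.
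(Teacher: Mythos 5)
Your proposal is correct and follows essentially the same route as the paper: define $v'_\alpha = t'_{(r(\alpha),\alpha)}$ and $p'_{A(\zeta)} = t'_{\overline{\zeta}\zeta}$, check well-definedness and the relations (L1)--(L3), (L7) via (S1), (S2), (S5), invoke the universal property, and deduce surjectivity from the identity $t'_\zeta = v'^*_{\alpha_1}v'_{\beta_1}\cdots v'^*_{\alpha_n}v'_{\beta_n}$. The only cosmetic differences are that the paper derives (L1) from (L7) rather than verifying it directly, and it explicitly records that each $v'_\alpha$ is a partial isometry and each $p'_{A(\zeta)}$ a projection (immediate since $t'_\zeta = \iota(\varphi_\zeta)$ and $t'_{\overline{\zeta}\zeta} = {t'_\zeta}^*t'_\zeta$), a point worth stating before invoking the universal property.
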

\begin{proof}
For $\alpha \in \CC$ and  $\zeta \in \mathcal{Z}_\CC$ we define $V_\alpha := t'_{(r(\alpha),\alpha)}$, $P_{A(\zeta)} := t'_{\overline{\zeta}\zeta}$ (which is independent of the choice of $\zeta$ by (S5)), and $P_\varnothing := 0$.  We will first verify (L1) -- (L3) and (L7) for $V_\alpha$ and $P_{A(\zeta)}$.  By definition of $P_\varnothing$, (L3) holds.  Suppose that $\zeta = (\alpha_1, \beta_1, \ldots, \alpha_n, \beta_n) \in \mathcal{Z}_\CC$.
Then
\begin{align*}
V_{\alpha_1}^* V_{\beta_1} \cdots V_{\alpha_n}^* V_{\beta_n}
&= {t'}_{(r(\alpha_1),\alpha_1)}^* t'_{(r(\beta_1),\beta_1)} \cdots {t'}_{(r(\alpha_n),\alpha_n)}^* t'_{(r(\beta_n),\beta_n)} \\
&= t'_{(\alpha_1, r(\alpha_1), r(\beta_1), \beta_1, \ldots, \alpha_n, r(\alpha_n), r(\beta_n), \beta_n)}, \text{ by (S1) and (S2),} \\
&= t'_\zeta, \text{ by (S5).}
\end{align*}
Thus, if $\varphi_\zeta = \text{id}_{A(\zeta)}$, that is, $\varphi_\zeta =\varphi_{\overline{\zeta}\zeta}$, then by (S5) we have  \[ t'_\zeta = t'_{\overline{\zeta}\zeta} = P_{A(\zeta)},\]
and we get $V_{\alpha_1}^* V_{\beta_1} \cdots V_{\alpha_n}^* V_{\beta_n} = t'_\zeta = P_{A(\zeta)}$
verifying (L7).  As we observed earlier (in the remarks before Lemma \ref{lem pi s surjective}), (L7) implies (L1).  Now let $\alpha, \beta \in \CC$. If $s(\alpha) = r(\beta)$ then
\begin{align*}
V_\alpha V_\beta
&= t'_{(r(\alpha),\alpha)} t'_{(r(\beta),\beta)} \\
&= t'_{(r(\alpha),\alpha,r(\beta),\beta)}, \text{ by (S1),} \\
&= t'_{(r(\alpha),\alpha\beta)}, \text{ by (S5),} \\
&= V_{\alpha\beta}.
\end{align*}
 If $s(\alpha) \neq r(\beta)$, then $s((r(\alpha),\alpha)) \neq r((r(\beta),\beta))$, so (S1) gives that $V_\alpha V_\beta = t'_{(r(\alpha),\alpha)} t'_{(r(\beta),\beta)} = 0$. Thus we have verified (L2). 
Finally we note that each $V_\alpha$ is a partial isometry (since $V_\alpha = t'_{(r(\alpha),\alpha)}$) and that each $P_{A(\zeta)}$ is a projection (since $P_{A(\zeta)} = t'_{\overline{\zeta}\zeta} = {t'}^*_\zeta t'_\zeta$ and $t'_\zeta$ is a partial isometry).    
Hence, by the universal property of $C^*_{{\rm Li},s}(\CC)$,  we get that there is a homomorphism $g : C^*_{{\rm Li},s}(\CC) \to C^*({\rm ZM}(\CC))$ such that  $g(v_\alpha) = V_\alpha$ for each $\alpha \in \CC$ and $g(p_{A(\zeta)}) = P_{A(\zeta)}$ for each $\zeta \in \mathcal{Z}_\CC$. If we set $T_\zeta = v_{\alpha_1}^* v_{\beta_1} \cdots v_{\alpha_n}^* v_{\beta_n}$ for  $\zeta = (\alpha_1, \beta_1, \ldots, \alpha_n, \beta_n) \in \mathcal{Z}_\CC$, we get that $g(T_\zeta) = t'_\zeta$. Since $C^*({\rm ZM}(\CC))$ is generated by the $t'_\zeta$, it follows that $g$ is surjective. 
\end{proof}

Using the previous lemmas, and letting $q\:C^*({\rm ZM}(\CC))\to \TT(\CC)$ denote the canonical surjective homomorphism, which maps each $t_\zeta$ to $t'_\zeta$, it is not difficult to see that we have established the commutativity of the following diagram:
\[
\begin{tikzpicture}[scale=3]
\node (0_1) at (0,1) [rectangle] {$C^*_{\rm Li}(\CC)$};
\node (1_1) at (1,1) [rectangle] {$C^*_{{\rm Li},s}(\CC)$};
\node (0_0) at (0,0) [rectangle] {$C^{*\, (\cup)}_{\rm Li}(\CC)$};
\node (1_0) at (1,0) [rectangle] {$C^{*\, (\cup)}_{{\rm Li},s}(\CC)$};
\node (2_0) at (2,0) [rectangle] {$\TT(\CC)$};
\node (2_1) at (2,1) [rectangle] {$C^*({\rm ZM}(\CC))$};
\draw[-latex,thick] (0_1) -- (1_1) node[pos=0.5, inner sep=0.5pt, anchor=south] {$\pi_s$};
\draw[-latex,thick] (0_0) -- (1_0) node[pos=0.5, inner sep=0.5pt, anchor=south] {$\rho_s$};
\draw[-latex,thick] (0_1) -- (0_0) node[pos=0.5, inner sep=0.5pt, anchor=east] {$\pi^{(\cup)}$};
\draw[-latex,thick] (1_1) -- (1_0) node[pos=0.5, inner sep=0.5pt, anchor=east] {$\rho^{(\cup)}$};
\draw[-latex,thick] (1_0) -- (2_0) node[pos=0.5, inner sep=0.5pt, anchor=south] {$\mu$};
\draw[-latex,thick] (1_1) -- (2_1) node[pos=0.5, inner sep=0.5pt, anchor=south] {$g$};
\draw[-latex,thick] (2_1) -- (2_0) node[pos=0.5, inner sep=0.5pt, anchor=east] {$q$};
\end{tikzpicture}
\]
\begin{thm} \label{thm li algebras}

\begin{enumerate}
\item \label{thm li algebras 1} 
$g$ is an isomorphism.

\item \label{thm li algebras 2} $\mu$ is an isomorphism.

\item \label{thm li algebras 3} $\rho_s$ is an isomorphism if $\CC$ is finitely aligned.

\item \label{thm li algebras 4} 
$\pi^{(\cup)}$, $\rho^{(\cup)}$ and $q$
 are not generally one-to-one, even if $\CC$ is finitely aligned.

\item \label{thm li algebras 5} $\pi_s$ is not generally one-to-one, even if $\CC$ is finitely aligned.

\end{enumerate}

\end{thm}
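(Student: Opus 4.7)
For parts (i) and (ii), my strategy is to build the inverses directly using universal properties. For (i), define $\pi\:{\rm ZM}(\CC)\to C^*_{{\rm Li},s}(\CC)$ by $\varphi_\zeta\mapsto v_\zeta:=v_{\alpha_1}^*v_{\beta_1}\cdots v_{\alpha_n}^*v_{\beta_n}$ for $\zeta=(\alpha_1,\beta_1,\dots,\alpha_n,\beta_n)\in\mathcal{Z}_\CC$, and ${\rm id}_\varnothing\mapsto 0$. The crucial step is well-definedness: if $\varphi_\zeta=\varphi_{\zeta'}$ then both concatenations $\bar\zeta\zeta'$ and $\zeta\bar{\zeta'}$ have identity zigzag maps on their respective domains, so (L7) applied to these and to $\bar\zeta\zeta$, $\zeta\bar\zeta$ yields $v_\zeta^*v_{\zeta'}=p_{A(\zeta)}=v_\zeta^*v_\zeta$ and the corresponding identities for the final projections; expanding $(v_\zeta-v_{\zeta'})^*(v_\zeta-v_{\zeta'})$ then telescopes to $0$. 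After routine verification that $\pi$ preserves products and adjoints, the extension $h\:C^*({\rm ZM}(\CC))\to C^*_{{\rm Li},s}(\CC)$ is seen to invert $g$ by comparison on generators. For (ii), the same formula defines a representation $\pi\:{\rm ZM}(\CC)\to C^{*\,(\cup)}_{{\rm Li},s}(\CC)$; the new ingredient is that $\pi$ is now finitely join-preserving, since (L7) identifies $\pi({\rm id}_{A(\zeta_j)})$ with $p_{A(\zeta_j)}$ and (L6) yields $p_{\bigcup_j A(\zeta_j)}=\bigvee_j p_{A(\zeta_j)}$. The universal property of $\TT(\CC)=C^*_{\rm FJ}({\rm ZM}(\CC))$ from \remref{existenceTT} then produces $\nu\:\TT(\CC)\to C^{*\,(\cup)}_{{\rm Li},s}(\CC)$ inverse to $\mu$.

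For (iii), I would construct a homomorphism $\eta\:\TT(\CC)\to C^{*\,(\cup)}_{\rm Li}(\CC)$ inverse to $\mu\circ\rho_s$. First derive two consequences of the defining relations of $C^{*\,(\cup)}_{\rm Li}(\CC)$: the final-projection formula $v_\alpha v_\alpha^*=p_{\alpha\CC}$ (from (L5)$^{(\cup)}$ with $X=s(\alpha)\CC$) and the conjugation identity $v_\alpha^*p_Yv_\alpha=p_{\alpha^{-1}Y}$ for $Y\in\mathcal{J}^{(\cup)}(\CC)$ (obtained by writing $p_Yv_\alpha=p_{Y\cap\alpha\CC}v_\alpha$ and combining (L5)$^{(\cup)}$ with (L4)$^{(\cup)}$; here one checks that $\alpha^{-1}Y\in\mathcal{J}^{(\cup)}(\CC)$ by exhibiting $\alpha^{-1}A(\zeta)$ as the domain of the zigzag map attached to $\zeta(r(\alpha),\alpha)$). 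Next show that $\alpha\mapsto v_\alpha$ is a representation of $\CC$ in the sense of \defnref{spi rep}: (\ref{spi 1}) and (\ref{spi 2}) follow at once from (L1) and (L2), while (\ref{spi 3}) is exactly where the finitely aligned hypothesis enters, through
$p_{\alpha\CC}p_{\beta\CC}=p_{\alpha\CC\cap\beta\CC}=p_{\bigcup_{\gamma\in\alpha\vee\beta}\gamma\CC}=\bigvee_{\gamma\in\alpha\vee\beta}p_{\gamma\CC}$
by (L4)$^{(\cup)}$ and (L6). The universal property of $\TT(\CC)$ now furnishes $\eta$, and to see $\eta\circ\mu\circ\rho_s={\rm id}$ one observes $\eta(t_\zeta)=v_\zeta$ for every $\zeta\in\mathcal{Z}_\CC$ and proves by induction on zigzag length that $v_\zeta^*v_\zeta=p_{A(\zeta)}$ inside $C^{*\,(\cup)}_{\rm Li}(\CC)$, the inductive step at $\zeta=\zeta'(\alpha,\beta)$ reading $v_\zeta^*v_\zeta=v_\beta^*v_\alpha p_{A(\zeta')}v_\alpha^*v_\beta=v_\beta^*p_{\alpha A(\zeta')}v_\beta=p_{\beta^{-1}\alpha A(\zeta')}=p_{A(\zeta)}$. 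This derivation of ``(L7)'' inside $C^{*\,(\cup)}_{\rm Li}(\CC)$ without assuming it is the main obstacle in the proof of (iii), and it is precisely here that finite alignment is essential.

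For (iv) and (v), the plan is to exhibit explicit counterexamples within the class of finitely aligned $2$-graphs. For (iv), the idea is to produce a representation of $C^*_{\rm Li}(\CC)$ (resp.\ $C^*_{{\rm Li},s}(\CC)$) in which, for some decomposition $A(\zeta)=A(\zeta_1)\cup A(\zeta_2)$ in $\mathcal{J}(\CC)$, the projection $P_{A(\zeta)}$ strictly dominates the join $P_{A(\zeta_1)}\vee P_{A(\zeta_2)}$: concretely, one enlarges a faithful representation by adjoining ``ghost'' summands to $P_{A(\zeta)}$ that are orthogonal to those of $P_{A(\zeta_1)}$ and $P_{A(\zeta_2)}$. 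Such a representation cannot factor through the algebra where (L6) is imposed, witnessing the failure of injectivity of $\pi^{(\cup)}$ (resp.\ $\rho^{(\cup)}$); the non-injectivity of $q$ then follows from the commutative diagram together with (i) and (ii). For (v), one exhibits a representation satisfying (L1)--(L5) but failing (L7) for some zigzag $\zeta$ with $\varphi_\zeta={\rm id}_{A(\zeta)}$; the representation $T$ of \exref{not(3)}---which satisfies \exe{exe 1}--\exe{exe 5} but not \exe{exe 6}---is a close template and, with the appropriate translation between the higher-rank-graph generators and the $v_\alpha$'s together with a check of finite alignment, should produce the required counterexample.
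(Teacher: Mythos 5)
Your treatment of parts (i)--(iii) is correct and follows essentially the same route as the paper: in each case the inverse homomorphism is produced from the universal property of the target algebra, and the key computations coincide. In (i), your well-definedness argument via expanding $(v_\zeta-v_{\zeta'})^*(v_\zeta-v_{\zeta'})$ is a clean variant of the paper's, which instead derives $T_\zeta^*T_{\zeta'}=T_\zeta^*T_\zeta=T_{\zeta'}^*T_{\zeta'}$ and $T_\zeta T_{\zeta'}^*=T_{\zeta'}T_{\zeta'}^*$ from (L7) and concludes $T_\zeta=T_\zeta T_\zeta^*T_\zeta=T_{\zeta'}$; both work. In (iii), your closing induction showing $v_\zeta^*v_\zeta=p_{A(\zeta)}$ inside $C^{*\,(\cup)}_{\rm Li}(\CC)$ is a useful explicit check that the paper leaves implicit; note, however, that this induction uses only the relations (L1)--(L5)$^{(\cup)}$ and does not need finite alignment. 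Finite alignment enters exactly where you first located it --- in verifying condition (\ref{spi 3}) of \defnref{spi rep}, so that $\alpha\vee\beta$ is a finite independent set and (L6) applies, and in the fact that for finitely aligned $\CC$ the Toeplitz algebra is universal for representations of $\CC$ rather than for zigzag families --- so your later remark that finite alignment is ``essential'' for the induction itself is misplaced, though harmless.

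The genuine gap is in parts (iv) and (v). These are existence statements, so the counterexamples \emph{are} the proof, and you have only described a strategy. For (iv) one must exhibit a concrete finitely aligned $\CC$ together with explicit partial isometries $\{V_\mu\}$ and projections $\{p_X\}_{X\in\mathcal{J}(\CC)}$ satisfying (L1)--(L5) (and also (L7), to handle $\rho^{(\cup)}$) while violating (L6). The delicate point is that the projections attached to the non-principal constructible ideals --- those of the form $\alpha^{-1}\beta\CC$ or $\alpha\alpha^{-1}\beta\CC$ --- are not determined by the $V_\mu$; they must be chosen (the paper takes them to be full vertex projections, which is precisely where your ``ghost'' summands live) and then (L4) and (L5) must be verified against that choice. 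That verification, over the full list of constructible ideals, is the substance of the proof and is absent from your sketch. Similarly for (v): the paper builds a $2$-graph with two parallel commuting squares and inserts a nontrivial unitary $U$ into one component of $V_{\beta_2}$, so that $v_{\beta_2}^*v_{\alpha_2}v_{\alpha_1}^*v_{\beta_1}$ acts as $U\neq 1$ on a summand of $p_{A(\zeta)}$ even though $\varphi_\zeta={\rm id}_{A(\zeta)}$, while (L1)--(L5) still hold. Your proposal to adapt the representation of \exref{not(3)} is not obviously workable as stated: that category is an infinite $2$-graph with infinitely many constructible ideals for which all the $p_X$ would have to be defined and (L4)--(L5) checked, and nothing in your sketch addresses this. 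Until explicit categories and explicit families $\{V_\mu\}$, $\{p_X\}$ are written down and the relations verified, (iv) and (v) remain unproved.
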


\begin{proof}

\eqref{thm li algebras 1}:
We will  show that $g$ has an inverse $f\: C^*({\rm ZM}(\CC)) \to C^*_{{\rm Li},s}(\CC)$.    For $\zeta = (\alpha_1, \beta_1, \ldots, \alpha_n, \beta_n) \in \mathcal{Z}_\CC$ we set $T_\zeta = v_{\alpha_1}^* v_{\beta_1} \cdots v_{\alpha_n}^* v_{\beta_n} \in C^*_{{\rm Li},s}(\CC)$. We are assuming (L1) -- (L3) and (L7) for the $v_\alpha$ and the $p_X$ with $\alpha \in \CC$ and $X \in \mathcal{J}(\CC)$, and we are going to deduce (S1),
(S2), and (S5)
 for the $T_\zeta$. It follows readily from the definition of $T_\zeta$ and Lemma \ref{ortho proj} that (S1) and (S2) hold.  If $\zeta \in  \mathcal{Z}_\CC$, then  $ \varphi_{\overline{\zeta}\zeta} = \text{id}_{A(\zeta)} =  \text{id}_{A(\overline{\zeta}\zeta)}$, so using (L7) we get 
\[ T_\zeta^*T_\zeta = T_{\overline{\zeta}\zeta}= p_{A(\overline{\zeta}\zeta)} = p_{A(\zeta)},\]
which implies that $T_\zeta$ is a partial isometry.
Next, suppose that $\zeta, \zeta' \in  \mathcal{Z}_\CC$ and $\varphi_\zeta = \varphi_{\zeta'}$. 
Then $A(\zeta) = A(\zeta')$ and $ \text{id}_{A(\zeta)} =  \text{id}_{A(\overline{\zeta}\zeta)} =  \varphi_{\overline{\zeta}} \varphi_\zeta =
 \varphi_{\overline{\zeta}} \varphi_{\zeta'} =  \varphi_{\overline{\zeta}\zeta'}$. Thus
 $A(\zeta) = A(\overline{\zeta}\zeta')$ and  $\varphi_{\overline{\zeta}\zeta'} =  \text{id}_{A(\overline{\zeta}\zeta')}$.  Using (L7) we get
 \begin{align*}
 T_\zeta^* T_{\zeta'} &= T_{\overline{\zeta}\zeta'} = p_{A(\overline{\zeta}\zeta')} \\
 &=  p_{A(\zeta)} = T_\zeta^*T_\zeta \\
 & = p_{A(\zeta')} = T_{\zeta'}^*T_{\zeta'}.
 \end{align*}
 Now, since $\varphi_{\overline{\zeta}} = \varphi_{\overline{\zeta'}}$, we get from what we just have proved that we also
 have $ T_{\overline{\zeta}}^* T_{\overline{\zeta'}} = T_{\overline{\zeta'}}^*T_{\overline{\zeta'}}$, that is, 
 $T_\zeta T_{\zeta'}^* = T_{\zeta'}T_{\zeta'}^*$.
Since $T_\zeta$ and $T_{\zeta'}$ are partial isometries, we get that 
\[T_\zeta =T_\zeta T^*_\zeta T_\zeta = T_\zeta T^*_{\zeta'} T_{\zeta'} = T_{\zeta'} T^*_{\zeta'} T_{\zeta'} = T_{\zeta'},\] verifying (S5).  Hence it follows from the universal property of $C^*({\rm ZM}(\CC))$ that there is a homomorphism $f: C^*({\rm ZM}(\CC)) \to C^*_{{\rm Li},s}(\CC)$ satisfying $f(t'_\zeta) = T_\zeta$ for every  $\zeta \in \mathcal{Z}_\CC$. Since $g(T_\zeta) = t'_\zeta$ for every  $\zeta \in \mathcal{Z}_\CC$, we get that $f$ is the inverse of $g$.
 
\noindent
\eqref{thm li algebras 2}:  
We will construct a homomorphism inverse to $\mu$.  In $C^{*\, (\cup)}_{{\rm Li},s}(\CC)$ define $\{T_\zeta : \zeta \in \mathcal{Z}_\CC\}$ as follows:  for $\zeta = (\alpha_1,\beta_1, \ldots, \alpha_n,\beta_n) \in \mathcal{Z}_\CC$ let $T_\zeta = v_{\alpha_1}^* v_{\beta_1} \cdots v_{\alpha_n}^* v_{\beta_n}$.  We are here assuming (L1) -- (L3), (L6) and (L7) for the $v_\alpha$ and the $p_X$ with $\alpha \in \CC$ and $X \in \mathcal{J}^{\cup}(\CC)$; from the universal property of $\TT(\CC)$ we see that it suffices to deduce (S1) -- (S4) for the $T_\zeta$.  It follows from our definition of $T_\zeta$ that (S1) and (S2) hold.  We next claim that for $\zeta \in \mathcal{Z}_\CC$ we have $p_{A(\zeta)} = T_\zeta^* T_\zeta$.  This follows from (L7) just as in the previous proof.  Note also that by induction, (L6) holds for finite unions.  Now if $A(\zeta) = \bigcup_{i=1}^n A(\zeta_i)$, then
\[
T_\zeta^* T_\zeta
= p_{A(\zeta)}
= p_{\bigcup_{i=1}^n A(\zeta_i)} 
= \bigvee_{i=1}^n p_{A(\zeta_i)}
= \bigvee_{i=1}^n T_{\zeta_i}^* T_{\zeta_i},
\]
establishing (S3).  Finally, (L7) is equivalent to (S4).

\noindent
\eqref{thm li algebras 3}:  Suppose that $\CC$ is finitely aligned.  Using part \eqref{thm li algebras 2}, it suffices to construct a homomorphism inverse to $\mu \circ \rho_s$.  
By the universal property of $\TT(\CC)$ in the finitely aligned case, we need only verify that the map $T:\CC \to C^{*\, (\cup)}_{\rm Li}(\CC)$ defined by $T_\alpha = v_\alpha$ is a representation of $\CC$, i.e., satisfies (1) -- (3) in  Definition \ref{spi rep}.
Lemma \ref{ortho proj} gives that $v_u$ is a projection for each $u \in \CC^0$, so
 it follows from (L1) that $v_u = v_u^* v_u = p_{u \CC}$.  Now it follows that
\[
T_\alpha^* T_\alpha = v_\alpha^* v_\alpha = p_{s(\alpha)\CC} = v_{s(\alpha)} = T_{s(\alpha)}
\]
for each $\alpha \in \CC$, proving (1).  Next, (2) follows immediately from (L2).  Finally,  let $\alpha, \beta \in \CC$. Note that by (L1) and (L5) we have
\[
v_\alpha v_\alpha^* =  v_\alpha (v_\alpha^* v_\alpha) v_\alpha^* = v_\alpha p_{s(\alpha)\CC} v_{\alpha}^* = p_{\alpha \CC}.
\]
Then
\begin{align*}
T_\alpha T_\alpha^* T_\beta T_\beta^*
&= v_\alpha v_\alpha^* v_\beta v_\beta^*
= p_{\alpha \CC} p_{\beta \CC}
= p_{\alpha \CC \cap \beta \CC}, \text{ by (L4),} \\
&= p_{\bigcup_{\gamma \in \alpha \vee \beta} \gamma \CC}
= \bigvee_{\gamma \in \alpha \vee \beta} p_{\gamma \CC}, \text{ by (L6),} \\
&= \bigvee_{\gamma \in \alpha \vee \beta} T_\gamma T_\gamma^*,
\end{align*}
verifying (3).

\noindent
\eqref{thm li algebras 4}:  Let $\CC$ be the following finitely aligned left cancellative small category (actually a 2-graph):
\[
\begin{tikzpicture}[scale=2]

\node (0_1) at (0,1) [rectangle] {$y$};
\node (0_0) at (0,0) [rectangle] {$u$};
\node (1_1) at (1,1) [rectangle] {$v$};
\node (1_0) at (1,0) [rectangle] {$x$};

\draw[-latex,thick] (1_0) -- (0_0) node[pos=0.5, inner sep=0.5pt, anchor=north] {$\alpha$};
\draw[-latex,thick] (0_1) -- (0_0) node[pos=0.5, inner sep=0.5pt, anchor=east] {$\beta$};
\draw[-latex,thick] (1_1) -- (1_0) node[pos=0.5, inner sep=0.5pt, anchor=west] {$\gamma_i$};
\draw[-latex,thick] (1_1) -- (0_1) node[pos=0.5, inner sep=0.5pt, anchor=south] {$\delta_i$};

\end{tikzpicture}
\]
where $i = 1$, 2, and $\alpha \gamma_i = \beta \delta_i$.  We construct a representation $\{V_\mu : \mu \in \CC \}$, $\{p_X : X \in \mathcal{J}(\CC) \}$ satisfying (L1) -- (L5), but not (L6).  We let all Hilbert spaces not defined by the following equations be isomorphic to some fixed Hilbert space (of arbitrary dimension, e.g.~dimension one).
\begin{align*}
H_x &:= H_{\gamma_1} \oplus H_{\gamma_2} \oplus H_x' \\
H_y &:= H_{\delta_1} \oplus H_{\delta_2} \oplus H_y' \\
H_u &:= H_{\alpha \gamma_1} \oplus H_{\alpha \gamma_2} \oplus H_u'.
\end{align*}
For each edge $\mu$ in $\CC$ we will let $V_\mu$ be a partial isometry with initial space $H_{s(\mu)}$.  Choose these so that
\begin{align*}
V_{\gamma_i}(H_v) &= H_{\gamma_i} \\
V_{\delta_i}(H_v) &= H_{\delta_i} \\
V_\alpha(H_{\gamma_i}) &= H_{\alpha \gamma_i} \\
V_\alpha(H_x') &= H_u' \\
V_\beta|_{H_{\delta_i}} &= V_\alpha V_{\gamma_i} V_{\delta_i}^* \\
V_\beta (H_y') &= H_u' \\
V_w &= I_{H_w},\ w \in \CC^0 \\
V_{\alpha \gamma_i} &(= V_{\beta \delta_i}) = V_\alpha V_{\gamma_i}.
\end{align*}
It is straightforward to check that $V_\mu V_\nu = V_{\mu \nu}$ if $s(\mu) = r(\nu)$, and equals 0 otherwise.  Thus (L1) and (L2) are satisfied.  In order to consider (L3) -- (L6) we must define the projections associated to constructible right ideals.  First we list all such ideals (apart from the empty set):
\begin{align*}
u \CC &= \{ u, \alpha, \beta, \alpha \gamma_1, \alpha \gamma_2 \} \\
\alpha \CC &= \{ \alpha, \alpha \gamma_1, \alpha \gamma_2 \} \\
\beta \CC &= \{ \beta, \alpha \gamma_1, \alpha \gamma_2 \} \\
\alpha \alpha^{-1} \beta \CC &= \{ \alpha \gamma_1, \alpha \gamma_2 \} \\
\alpha \gamma_i \CC &= \{ \alpha \gamma_i \} \\
x \CC &= \{ x, \gamma_1, \gamma_2 \} \\
\alpha^{-1} \beta \CC &= \{ \gamma_1, \gamma_2 \} \\
\gamma_i \CC &= \{ \gamma_i \} \\
y \CC &= \{ y, \delta_1, \delta_2 \} \\
\beta^{-1} \alpha \CC &= \{ \delta_1, \delta_2 \} \\
\delta_i \CC &= \{ \delta_i \} \\
v \CC &= \{ v \}.
\end{align*}
For $X = \mu \CC$ with $\mu \in \CC$ we let $p_X := V_\mu V_\mu^*$.  We let $p_\varnothing = 0$ (so that (L3) holds), and for the remaining ideals we set
\begin{align*}
p_{\alpha^{-1} \beta \CC} &= V_x \\
p_{\beta^{-1} \alpha \CC} &= V_y \\
p_{\alpha \alpha^{-1} \beta \CC} &= V_u.
\end{align*}
Note that $V_\alpha V_\alpha^* = V_{r(\alpha)}$ and $V_\beta V_\beta^* = V_{r(\beta)}$.  Using this it is straightforward to verify (L4) and (L5).  Also note that in this example, $\mathcal{J}(\CC) = \mathcal{J}^{(\cup)}(\CC)$.  Moreover,
\[
p_{\gamma_1\CC} + p_{\gamma_2\CC} = I_{H_{\gamma_1} \oplus H_{\gamma_2}} \not= I_{H_x} = (T_\alpha^{-1} T_\beta) (T_\alpha^{-1} T_\beta)^* = p_{\alpha^{-1}\beta \CC} = p_{\gamma_1\CC \cup \gamma_2\CC},
\]
so that (L6) does not hold.  Therefore $\pi^{(\cup)}$ is not one-to-one for this example.

It is also straightforward to verify that if $\zeta \in \mathcal{Z}_\CC$ is such that $\varphi_\zeta = \text{id}_{A(\zeta)}$, and $A(\zeta) \not= \varnothing$, then $\zeta$ must be a concatenation of zigzags from the following list:  $(\mu,\mu)$ for $\mu \in \CC$, $(\alpha,\beta,\beta,\alpha)$, $(\beta,\alpha,\alpha,\beta)$, $(\alpha,r(\alpha),r(\alpha),\alpha)$, $(r(\alpha),\alpha,\alpha,r(\alpha))$, and similarly for $\beta$.  It is easily seen that $V_\zeta = p_{A(\zeta)}$ for these, and hence (L7) holds.  Therefore $\rho^{(\cup)}$ is not one-to-one for this example. 
Since $g$ and $\mu$ are isomorphisms, this implies that $q$ is not one-to-one for this example.

\noindent
\eqref{thm li algebras 5}:  Let $\CC$ be the following finitely aligned left cancellative small category (actually a 2-graph):
\[
\begin{tikzpicture}[scale=2]

\node (0_1) at (0,1) [rectangle] {$u_1$};
\node (1_2) at (1,2) [rectangle] {$y$};
\node (1_1) at (1,1) [rectangle] {$v$};
\node (1_0) at (1,0) [rectangle] {$x$};
\node (2_1) at (2,1) [rectangle] {$u_2$};

\draw[-latex,thick] (1_0) -- (0_1) node[pos=0.5, inner sep=0.5pt, anchor=north east] {$\alpha_1$};
\draw[-latex,thick] (1_0) -- (2_1) node[pos=0.5, inner sep=0.5pt, anchor=north west] {$\alpha_2$};
\draw[-latex,thick] (1_2) -- (0_1) node[pos=0.5, inner sep=0.5pt, anchor=south east] {$\beta_1$};
\draw[-latex,thick] (1_2) -- (2_1) node[pos=0.5, inner sep=0.5pt, anchor=south west] {$\beta_2$};
\draw[-latex,thick] (1_1) -- (1_0) node[pos=0.5, inner sep=0.5pt, anchor=west] {$\gamma_i$};
\draw[-latex,thick] (1_1) -- (1_2) node[pos=0.5, inner sep=0.5pt, anchor=west] {$\delta_i$};

\end{tikzpicture}
\]
where $1 \le i \le n$, $n > 1$, and with identifications $\alpha_j \gamma_i = \beta_j \delta_i$ for all $i$, $j$.  We construct a representation $\{V_\mu : \mu \in \CC \}$, $\{p_X : X \in \mathcal{J}(\CC) \}$ satisfying (L1) -- (L5), but not (L7).  We let all Hilbert spaces not defined by the following equations be isomorphic to some fixed Hilbert space (of arbitrary dimension, e.g.~dimension one).
\begin{align*}
H_x &:= \bigoplus_i H_{\gamma_i} \oplus H_x' \\
H_y &:= \bigoplus_i H_{\delta_i} \oplus H_y' \\
H_{u_j} &:= \bigoplus_i H_{\alpha_j \gamma_i} \oplus H_j',\ j=1,\; 2.
\end{align*}
For each edge $\mu$ in $\CC$ we will let $V_\mu$ be a partial isometry with initial space $H_{s(\mu)}$.  Choose these so that
\begin{align*}
V_{\gamma_i}(H_v) &= H_{\gamma_i} \\
V_{\delta_i}(H_v) &= H_{\delta_i} \\
V_{\alpha_j}(H_{\gamma_i}) &= H_{\alpha_j \gamma_i} \\
V_{\beta_j}|_{H_{\delta_i}} &= V_{\alpha_j} V_{\gamma_i} V_{\delta_i}^* \\
V_{\alpha_j}(H_x') &= H_j' \\
V_{\beta_1}(H_y') &= H_j' \\
V_{\beta_2}|_{H_y'} &= V_{\alpha_2} V_{\alpha_1}^* V_{\beta_1} U^*,
\end{align*}
where $U \in \mathcal{U}(H_y')$ is a nontrivial (partial) unitary operator.  Now we define $V_w := I_{H_w}$ for each vertex $w \in \CC^0$, and  $V_{\alpha_j \gamma_i} := V_{\alpha_j} V_{\gamma_i}$ and $V_{\beta_j \delta_i} := V_{\beta_j} V_{\delta_i}$.  It is straightforward to check that $V_\mu V_\nu = V_{\mu \nu}$ if $s(\mu) = r(\nu)$, and equals 0 otherwise.  Thus (L1) and (L2) are satisfied.  Also note that
\[
V_{\beta_2}^* V_{\alpha_2} V_{\alpha_1} V_{\beta_1}^*|_{H_y'}
= (U V_{\beta_1}^* V_{\alpha_1} V_{\alpha_2}^*) V_{\alpha_2} V_{\alpha_1}^* V_{\beta_1}
= U,
\]
while $\varphi_{(\beta_2,\alpha_2,\alpha_1,\beta_1)} = \text{id}_{\{ \delta_i : 1 \le i \le n \}}$.  Thus (L7) does not hold.  In order to verify (L3), (L4), and (L5) we must define the projections associated to constructible ideals.  First we list all such ideals (apart from the empty set):
\begin{align*}
u_j \CC &= \{ u_j, \alpha_j, \beta_j, \alpha_j\gamma_1, \ldots, \alpha_j\gamma_n \} \\
\alpha_j \CC &= \{ \alpha_j, \alpha_j\gamma_1, \ldots, \alpha_j\gamma_n \} \\
\beta_j \CC &= \{ \beta_j, \alpha_j\gamma_1, \ldots, \alpha_j\gamma_n \} \\
\alpha_j \alpha_j^{-1} \beta_j \CC &= \{ \alpha_j\gamma_1, \ldots, \alpha_j\gamma_n \} \\
\alpha_j\gamma_i \CC &= \{ \alpha_j \gamma_i \} \\
x \CC &= \{ x, \gamma_1, \ldots, \gamma_n \} \\
y \CC &= \{ y, \delta_1, \ldots, \delta_n \} \\
\alpha_1^{-1} \beta_1 \CC &(= \alpha_2^{-1} \beta_2 \CC) = \{ \gamma_1, \ldots, \gamma_n \} \\
\beta_1^{-1} \alpha_1 \CC &(= \beta_2^{-1} \alpha_2 \CC) = \{ \delta_1, \ldots, \delta_n \} \\
\gamma_i \CC &= \{ \gamma_i \} \\
\delta_i \CC &= \{ \delta_i \} \\
v \CC &= \{ v \}.
\end{align*}
For $X = \mu \CC$ with $\mu \in \CC$ we let $p_X := V_\mu V_\mu^*$.  We let $p_\varnothing = 0$ (so that (L3) holds), and for the remaining ideals we set
\begin{align*}
p_{\alpha_j^{-1}\beta_j \CC} &:= V_x \\
p_{\beta_j^{-1} \alpha_j \CC} &:= V_y \\
p_{\alpha_j \alpha_j^{-1} \beta_j \CC} &:= V_{u_j}.
\end{align*}
Note that $V_{\alpha_j}V_{\alpha_j}^* = V_{r(\alpha_j)}$ and $V_{\beta_j} V_{\beta_j}^* = V_{r(\beta_j)}$.  Using this it is straightforward to verify (L4) and (L5).

(We mention that if $n=1$, this example does not contradict \eqref{thm li algebras 3}.  The reason is that in that case, $\alpha_1^{-1} \beta_1 \CC = \{ \gamma_1 \} = \gamma_1 \CC$, but $p_{\alpha_1^{-1} \beta_1 \CC} \not= p_{\gamma_1 \CC}$.  Thus if $n=1$ the operators $p_X$ are not well-defined.)
\end{proof}

\begin{rem} When $\mathcal{C}$ is a submonoid of a group, Norling has shown in \cite[Proposition 3.26]{norling}
 that there is an isomomorphism from $C_{{\rm Li},s}^*(\mathcal{C})$ onto $C^*(I_\ell(\mathcal{C}))$.  Since the left inverse hull $
 I_\ell(\mathcal{C})$
 coincides with ${\rm ZM}(\mathcal{C})$ in this case, Theorem \ref{thm li algebras} (i) generalizes this result.
\end{rem}

With a bit more work we may modify 
the examples exhibited in the proofs of Theorem \ref{thm li algebras} \eqref{thm li algebras 4} and \eqref{thm li algebras 5}
 to  be submonoids of groups.  We remark that in the following, the monoids are submonoids of groups and have no inverses, and thus are categories of paths.

\begin{prop} \label{prop not one to one}

The maps $\pi^{(\cup)}$, $\rho^{(\cup)}$, $q$ and $\pi_s$ are not generally one-to-one even if $\CC$ is a finitely aligned submonoid of a group.

\end{prop}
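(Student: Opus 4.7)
The plan is to adapt the two $2$-graph counterexamples from the proof of Theorem~\ref{thm li algebras}~\eqref{thm li algebras 4} and \eqref{thm li algebras 5} by collapsing their vertex sets to a single point, thus producing finitely aligned monoid presentations whose enveloping groups contain them. For \eqref{thm li algebras 4}, I would consider the monoid $M_1$ with generators $a,b,c_1,c_2,d_1,d_2$ subject only to the relations $ac_i = bd_i$ for $i=1,2$, and view it inside the group $G_1$ given by the same presentation. For \eqref{thm li algebras 5}, I would use the analogous monoid $M_2$ on generators $a_1,a_2,b_1,b_2,c_1,\dots,c_n,d_1,\dots,d_n$ with relations $a_j c_i = b_j d_i$, again embedded in its enveloping group. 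The defining relations are length-preserving and balanced in the generators, so standard rewriting arguments (or a Garside-type normal form) give the embedding $M_i \hookrightarrow G_i$ and in particular show $M_i$ is cancellative with no invertible elements apart from~$1$.

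The next step is to verify finite alignment. For $M_1$ one checks that every element has a canonical decomposition of the form (word in $a,b$)(word in $c_i,d_i$), after which the principal right ideals $a M_1$ and $b M_1$ intersect precisely in $ac_1 M_1 \cup ac_2 M_1$, so $a \vee b = \{ac_1, ac_2\}$. All other needed intersections, including $c_1 M_1 \cap c_2 M_1 = \varnothing$, are immediate. The same analysis works for $M_2$ with the intersection $a_j M_2 \cap b_j M_2$ equal to the union of the $a_j c_i M_2$.

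With finite alignment in hand, I would transplant the Hilbert space constructions of Theorem~\ref{thm li algebras}~\eqref{thm li algebras 4} and \eqref{thm li algebras 5} to $M_1$ and $M_2$, replacing each edge by the corresponding generator and each vertex space by a single summand. The definitions of the projections $p_X$ on constructible right ideals, and the formulas for the partial isometries $V_\mu$ (including the nontrivial unitary twist $U$ on the summand $H_y'$ in the $M_2$ example), translate without essential change. As before, these representations satisfy \textup{(L1)--(L5)} but, respectively, fail \textup{(L6)} (giving the non-injectivity of $\pi^{(\cup)}$, $\rho^{(\cup)}$, and, via the isomorphisms of Theorem~\ref{thm li algebras}~\eqref{thm li algebras 1},\eqref{thm li algebras 2}, of $q$) and fail \textup{(L7)} (giving the non-injectivity of $\pi_s$).

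The main obstacle is verifying finite alignment in $M_1$ and $M_2$: one must show that no further ``hidden'' intersection relations appear in the monoid beyond those produced by the defining relations, which requires a solution to the word problem or an explicit normal form. Once this combinatorial bookkeeping is done, the analytic part of the proof follows the pattern already established in Theorem~\ref{thm li algebras} with only notational modifications.
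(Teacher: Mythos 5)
Your overall strategy --- collapse the vertices of the two $2$-graph counterexamples to obtain finitely aligned submonoids of groups, then transport the representations violating \textup{(L6)} and \textup{(L7)} --- is exactly the paper's strategy. But the step you dismiss as ``only notational modifications'' is where the real work lies, and as written your argument has a gap there. In the amalgamated monoid every pair of elements becomes composable, so relation \textup{(L2)} now demands $V_\mu V_\nu = V_{\mu\nu}\neq 0$ for products such as $\gamma_1\alpha$ (source of $\gamma_1$ and range of $\alpha$ are \emph{different} vertices of the original $2$-graph). The original operators satisfy $V_{\gamma_1}V_\alpha=0$, so simply ``replacing each vertex space by a single summand'' does not yield a representation of the monoid. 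The paper resolves this by an induction-type construction: it forms $\bigoplus_{\mu\in\widetilde{\Lambda},\,w\in\Lambda^0}H^{\mu,w}$ with $H^{\mu,w}$ a copy of $H_w$, defines $\widetilde{V}_\mu\xi^{\nu,w}$ by letting the original operators act only when the normal forms actually concatenate inside $\Lambda$ and by shifting the index $\nu$ otherwise, and then verifies $\widetilde{V}_\mu\widetilde{V}_\nu=\widetilde{V}_{\mu\nu}$ by a case analysis on normal forms. Some such device is unavoidable, and your proposal does not supply it.

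Two smaller but genuine problems. First, your claimed canonical decomposition of elements of $M_1$ as (word in $a,b$)(word in $c_i,d_i$) is false: $c_1a$ is a perfectly good element of the monoid and admits no such factorization, since the relations $ac_i=bd_i$ never let a $c$ or $d$ move past an $a$ or $b$. The correct normal form is the alternating one of \cite[Lemma~11.2]{Spi11}: a product $(\mu_1,\dots,\mu_m)$ of nonunit morphisms of $\Lambda$ with $s(\mu_i)\neq r(\mu_{i+1})$. Second, ``the relations are length-preserving and balanced, so standard rewriting arguments give the embedding into the group and cancellativity'' is not a proof; balanced length-preserving presentations do not in general yield group-embeddable (or even cancellative) monoids. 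The paper instead gets left cancellativity, the normal form, and preservation of finite alignment for free from the amalgamation machinery of \cite[Section~11]{Spi11} and \cite[Section~4]{jackpaths2}, and then exhibits $\widetilde{\Lambda}$ concretely as the submonoid of a free group generated by $\alpha,\beta,\gamma_1,\gamma_2,\beta^{-1}\alpha\gamma_1,\beta^{-1}\alpha\gamma_2$ (and analogously in the second example), the point being that the group on your presentation is free after eliminating the $d_i$. You correctly flag finite alignment as the main combinatorial obstacle, but you do not discharge it, and you miss that the representation-theoretic transfer is the other essential obstacle.
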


\begin{proof}
We will modify the examples used in Theorem \ref{thm li algebras} \eqref{thm li algebras 4} and \eqref{thm li algebras 5}.  We use the amalgamation procedure from \cite[Section 11]{Spi11} (all results except the last in that section are valid for arbitrary left cancellative small categories (\cite[Section 4]{jackpaths2}); however, these examples are 2-graphs, hence within the literal scope of the reference \cite{Spi11}).  We give a treatment that applies simultaneously to both of the examples.  Let $\Lambda$ be one of the 2-graphs appearing in the proofs of Theorem \ref{thm li algebras}\eqref{thm li algebras 4} and \eqref{thm li algebras 5}.  We will make use of the representations constructed there.   Let $\widetilde{\Lambda}$ be obtained by identifying all vertices of $\Lambda$.  We will let $\widetilde{u}$ denote the unique unit of $\widetilde{\Lambda}$.  By \cite[Lemma 11.2]{Spi11} each element of $\widetilde{\Lambda}$ (other than $\widetilde{u}$) has a unique {\it normal form} $(\mu_1, \ldots, \mu_m)$ characterized by the properties that $\mu_i \in \Lambda \setminus \Lambda^0$ and $s(\mu_i) \not= r(\mu_{i+1})$.  For $\mu \in \widetilde{\Lambda}$ and $w \in \Lambda^0$ let $H^{\mu,w}$ be a copy of $H_w$:  $H^{\mu,w} = \{ \xi^{\mu,w} : \xi \in H_w \}$.  For $\mu = (\mu_1, \ldots, \mu_m)$ (in normal form), $\nu \in \widetilde{\Lambda}$, $\xi \in H_w$, let
\[
\widetilde{V}_\mu \xi^{\nu,w}
= \begin{cases}
 (V_{\mu_n} \xi)^{(\mu_1, \ldots, \mu_{n-1}),r(\mu_n)}, &\text{ if } \nu = \widetilde{u},\ \mu \not= \widetilde{u},\text{ and } s(\mu_n) = w, \\
 \xi^{\mu\nu,w}, &\text{ otherwise.}
  \end{cases}
\]
We claim that $\widetilde{V}_\mu \widetilde{V}_\nu = \widetilde{V}_{\mu\nu}$.  We may as well assume that $\mu$, $\nu \not= \widetilde{u}$.  Let $\mu = (\mu_1,\ldots,\mu_m)$ and $\nu = (\nu_1,\ldots,\nu_n)$ in normal form.  Let $\eta \in \widetilde{\Lambda}$, $w \in \Lambda^0$, and $\xi \in H_w$.  We consider several cases.

\noindent
{\it Case (i)}. Suppose that $\eta = \widetilde{u}$ and $s(\nu_n) = w$.  Then 
\[\widetilde{V}_\nu \xi^{\eta,w} = (V_{\nu_n}\xi)^{(\nu_1,\ldots,\nu_{n-1}),r(\nu_n)}.\]  There are two subcases.  First, if $n=1$ and $s(\mu_m) = r(\nu_1)$ then
\[
\widetilde{V}_\mu \widetilde{V}_nu \xi^{\eta,w}
= \widetilde{V}_\mu (V_{\nu_n}\xi)^{\widetilde{u},r(\nu_n)}
= (V_{\mu_m} V_{\nu_n} \xi)^{(\mu_1,\ldots,\mu_{m-1}),r(\mu_m)}.
\]
On the other hand $\mu \nu = (\mu_1,\ldots,\mu_{m-1},\mu_m \nu_n)$, and hence
\[
\widetilde{V}_{\mu \nu} \xi^{\eta,w}
= (V_{\mu_m \nu_n} \xi)^{(\mu_1,\ldots,\mu_{m-1}),r(\mu_m)}
= (V_{\mu_m} V_{\nu_n} \xi)^{(\mu_1,\ldots,\mu_{m-1}),r(\mu_m)}
\]
since $V$ is a representation.  Second, if $n > 1$ or $s(\mu_m) \not= r(\nu_1)$, then
\[
\widetilde{V}_\mu \widetilde{V}_\nu \xi^{\eta,w}
=\widetilde{V}_\mu (V_{\nu_n} \xi)^{(\nu_1,\ldots,\nu_{n-1}),r(\nu_n)}
= (V_{\nu_n} \xi)^{\mu(\nu_1,\ldots,\nu_{n-1}),r(\nu_n)}.
\]
On the other hand, the normal form of $\mu \nu$ has $\nu_n$ as its last piece, hence
\[
\widetilde{V}_{\mu \nu} \xi^{\eta,w}
= (V_{\nu_n} \xi)^{\mu(\nu_1,\ldots,\nu_{n-1}),r(\nu_n)}.
\]
\noindent
{\it Case (ii)}. Suppose that $\eta \not= \widetilde{u}$ or $s(\nu_n) \not= w$.  Then $\widetilde{V}_\nu \xi^{\eta,w} = \xi^{\nu \eta,w}$.  Also $\nu \eta \not= \widetilde{u}$, so $\widetilde{V}_\mu \widetilde{V}_\nu \xi^{\eta,w} = \xi^{\mu \nu \eta,w}$.  On the other hand, writing $\mu \nu = (\gamma_1, \ldots, \gamma_k)$ in normal form, we have that $s(\gamma_k) = s(\nu_n)$, and hence that $\eta \not= \widetilde{u}$ or $s(\gamma_k) \not= w$.  Therefore $\widetilde{V}_{\mu \nu} \xi^{\eta,w} = \xi^{\mu \nu \eta,w}$.

Now it is straightforward to see that the amalgamated versions exhibit the same phenomena as the versions in the proof of Theorem \ref{thm li algebras}.  Since $\widetilde{\Lambda}$ has a single unit, it is a monoid.  In fact $\widetilde{\Lambda}$ is a submonoid of a (finitely generated) free group.  In the case of the example in the proof of Theorem \ref{thm li algebras}\eqref{thm li algebras 4}, let $G$ be the free group with generators $\alpha$, $\beta$, $\gamma_1$, $\gamma_2$.  Then $\widetilde{\Lambda}$ is the submonoid generated by the four generators together with $\beta^{-1}\alpha\gamma_1$ and $\beta^{-1}\alpha \gamma_2$.  In the case of the example in the proof of Theorem \ref{thm li algebras}\eqref{thm li algebras 5}, let $G$ be the free group with generators $\alpha_1$, $\alpha_2$, $\beta_1$, $\gamma_1$, $\ldots$, $\gamma_n$.  Then $\widetilde{\Lambda}$ is the submonoid generated by the $n+3$ generators together with $\alpha_2 \alpha_1^{-1} \beta_1$, $\beta_1^{-1}\alpha_1\gamma_1$, $\ldots$, $\beta_1^{-1}\alpha_1 \gamma_n$.
\end{proof}

\begin{rem}

The example used in Theorem \ref{thm li algebras} \eqref{thm li algebras 5} can be enlarged by letting $n = \infty$ to give a nonfinitely aligned 2-graph.  An analogous argument to the one given there shows that $\rho_s$ is not generally one-to-one if $\CC$ is not finitely aligned.  Moreover, an argument analogous to the one in Proposition \ref{prop not one to one} shows that there is a nonfinitely aligned submonoid of a group for which $\rho_s$ is not one-to-one.

\end{rem}
 
\begin{rem}
When $\mathcal{C}$ is a left cancellative monoid, a certain quotient $C^*$-algebra $\mathcal{Q}(\mathcal{C})$ of $C^*_{\rm Li}(\mathcal{C})$, called the {\em boundary quotient} of $C^*_{{\rm Li}}(\mathcal{C})$, is introduced in \cite[Remark 5.5]{BRRW}. A similar quotient may be defined for a left cancellative small category $\CC$.
Recall from the beginning of Subsection \ref{subsec li algebras} that every set $A(\zeta) \in \mathcal{J}(\CC)$ has a range in $\CC^0$: $r(A(\zeta)) = s(\zeta)$.  We will write $\mathcal{J}(\CC)v = \{ E \in \mathcal{J}(\CC) : r(E) = v \}$.  For $v \in \CC^0$,
let us say that a finite nonempty subset $ F $ of $ \mathcal{J}(\CC)v$ is a {\em foundation set} if for each $Y \in  \mathcal{J}(\CC)v$
 there exists $X \in F$ with $ X \cap Y  \neq \varnothing$. 
(This coincides with the notion of {\em boundary cover} from \cite[Definition 10.6]{jackpaths2}.)
 We may then define the boundary quotient $\mathcal{Q}(\CC)$ of $C^*_{{\rm Li}}(\mathcal{C})$ as the universal $C^*$-algebra generated by partial isometries $\{v_\alpha\}_{\alpha \in \CC}$ and projections $\{e_X\}_{ X \in\mathcal{J}(\CC)}$ satisfying relations (L1) -- (L5)  and also 
\[\prod_{X\in F}(1_v-e_X)=0 \, \,  \text{   for all foundation sets  } F\, \subseteq \mathcal{J}(\CC),
\]
where $1_v := e_{v\CC}$. (Note that the above condition is equivalent to $1_v = \bigvee_{X \in F} e_X$, i.e.~to condition (5) of \cite[Definition 10.9]{jackpaths2}.)

 An interesting problem is whether $\mathcal{Q}(\CC)$ is isomorphic to $\OO(\CC)$ when $\CC$ is finitely aligned (or even singly aligned).
A result pointing towards a positive answer is provided by \cite[Theorem 3.7]{starling15}, where Starling shows that when $\CC$ is a singly aligned left cancellative monoid (i.e., is a right LCM in the terminology used in \cite{starling15}), then $\mathcal{Q}(\CC)$ is isomorphic to the tight $C^*$-algebra of the left inverse hull of $\CC$, hence to $\OO(\CC)$ by Corollary \ref{C-J}. 
\end{rem}

\begin{rem} Given a finitely aligned left cancellative small category $\mathcal{C}$, there are many other natural questions to investigate in the future. We mention a few here. What can be said about the nuclearity of any of the
$C^*$-algebras associated to $\mathcal{C}$?  What kind of conditions will ensure that (some of) the canonical maps between the $C^*$-algebras  associated to $\mathcal{C}$ are isomorphisms? When is $\mathcal{O}(\mathcal{C})$ simple? When is it simple and purely infinite? 
 For any of these questions, an answer valid only in the singly aligned case would already be interesting. 
\end{rem}


\bibliographystyle{amsplain}

\begin{thebibliography}{90}

\bibitem{AD:weakcont}
C.~Anantharaman-Delaroche, \emph{Some remarks about the weak containment
  property for groupoids and semigroups},  (2016), preprint, arXiv:1604.01724
  [math.OA].

\bibitem{bkqexelpardo}
E.~B\'edos, S.~Kaliszewski, and J.~Quigg, \emph{{On Exel-Pardo algebras}}, J.
  Operator Theory \textbf{78} (2017), no.~2, 309--345.

\bibitem{Brin}
M.~G. Brin, \emph{{On the Zappa-Sz{\'e}p product}}, Comm. Alg. \textbf{33}
  (2005), 393--424.

\bibitem{BLS:rightLCM2}
N.~Brownlowe, N.~S. Larsen, and N.~Stammeier, \emph{{$C^*$}-algebras of
  algebraic dynamical systems and right {LCM} semigroups},
  Indiana Univ. Math. J. (to appear).
    
\bibitem{BLS:rightLCM}
\bysame, \emph{On {$C^*$}-algebras associated to right {LCM} semigroups},
  Trans. Amer. Math. Soc. \textbf{369} (2017), no.~1, 31--68.

\bibitem{BPRRW}
N.~Brownlowe, D.~Pask, J.~Ramagge, D.~Robertson, and M.~F. Whittaker,
  \emph{Zappa--{S}z\'ep product groupoids and {$C^*$}-blends}, Semigroup Forum
  \textbf{94} (2017), no.~3, 500--519.

\bibitem{BRRW}
N.~Brownlowe, J.~Ramagge, D.~Robertson, and M.~F. Whittaker,
  \emph{Zappa-{S}z\'ep products of semigroups and their {$C^\ast$}-algebras},
  J. Funct. Anal. \textbf{266} (2014), no.~6, 3937--3967.

\bibitem{BS:bdary}
N.~Brownlowe and N.~Stammeier, \emph{The boundary quotient for algebraic
  dynamical systems}, J. Math. Anal. Appl. \textbf{438} (2016), no.~2,
  772--789.

\bibitem{DonMil}
A.~P. Donsig and D.~Milan, \emph{Joins and covers in inverse semigroups and
  tight {$C^{\ast}$}-algebras}, Bull. Aust. Math. Soc. \textbf{90} (2014),
  no.~1, 121--133.

\bibitem{exelcomb}
R.~Exel, \emph{Inverse semigroups and combinatorial {$C^\ast$}-algebras}, Bull.
  Braz. Math. Soc. (N.S.) \textbf{39} (2008), no.~2, 191--313.

\bibitem{ExeSemigroupoid}
\bysame, \emph{Semigroupoid {$C^\ast$}-algebras}, J. Math. Anal. Appl.
  \textbf{377} (2011), no.~1, 303--318.

\bibitem{exelblend}
\bysame, \emph{Blends and alloys}, C. R. Math. Acad. Sci. Soc. R. Can.
  \textbf{35} (2013), no.~3, 77--113.

\bibitem{EP}
R.~Exel and E.~Pardo, \emph{{Self-similar graphs, a unified treatment of
  Katsura and Nekrashevych C*-algebras}}, Adv. Math. \textbf{306} (2017),
  1046--1129.

\bibitem{FPS}
C.~Farthing, D.~Pask, and A.~Sims, \emph{Crossed products of {$k$}-graph
  {$C^\ast$}-algebras by {$\mathbb Z^l$}}, Houston J. Math. \textbf{35} (2009),
  no.~3, 903--933.

\bibitem{HaoNg}
G.~Hao and C.-K. Ng, \emph{Crossed products of {$C^*$}-correspondences by
  amenable group actions}, J. Math. Anal. Appl. \textbf{345} (2008), no.~2,
  702--707.

\bibitem{Katsou}
E.~G. Katsoulis, \emph{{$C^*$}-envelopes and the {H}ao-{N}g isomorphism for
  discrete groups}, Int. Math. Res. Not. IMRN (2017), no.~18, 5751--5768.

\bibitem{katsuracorrespondence}
T.~Katsura, \emph{On {$C^*$}-algebras associated with {$C^*$}-correspondences},
  J. Funct. Anal. \textbf{217} (2004), no.~2, 366--401.

\bibitem{KatKirchberg}
\bysame, \emph{A construction of actions on {K}irchberg algebras which induce
  given actions on their {$K$}-groups}, J. Reine Angew. Math. \textbf{617}
  (2008), 27--65.

\bibitem{LRRW2}
M.~Laca, I.~Raeburn, J.~Ramagge, and M.~F. Whittaker, \emph{Equilibrium states
  on operator algebras associated to self-similar actions of groupoids on
  graphs},
  Adv. Math. \textbf{331} (2018), 268--315.  

\bibitem{lawsonbook}
M.~V. Lawson, \emph{Inverse semigroups. The theory of partial symmetries}, World Scientific Publishing Co., Inc.,
  River Edge, NJ, 1998. 
  
  \bibitem{li-yang}
  H.~Li and D.~Yang, \emph{Self-similar $k$-graph {$C^*$}-algebras}, (2017), preprint, 
  arXiv:1712.08194 [math.OA].

\bibitem{LiSemigroup}
X.~Li, \emph{Semigroup {${\rm C}^*$}-algebras and amenability of semigroups},
  J. Funct. Anal. \textbf{262} (2012), no.~10, 4302--4340.

\bibitem{Li:nuclear}
\bysame, \emph{Nuclearity of semigroup {$C^*$}-algebras and the connection to
  amenability}, Adv. Math. \textbf{244} (2013), 626--662.

\bibitem{norling}
M.~D. Norling, \emph{Inverse semigroup {$C^*$}-algebras associated with left
  cancellative semigroups}, Proc. Edinb. Math. Soc. (2) \textbf{57} (2014),
  no.~2, 533--564.

\bibitem{pat:book}
A.~L.~T. Paterson, \emph{{Groupoids, inverse semgroups, and their operator
  algebras}}, Birkh{\"a}user, 1999.

\bibitem{rae:full}
I.~Raeburn, \emph{{A duality theorem for crossed products by nonabelian
  groups}}, Proc. Centre Math. Appl. Austral. Nat. Univ. \textbf{15} (1987),
  214--227.

\bibitem{raeburngraph}
\bysame, \emph{Graph algebras}, CBMS Regional Conference Series in Mathematics,
  vol. 103, Published for the Conference Board of the Mathematical Sciences,
  Washington, DC, 2005.

\bibitem{raesims}
I.~Raeburn and A.~Sims, \emph{Product systems of graphs and the {T}oeplitz
  algebras of higher-rank graphs}, J. Operator Theory \textbf{53} (2005),
  no.~2, 399--429.

\bibitem{raesimsyee}
I.~Raeburn, A.~Sims, and T.~Yeend, \emph{The {$C^*$}-algebras of finitely
  aligned higher-rank graphs}, J. Funct. Anal. \textbf{213} (2004), no.~1,
  206--240.

\bibitem{simscouniversal}
A.~Sims, \emph{The co-universal {$C^\ast$}-algebra of a row-finite graph}, New
  York J. Math. \textbf{16} (2010), 507--524.

\bibitem{Spi11}
J.~Spielberg,
 \emph{Groupoids and {$C^*$}-algebras for categories of paths}, Trans.
  Amer. Math. Soc. \textbf{366} (2014), no.~11, 5771--5819.

\bibitem{jackpaths2}
\bysame, \emph{Groupoids and  {$C^*$}-algebras for left cancellative small
  categories}, (2017), preprint, arXiv:1712.07720 [math.OA].
  
\bibitem{starling15}
C.~Starling, \emph{Boundary quotients of {$\rm C^*$}-algebras of right {LCM}
  semigroups}, J. Funct. Anal. \textbf{268} (2015), no.~11, 3326--3356.

\end{thebibliography}

\end{document}